%560 
\documentclass[11pt]{article}

\usepackage{amsmath,amssymb}
\usepackage{latexsym}
\usepackage{graphicx}
\usepackage{bm}

\setlength{\topmargin}{-0.55cm}
\setlength{\textwidth}{15cm}
\setlength{\evensidemargin}{-0.15cm}
\setlength{\oddsidemargin}{0.5cm}
\setlength{\textheight}{21.5cm}

\newtheorem{thm}{Theorem}%[section]
\newtheorem{cor}[thm]{Corollary}

\newtheorem{lem}[thm]{Lemma}

\newtheorem{rem}{Remark}

%  \makeatletter
%  \@addtoreset{equation}{section}
%  \makeatother
%  \renewcommand{\theequation}{\thesection.\arabic{equation}}

\newenvironment{proof}{\begin{trivlist}
                       \item[]{\bf Proof.}
                       \hspace{0cm}}{\hfill $\Box$
                       \end{trivlist}}

\def\Re{\mathop{\rm Re}}

\begin{document}
\title{The Dynamical Systems Method for solving nonlinear equations with monotone operators}

\author{ N. S. Hoang$\dag$\footnotemark[1]\, and\,  
A. G. Ramm$\dag$\footnotemark[3]
\\
\\
$\dag$Mathematics Department, Kansas State University,\\
Manhattan, KS 66506-2602, USA
}

\renewcommand{\thefootnote}{\fnsymbol{footnote}}
\footnotetext[3]{Corresponding author. Email: ramm@math.ksu.edu}
\footnotetext[1]{Email: nguyenhs@math.ksu.edu}

\date{}
\maketitle

\begin{abstract} \noindent A review of the authors's results is given.  
Several methods are discussed for solving nonlinear equations $F(u)=f$, 
where $F$ is a
monotone operator in a Hilbert space, and noisy data are given in place
of the exact data.
A discrepancy
principle for solving the equation is formulated and justified.  Various
versions of the Dynamical Systems Method (DSM) for solving the equation 
are
formulated.  These methods consist of a regularized Newton-type method, a
gradient-type method, and a simple iteration method.  A priori and a
posteriori choices of stopping rules for these methods are proposed and
justified. Convergence of the solutions, obtained by these methods, to
the minimal norm solution to the equation $F(u)=f$ is proved. Iterative
schemes with a posteriori choices of stopping rule corresponding to the
proposed DSM are formulated. Convergence of these iterative schemes to a
solution to equation $F(u)=f$ is justified.  
New nonlinear differential inequalities are derived and applied
to a study of large-time behavior of solutions to evolution equations.
Discrete versions of these inequalities are established.

{\bf MSC:} 47H05, 47J05, 47N20, 65J20, 65M30.

{\bf Keywords.}
ill-posed problems, nonlinear operator equations, monotone
operators, nonlinear inequalities.
\end{abstract}

\section{Introduction}

Consider equation
\begin{equation}
\label{eq1}
F(u) = f,
\end{equation}
where $F$ is an operator in a Hilbert space $H$.
Throughout this paper we assume that $F$ is a monotone continuous operator.
Monotonicity is understood as follows:
\begin{equation}
\label{eq2}
\langle F(u) - F(v), u-v\rangle \ge 0,\qquad \forall u,v\in H.
\end{equation}
We assume that equation \eqref{eq1} has a solution, possibly non-unique. 
Assume that $f$ is not known but $f_\delta$, the "noisy data",  
$\|f_\delta-f\|\le \delta$, are known. 

There are many practically important problems which are ill-posed in the 
sense of J.Hadamard. 
Problem \eqref{eq1} is well-posed in the sense of Hadamard if and only 
if(=iff) $F$ is injective, surjective, and the inverse operator
$F^{-1}$ is continuous. 
To solve ill-posed problem \eqref{eq1}, one has to use regularization 
methods rather than
the classical Newton's or Newton-Kantorovich's methods. 
Regularization methods for stable solution of linear ill-posed problems 
have been studied 
extensively (see \cite{M}, \cite{R499}, \cite{VA} and references therein). 
Among regularization methods, the Variational Regularization (VR) 
is one of the frequently used methods. 
When $F=A$, where $A$ is a linear operator, the VR method consists 
of minimizing the following functional:
\begin{equation}
\label{eq4}
\|Au-f_\delta\|^2 + \alpha \|u\|^2\to \min.
\end{equation}
The minimizer $u_{\delta,a}$ of problem \eqref{eq4} can be found from the Euler equation:
\begin{equation*}
(A^*A+\alpha I)u_{\delta,\alpha} = A^*f_\delta.
\end{equation*}
In the VR method the choice of the regularization parameter $\alpha$ is important.
Various choices of the regularization parameter have been proposed and 
justified.
Among these, the discrepancy principle (DP) appears to be
the most efficient in practice (see \cite{M}).
According to the DP one chooses $\alpha$ as the solution to the following 
equation:
\begin{equation}
\label{eq3}
\|A u_{\delta,\alpha} - f_\delta\| = C\delta,\qquad 1<C=const.
\end{equation}

When the operator $F$ is nonlinear, the theory is less complete
(see \cite{A}, \cite{TLY}). 
In this case, one may try to minimize the functional
\begin{equation}
\label{eq5}
\|F(u)-f_\delta\|^2 + \alpha \|u\|^2\to \min.
\end{equation}
as in the case of linear operator F. The 
minimizer to problem \eqref{eq5} solves the following 
Euler equation
\begin{equation}
\label{eq*}
F'(u_{\delta,\alpha})^*F(u_{\delta,\alpha})+\alpha u_{\delta,\alpha} = 
F'(u_{\delta,\alpha})^*f_\delta.
\end{equation}
However, there are several principal difficulties in nonlinear
problems: there are no general results concerning 
the  solvability of \eqref{eq*}, and the notion 
of minimal-norm solution does not make sense, in general, when $F$ is
nonlinear. 
Other methods for solving \eqref{eq1} with nonlinear $F$ have been 
studied. Convergence proofs of these methods often rely on the
source-type assumptions. These assumptions are difficult to verify 
in practice and they may not hold. 

Equation  \eqref{eq1} with a monotone operator $F$ is 
of interest and importance in many applications. 
Every solvable linear operator equation $Au=f$ can be reduced
to solving operator equation with a monotone operator $A^*A$.
For equations with a bounded operator $A$ this is a simple fact,
and for unbounded, closed, densely defined linear operators $A$
it is proved in \cite{R500}, \cite{R504}, \cite{R522}, \cite{R499}.

Physical problems with dissipation of energy often
can be reduced to solving equations with monotone operators \cite{R118}.
For simplicity we present the results for equations in
Hilbert space, but some results can be generalized to the
operators in Banach spaces.

When $F$ is monotone then the notion minimal-norm solution makes sense 
(see, e.g., \cite[p. 110]{R499}). 
In \cite{T}, Tautenhahn
studied a discrepancy principle for solving equation \eqref{eq1}. 
The discrepancy principle 
in \cite{T} requires solving for $\alpha$ the following equation:
\begin{equation}
\label{eq6}
\|(F'(u_{\delta,\alpha})+\alpha I)^{-1}(F(u_{\delta,\alpha})-f_\delta)\|=C\delta,\qquad 1<C=const, 
\end{equation}
where $u_{\delta,\alpha}$ solves the equation:
$$
F(u_{\delta,\alpha}) + \alpha u_{\delta,\alpha} = f_\delta.
$$
For this discrepancy principle optimal rate of convergence is obtained 
in \cite{T}. 
However, the convergence of the method is justified under 
source-type assumptions and other restrictive assumptions. 
These assumptions often do not hold and some of them cannot
be verified, in general. In addition, equation \eqref{eq6} is difficult 
to solve numerically. 

A continuous analog of the Newton method for solving well-posed operator equations
was proposed in \cite{G}, in 1958.
In \cite{R401}, \cite{R538}--\cite{R529}, and in the monograph \cite{R499} 
the Dynamical Systems Method for solving operator equations is studied
systematically. 
The DSM consists of finding a nonlinear map $\Phi(t,u)$ such that the 
Cauchy problem
\begin{equation}
\label{eq7}
\dot{u}=\Phi(t,u),\qquad u(0)=u_0,
\end{equation}
has a unique solution for all $t\ge0$, there exists $\lim_{t\to\infty}u(t):=u(\infty)$,
and $F(u(\infty))=f$,
\begin{equation}
\label{eq8}
\exists !\, \,u(t)\quad \forall t\ge 0;\qquad \exists u(\infty);\qquad F(u(\infty))=f.
\end{equation}
Various choices of $\Phi$ were proposed in \cite{R499} for \eqref{eq8} to hold.
Each such choice yields a version of the DSM.

In this paper, several methods developed by the authors for solving 
stably equation \eqref{eq1} with a monotone operator $F$ in a Hilbert 
space and noisy data $f_\delta$, given  in place of the exact data $f$, 
are 
presented. A discrepancy principle (DP) is formulated for solving stably 
equation \eqref{eq1} is formulated and justified. In this DP the only 
assumptions on $F$ are the continuity and monotonicity. Thus, our result 
is quite general and can be applied for a wide range of problems. 
Several versions of the Dynamical Systems Method (DSM) for solving 
equation \eqref{eq1} are formulated. These versions of the DSM are 
Newton-type method, 
gradient-type method and a simple iterations method. A priori and a 
posteriori choices of stopping rules for several versions of the DSM and 
for the corresponding iterative 
schemes are proposed and justified. Convergence of the solutions of 
these versions of the DSM to the minimal-norm solution to the equation 
$F(u)=f$ is
proved. Iterative schemes, corresponding to the proposed versions of the 
DSM, are 
formulated. Convergence of these iterative schemes to a solution to 
equation $F(u)=f$ is established. When one uses these iterative schemes 
one does not have to solve a nonlinear equation for the regularization 
parameter. The stopping time is chosen automatically in the course of 
calculations. Implementation of these methods is illustrated in Section 6 
by a numerical experiment. In Sections 2 and 3 basic and auxiliary results 
are formulated, in Section 4 proofs are given, in Section 5 ideas of
application of the basic nonlinear inequality (94) are outlined.

\section{Basic results}

\subsection{A discrepancy principle}

Let us consider the following equation
\begin{equation}
\label{eq9}
F(V_{\delta,a})+aV_{\delta,a}-f_\delta = 0,\qquad a>0,
\end{equation}
where $a=const$. It is known (see, e.g., \cite[p.111]{R499}) 
that equation \eqref{eq9} with a monotone continuous operator $F$ has 
a unique solution for any $f_\delta\in H$. 

Assume that equation \eqref{eq1} has a solution. 
It is known that the set of solution $\mathcal{N}:=\{u:F(u)=f\}$ 
is convex and closed if $F$ is monotone 
and continuous (see, e.g., \cite{R499}, p.110). A closed and convex set 
$\mathcal{N}$ in $H$ has a unique minimal-norm element.
This minimal-norm solution to \eqref{eq1} is denoted by $y$.

\begin{thm}
\label{thm1}
Let $\gamma\in (0,1]$ and $C>0$ be some constants such that $C\delta^\gamma>\delta$.
Assume that 
$\|F(0)-f_\delta\|>C\delta^\gamma$. Let $y$ be the minimal-norm solution to equation \eqref{eq1}.
Then there exists a unique $a(\delta)>0$ such that
\begin{equation}
\label{eq10}
\|F(V_{\delta,a(\delta)}) - f_\delta \| = C\delta^\gamma,
\end{equation}
where $V_{\delta, a(\delta)}$ solves \eqref{eq3} with $a = a(\delta)$.

If $0< \gamma <1$ then
\begin{equation}
\label{eq11}
\lim_{\delta\to 0} \|V_{\delta, a(\delta)}-y\| = 0.
\end{equation}
\end{thm}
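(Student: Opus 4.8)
The plan is to establish \eqref{eq10} first by an intermediate-value/monotonicity argument for the scalar function $\phi(a) := \|F(V_{\delta,a}) - f_\delta\|$, and then to prove the convergence \eqref{eq11} by a compactness-free estimate that uses monotonicity of $F$ together with the defining property of the minimal-norm solution $y$. For the existence and uniqueness of $a(\delta)$, the first step is to record the basic identity coming from \eqref{eq9}: since $F(V_{\delta,a}) - f_\delta = -a V_{\delta,a}$, we have $\phi(a) = a\|V_{\delta,a}\|$. One then shows that $a \mapsto V_{\delta,a}$ is continuous (this follows from monotonicity of $F$ and the standard resolvent-type estimate $\|V_{\delta,a} - V_{\delta,b}\| \le \frac{|a-b|}{\min(a,b)}\|V_{\delta,b}\|$, obtained by subtracting the two instances of \eqref{eq9} and pairing with $V_{\delta,a} - V_{\delta,b}$), hence $\phi$ is continuous on $(0,\infty)$. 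The boundary behavior is the key: as $a \to \infty$, pairing \eqref{eq9} with $V_{\delta,a}$ and using $\langle F(V_{\delta,a}) - F(0), V_{\delta,a}\rangle \ge 0$ gives $\|V_{\delta,a}\| \le \frac{\|f_\delta - F(0)\|}{a} \to 0$, and moreover $V_{\delta,a}\to 0$, so $\phi(a) \to \|F(0) - f_\delta\| > C\delta^\gamma$ by hypothesis; as $a \to 0$, one shows $V_{\delta,a}$ stays bounded and $F(V_{\delta,a}) \to f_\delta$ (using that \eqref{eq1} has a solution, so the equation $F(u) = f_\delta$ is "almost" solvable up to the noise level), whence $\limsup_{a\to 0}\phi(a) \le \delta < C\delta^\gamma$. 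Continuity then yields a point $a(\delta)$ with $\phi(a(\delta)) = C\delta^\gamma$.

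For uniqueness I would show that $\phi$ is monotone, or at least that the set $\{a : \phi(a) = C\delta^\gamma\}$ is a single point. The cleanest route is to prove that $a \mapsto \|V_{\delta,a}\|$ is nonincreasing and $a \mapsto \|F(V_{\delta,a}) - f_\delta\| = a\|V_{\delta,a}\|$ is nondecreasing; both follow from the monotonicity trick applied to the difference of \eqref{eq9} at parameters $a$ and $b$: pairing with $V_{\delta,a} - V_{\delta,b}$ and with $a V_{\delta,b} - b V_{\delta,a}$ gives, after using \eqref{eq2}, the inequalities $\langle V_{\delta,a} - V_{\delta,b}, a V_{\delta,b} - b V_{\delta,a}\rangle \ge 0$ for $a > b$, which forces $\|V_{\delta,a}\| \le \|V_{\delta,b}\|$. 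Since $\phi(a) = a\|V_{\delta,a}\|$ is then strictly increasing wherever $\|V_{\delta,a}\| > 0$ (and it is positive throughout, by the assumption $\|F(0)-f_\delta\| > C\delta^\gamma > 0$), the solution $a(\delta)$ is unique.

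For \eqref{eq11}, assuming now $0 < \gamma < 1$, the plan is standard for monotone regularization. From \eqref{eq9} with $a = a(\delta)$, pair $F(V_{\delta,a}) + a V_{\delta,a} - f_\delta = 0$ against $V_{\delta,a} - y$ and use $\langle F(V_{\delta,a}) - F(y), V_{\delta,a} - y\rangle \ge 0$ together with $\|F(y) - f_\delta\| \le \delta$ to get $a(\delta)\langle V_{\delta,a}, V_{\delta,a} - y\rangle \le \delta \|V_{\delta,a} - y\|$, hence $\|V_{\delta,a}\| \le \|y\| + \frac{\delta}{a(\delta)}$; combined with the discrepancy identity $a(\delta)\|V_{\delta,a}\| = C\delta^\gamma$ this yields $\|V_{\delta,a}\| \le \|y\| + \frac{\delta^{1-\gamma}}{C}\|V_{\delta,a}\|$, so for $\delta$ small $\|V_{\delta,a}\|$ is bounded, indeed $\limsup_{\delta\to 0}\|V_{\delta,a}\| \le \|y\|$. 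Since bounded sets in a Hilbert space are weakly precompact, along a subsequence $V_{\delta,a} \rightharpoonup v$; the discrepancy relation $\|F(V_{\delta,a}) - f_\delta\| = C\delta^\gamma \to 0$ forces $F(V_{\delta,a}) \to f$, and monotonicity/hemicontinuity of $F$ (Minty's trick: for any $z$, $\langle F(z) - f_\delta, V_{\delta,a} - z\rangle \le \langle F(z) - F(V_{\delta,a}), V_{\delta,a}-z\rangle + \langle F(V_{\delta,a}) - f_\delta, V_{\delta,a}-z\rangle$, and pass to the limit) gives $F(v) = f$, i.e. $v \in \mathcal{N}$. Then weak lower semicontinuity of the norm gives $\|v\| \le \liminf \|V_{\delta,a}\| \le \limsup \|V_{\delta,a}\| \le \|y\|$, so by uniqueness of the minimal-norm element $v = y$ and $\|V_{\delta,a}\| \to \|y\|$; weak convergence plus norm convergence upgrades to strong convergence, and since the limit is the same $y$ for every subsequence, the full limit \eqref{eq11} follows. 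The main obstacle is the lower bound controlling $a(\delta)$: one must rule out $a(\delta)$ staying bounded below, since the estimate $\|V_{\delta,a}\| \le \|y\| + \delta/a(\delta)$ is only useful if $a(\delta)$ does not decay too fast — in fact the crucial point is that $\delta/a(\delta) = \delta^{1-\gamma}\|V_{\delta,a}\|/C \to 0$ precisely because $\gamma < 1$ and $\|V_{\delta,a}\|$ is bounded, which is exactly why \eqref{eq11} can fail at $\gamma = 1$; getting this self-improving bound to close correctly is the delicate step.
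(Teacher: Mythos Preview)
Your argument for existence and uniqueness of $a(\delta)$ matches the paper's (which packages the strict monotonicity of $\phi(a)=a\|V_{\delta,a}\|$ and the opposite monotonicity of $\psi(a)=\|V_{\delta,a}\|$ as Lemma~\ref{lemma17}, and the boundary behavior as Lemmas~\ref{lem0} and~\ref{lemma19}). One small inaccuracy: as $a\to 0$ the solution $V_{\delta,a}$ need not stay bounded, nor need $F(V_{\delta,a})\to f_\delta$; what you actually need, and what follows from pairing \eqref{eq9} with $V_{\delta,a}-y$, is simply $a\|V_{\delta,a}\|\le a\|y\|+\delta$, which gives $\limsup_{a\to 0}\phi(a)\le\delta<C\delta^\gamma$ directly.

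For the convergence \eqref{eq11} your route genuinely differs from the paper's. The paper introduces the noiseless regularized solution $V_a$ (solving $F(V_a)+aV_a=f$), splits
\[
\|V_{\delta,a(\delta)}-y\|\le\|V_{\delta,a(\delta)}-V_{a(\delta)}\|+\|V_{a(\delta)}-y\|,
\]
bounds the first term by $\delta/a(\delta)$ via monotonicity (Lemma~\ref{lemma24}), and handles the second by first proving $a(\delta)\to 0$ (using that $a\mapsto a\|V_a\|$ is strictly increasing on the noiseless problem) and then invoking the black-box Lemma~\ref{lemma16}, i.e.\ $V_a\to y$ as $a\to 0$. Your weak-compactness-plus-Minty argument bypasses the auxiliary $V_a$ entirely and is self-contained; in effect you are reproving Lemma~\ref{lemma16} inline for the perturbed problem. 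The paper's approach is shorter given its lemma toolkit and makes the two mechanisms $a(\delta)\to 0$ and $\delta/a(\delta)\to 0$ visible as separate steps; your approach is more elementary in that it avoids citing the convergence of Lavrentiev approximations as a known fact. Both hinge on the same reason $\gamma<1$ is needed: $\delta/a(\delta)=\delta^{1-\gamma}\|V_{\delta,a(\delta)}\|/C\to 0$ once $\|V_{\delta,a(\delta)}\|$ is shown to be bounded.
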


Instead of using \eqref{eq9}, one may use the following equation:
\begin{equation}
\label{eq20}
F(V_{\delta,a})+a(V_{\delta,a}-\bar{u})-f_\delta = 0,\qquad a>0,
\end{equation}
where $\bar{u}$ is an element of $H$. Denote $F_1(u):=F(u+\bar{u})$.
 Then $F_1$ is monotone and continuous. 
Equation \eqref{eq20} can be written as:
\begin{equation}
\label{eq21}
F_1(U_{\delta,a})+aU_{\delta,a}-f_\delta = 0,
\qquad U_{\delta,a}:=V_{\delta,a}-\bar{u},\quad a>0.
\end{equation}
Applying Theorem~\ref{thm1} with $F=F_1$ one gets the following result:

\begin{cor}
Let $\gamma\in (0,1]$ and $C>0$ be some constants such that $C\delta^\gamma>\delta$.
Let $\bar{u}\in H$ and $z$ be the solution to \eqref{eq1} with 
minimal distance to $\bar{u}$.
Assume that 
$\|F(\bar{u})-f_\delta\|>C\delta^\gamma$. 
Then there exists a unique $a(\delta)>0$ such that
\begin{equation}
\label{eq22}
\|F(\tilde{V}_{\delta,a(\delta)}) - f_\delta \| = C\delta^\gamma,
\end{equation}
where $\tilde{V}_{\delta, a(\delta)}$ solves the following equation:
\begin{equation}
\label{eq23}
F(\tilde{V}_{\delta,a})+a(\delta)(\tilde{V}_{\delta,a}-\bar{u})-f_\delta = 0.
\end{equation}

If $\gamma \in (0,1)$ then this $a(\delta)$ satisfies
\begin{equation}
\label{eq24}
\lim_{\delta\to 0} \|\tilde{V}_{\delta, a(\delta)}-z\| = 0.
\end{equation}
\end{cor}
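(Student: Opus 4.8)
The plan is to reduce the Corollary to Theorem~\ref{thm1} by the substitution already indicated in the text, namely $F_1(u):=F(u+\bar u)$ and $U_{\delta,a}:=\tilde V_{\delta,a}-\bar u$. First I would verify that $F_1$ inherits the two structural hypotheses needed: continuity is immediate since $F_1$ is the composition of $F$ with a translation, and monotonicity follows from $\langle F_1(u)-F_1(v),u-v\rangle=\langle F(u+\bar u)-F(v+\bar u),(u+\bar u)-(v+\bar u)\rangle\ge 0$. Thus $F_1$ is a monotone continuous operator on $H$, equation \eqref{eq20} is precisely equation \eqref{eq9} written for $F_1$, i.e. \eqref{eq21}, and in particular \eqref{eq21} has a unique solution $U_{\delta,a}$ for every $a>0$ by the cited fact (\cite[p.111]{R499}).

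Next I would translate the solution sets. Since $F_1(u)=f\iff F(u+\bar u)=f\iff u+\bar u\in\mathcal N$, the solution set of $F_1(u)=f$ equals $\mathcal N-\bar u$, which is again closed and convex, hence has a unique minimal-norm element $u^\ast$. Minimizing $\|u\|$ over $u\in\mathcal N-\bar u$ is the same as minimizing $\|w-\bar u\|$ over $w\in\mathcal N$, so $u^\ast=z-\bar u$, where $z$ is the point of $\mathcal N$ nearest to $\bar u$ --- precisely the $z$ in the statement. Moreover the hypothesis $\|F(\bar u)-f_\delta\|>C\delta^\gamma$ reads $\|F_1(0)-f_\delta\|>C\delta^\gamma$, which is exactly the standing assumption of Theorem~\ref{thm1} for the operator $F_1$.

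Then I would apply Theorem~\ref{thm1} verbatim with $F$ replaced by $F_1$ and $y$ replaced by $u^\ast=z-\bar u$. It yields a unique $a(\delta)>0$ with $\|F_1(U_{\delta,a(\delta)})-f_\delta\|=C\delta^\gamma$; since $F_1(U_{\delta,a(\delta)})=F(U_{\delta,a(\delta)}+\bar u)=F(\tilde V_{\delta,a(\delta)})$, this is \eqref{eq22}, and \eqref{eq23} is merely \eqref{eq21} rewritten in terms of $\tilde V_{\delta,a}$. For $\gamma\in(0,1)$ the theorem further gives $\lim_{\delta\to 0}\|U_{\delta,a(\delta)}-(z-\bar u)\|=0$, and because $U_{\delta,a(\delta)}-(z-\bar u)=\tilde V_{\delta,a(\delta)}-z$, this is exactly \eqref{eq24}.

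There is no genuinely hard step here: the only point requiring care is the identification of the minimal-norm solution of the translated equation with $z-\bar u$, $z$ being the solution of \eqref{eq1} closest to $\bar u$; once that correspondence is fixed, the rest is a mechanical substitution into the conclusions of Theorem~\ref{thm1}. I would also remark that this $\bar u$-version is the one useful in practice, since choosing $\bar u$ near an expected solution makes $\|z-\bar u\|$ small and thereby tends to improve the approximation, at no cost to the theory.
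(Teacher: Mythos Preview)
Your proposal is correct and follows essentially the same approach as the paper: the paper simply introduces $F_1(u):=F(u+\bar u)$, notes it is monotone and continuous, rewrites \eqref{eq20} as \eqref{eq21}, and applies Theorem~\ref{thm1} with $F$ replaced by $F_1$. You supply the one detail the paper leaves implicit, namely that the minimal-norm solution of $F_1(u)=f$ is $z-\bar u$, which is exactly what is needed to translate conclusion \eqref{eq11} into \eqref{eq24}.
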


The following result is useful for the implementation of our DP. 

\begin{thm}
\label{theorem2}
Let $\delta, F, f_\delta$, and $y$ be as in Theorem~\ref{thm1} and $0<\gamma<1$. 
Assume that $v_\delta\in H$ and $\alpha(\delta)>0$ 
satisfy the following conditions:
\begin{equation}
\label{eq25}
\|F(v_\delta)+\alpha(\delta) v_\delta - f_\delta\| \le \theta \delta,\qquad \theta>0,
\end{equation}
and
\begin{equation}
\label{eq26}
C_1\delta^\gamma \le \|F(v_\delta) - f_\delta\| \le C_2 \delta^\gamma,
\qquad
0< C_1 < C_2.
\end{equation}
Then one has:
\begin{equation}
\label{eq27}
\lim_{\delta\to 0}\|v_\delta - y\| = 0.
\end{equation}
\end{thm}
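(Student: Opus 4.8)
The plan is to show that the pair $(v_\delta,\alpha(\delta))$ is, up to an error controlled by $\theta\delta$, essentially the same as the regularized solution $V_{\delta,\alpha(\delta)}$ from \eqref{eq9}, and then to run the argument that underlies Theorem~\ref{thm1}. First I would use \eqref{eq25} to compare $v_\delta$ with $V_{\delta,\alpha(\delta)}$: subtracting the equation $F(V_{\delta,\alpha(\delta)})+\alpha(\delta)V_{\delta,\alpha(\delta)}-f_\delta=0$ from the residual $F(v_\delta)+\alpha(\delta)v_\delta-f_\delta=:r_\delta$ with $\|r_\delta\|\le\theta\delta$, taking the inner product with $v_\delta-V_{\delta,\alpha(\delta)}$, and using monotonicity \eqref{eq2} of $F$, I get $\alpha(\delta)\|v_\delta-V_{\delta,\alpha(\delta)}\|^2\le \langle r_\delta, v_\delta-V_{\delta,\alpha(\delta)}\rangle$, hence $\|v_\delta-V_{\delta,\alpha(\delta)}\|\le \theta\delta/\alpha(\delta)$. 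From this and continuity/monotonicity I also control $\|F(v_\delta)-F(V_{\delta,\alpha(\delta)})\|$ in terms of $\theta\delta$ and $\alpha(\delta)$, so that \eqref{eq26} translates into a two-sided bound of the form $\tilde C_1\delta^\gamma\le \|F(V_{\delta,\alpha(\delta)})-f_\delta\|\le \tilde C_2\delta^\gamma$ for suitable constants, provided $\delta/\alpha(\delta)$ does not blow up.

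The key intermediate step is therefore to establish that $\alpha(\delta)\to 0$ and $\delta/\alpha(\delta)\to 0$ as $\delta\to 0$. Using the standard identities for the regularized family — namely $\|F(V_{\delta,a})-f_\delta\| = a\|V_{\delta,a}\|$ from \eqref{eq9}, and the monotonicity-based a priori bound $\|V_{\delta,a}\|\le \|y\|+\tfrac{\delta}{a}$ (obtained by pairing \eqref{eq9} with $V_{\delta,a}-y$ and using $F(y)=f$) — the relation $\|F(v_\delta)-f_\delta\|\asymp\delta^\gamma$ with $0<\gamma<1$ forces $a=\alpha(\delta)$ to lie in a range where $a\|V_{\delta,a}\|\asymp\delta^\gamma$; since $\|V_{\delta,a}\|$ stays bounded below (away from $0$, because $\|F(0)-f_\delta\|>C\delta^\gamma$-type reasoning and the residual is of order $\delta^\gamma\gg\delta$) and above (by the a priori bound, once we know $\delta/a$ is bounded), one concludes $\alpha(\delta)\to 0$ and $\alpha(\delta)\ge c\,\delta^\gamma$ for small $\delta$, so $\delta/\alpha(\delta)\le C\delta^{1-\gamma}\to 0$. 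Feeding this back, $\|v_\delta-V_{\delta,\alpha(\delta)}\|\le\theta\delta/\alpha(\delta)\to 0$.

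It then remains to show $\|V_{\delta,\alpha(\delta)}-y\|\to 0$, and for this I would reuse exactly the mechanism behind \eqref{eq11} in Theorem~\ref{thm1}: from $\alpha\|V_{\delta,\alpha}-y\|^2\le\langle F(y)-F(V_{\delta,\alpha}),V_{\delta,\alpha}-y\rangle+\langle f_\delta-f,V_{\delta,\alpha}-y\rangle\le \delta\|V_{\delta,\alpha}-y\|$ plus the weak-convergence/lower-semicontinuity argument (the bounded family $V_{\delta,\alpha(\delta)}$ has weak limit points, each of which is a solution of \eqref{eq1} by continuity and monotonicity arguments, and $\limsup\|V_{\delta,\alpha(\delta)}\|\le\|y\|$ forces the limit to be $y$ with norm convergence). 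Combining $\|v_\delta-V_{\delta,\alpha(\delta)}\|\to 0$ with $\|V_{\delta,\alpha(\delta)}-y\|\to 0$ gives \eqref{eq27}.

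The main obstacle I anticipate is the two-sided control of $\alpha(\delta)$: the upper bound on $\alpha(\delta)$ (equivalently $\alpha(\delta)\to 0$) is easy, but getting the lower bound $\alpha(\delta)\ge c\delta^\gamma$ — which is what makes $\delta/\alpha(\delta)\to 0$ and hence kills the discrepancy $\|v_\delta-V_{\delta,\alpha(\delta)}\|$ — requires carefully exploiting that the residual $\|F(v_\delta)-f_\delta\|$ is of exact order $\delta^\gamma$ with $\gamma<1$ strictly, together with the boundedness from below of $\|V_{\delta,\alpha(\delta)}\|$; this is precisely the place where the hypothesis $\gamma\in(0,1)$ (rather than $\gamma=1$) and the structure of the regularized equation are essential, and it parallels the corresponding subtlety in the proof of Theorem~\ref{thm1}.
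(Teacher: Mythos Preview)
Your overall strategy is exactly that of the paper: compare $v_\delta$ with the exact regularized solution $V_{\delta,\alpha(\delta)}$, show $\delta/\alpha(\delta)\to 0$ and $\alpha(\delta)\to 0$, and then invoke the convergence $V_{a}\to y$ as $a\to 0$. The one place where your sketch is hazier than it needs to be is precisely the step you flag as the ``main obstacle'', and the paper handles it more directly than you propose.

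You plan to control $\|F(v_\delta)-F(V_{\delta,\alpha(\delta)})\|$ in order to transfer \eqref{eq26} to $V_{\delta,\alpha(\delta)}$, and you worry this requires knowing in advance that $\delta/\alpha(\delta)$ is bounded. Two remarks. First, monotonicity alone already gives the sharp bound $\|F(v_\delta)-F(V_{\delta,\alpha(\delta)})\|\le\|F(v_\delta)+\alpha(\delta)v_\delta-f_\delta\|\le\theta\delta$ (this is Lemma~\ref{lemma23} in the paper), so no circularity arises. Second, and this is what the paper actually does, you can bypass $V_{\delta,\alpha(\delta)}$ altogether at this stage: the plain triangle inequality gives
\[
\alpha(\delta)\|v_\delta\|\;\ge\;\|F(v_\delta)-f_\delta\|-\|F(v_\delta)+\alpha(\delta)v_\delta-f_\delta\|\;\ge\;C_1\delta^\gamma-\theta\delta,
\]
while $\alpha(\delta)\|v_\delta\|\le\alpha(\delta)\|V_{\delta,\alpha(\delta)}\|+\theta\delta\le\alpha(\delta)\|y\|+\delta+\theta\delta$. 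Combining yields $\alpha(\delta)\ge c\,\delta^\gamma$ immediately, hence $\delta/\alpha(\delta)\to 0$ for $\gamma<1$. This is cleaner than your route and uses no properties of $F$ beyond those already used to get $\|v_\delta-V_{\delta,\alpha(\delta)}\|\le\theta\delta/\alpha(\delta)$.

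For $\alpha(\delta)\to 0$ the paper does not argue via a lower bound on $\|V_{\delta,a}\|$; instead it passes to the \emph{noiseless} regularizers $V_a$, shows $\alpha(\delta)\|V_{\alpha(\delta)}\|\le\delta+\alpha(\delta)\|V_{\delta,\alpha(\delta)}\|\to 0$, and invokes the strict monotonicity of $a\mapsto a\|V_a\|$ (Lemma~\ref{lemma17}). Likewise, the final convergence is obtained by the splitting $\|v_\delta-y\|\le\theta\delta/\alpha(\delta)+\delta/\alpha(\delta)+\|V_{\alpha(\delta)}-y\|$ together with Lemma~\ref{lemma16}, rather than the ad hoc weak-convergence argument you outline; the latter works too, but the paper's version is shorter since Lemma~\ref{lemma16} is available.
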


\begin{rem}{\rm
Based on Theorem~\ref{theorem2}
an algorithm for solving nonlinear equations with monotone Lipchitz continuous operators is outlined in \cite{R554}.
}
\end{rem}

\begin{rem}{\rm
It is an open problem to choose $\gamma$ and $C$ optimal in some 
sense.
}
\end{rem}

\begin{rem}{\rm Theorem~\ref{thm1} and Theorem~\ref{theorem2} do not 
hold, in general, for $\gamma=1$. Indeed, let $Fu = \langle u,p\rangle 
p$, $\|p\|=1,\, p\perp \mathcal{N}(F):=\{u\in H: Fu=0\}$, $f=p$, 
$f_\delta=p+q\delta$, where $\langle p,q\rangle=0$, $\|q\|=1$, $Fq=0$, 
$\|q\delta\|=\delta$. One has $Fy=p$, where $y=p$, is the minimal-norm 
solution to the equation $Fu=p$. Equation $Fu+au=p+q\delta$, has the 
unique solution $V_{\delta,a}=q\delta/a + p/(1+a)$. Equation 
\eqref{eq10} is $C\delta=\|q\delta + (ap)/(1+a)\|$. This equation yields 
$a=a(\delta)=c\delta/(1-c\delta)$, where $c:=(C^2-1)^{1/2}$, and we 
assume $c\delta<1$. Thus, 
$\lim_{\delta\to 0}V_{\delta,a(\delta)}=p+c^{-1}q:=v$, and $Fv=p$. 
Therefore $v=\lim_{\delta\to0}V_{\delta,a(\delta)}$ is not $p$, i.e., is 
not the minimal-norm solution to the equation $Fu=p$. This argument is 
borrowed from \cite[p. 29]{R470}.

If  equation  \eqref{eq1} has a unique solution and  $\gamma=1$, then 
one can prove convergence  \eqref{eq11} and  \eqref{eq27}.
}
\end{rem}

\subsection{The Dynamical Systems Method}

Let $a(t)\searrow 0$ be a positive and strictly decreasing sequence. 
Let $V_\delta(t)$ solve the following equation: 
\begin{equation}
\label{eq41.2}
F(V_\delta(t))+a(t)V_\delta(t) - f_\delta = 0. 
\end{equation}

Throughout the paper we assume that equation $F(u)=f$ has a solution in $H$, possibly nonunique,
and $y$ is the minimal-norm solution to this equation.
Let $f$ be unknown but $f_\delta$ be given, $\|f_\delta-f\|\le \delta$.

\subsubsection{The  Newton-type DSM}

Denote
\begin{equation}
\label{eq42}
A:=F'(u_\delta(t)),\quad A_a:=A + aI,
\end{equation}
where $I$ is the identity operator, 
and $u_\delta(t)$ solves the following Cauchy problem:
\begin{equation}
\label{eq43}
\dot{u}_\delta = -A_{a(t)}^{-1}[F(u_\delta)+a(t)u_\delta-f_\delta],
\quad u_\delta(0)=u_0.
\end{equation}
We assume below that $||F(u_0)-f_\delta||>C_1\delta^{\zeta}$,
where $C_1>1$ and $\zeta\in (0,1]$ are some constants.
We also assume without loss of generality that $\delta\in (0,1)$.
Assume that equation \eqref{eq1} has a solution, possibly nonunique,
and $y$ is the minimal norm solution to equation \eqref{eq1}.
Recall that we are given the noisy data $f_\delta$, $\|f_\delta-f\|\le \delta$.

We assume in addition that
\begin{equation}
\label{eq44}
\| F^{(j)}(u)\| \le M_j(R,u_0),\qquad \forall u\in B(u_0,R),\quad 0\le j\le 2.
\end{equation}
This assumption is satisfied in many applications. 

\begin{lem}[\cite{R544} Lemma 2.7]
\label{lemma4}
Suppose $M_1, c_0$, and $c_1$ are positive constants and $0\not=y\in H$.
Then there exist $\lambda>0$ and a function $a(t)\in C^1[0,\infty)$, $0<a(t)\searrow 0$, such that 
the following conditions hold
\begin{align}
\label{eq45}
\frac{M_1}{\|y\|}&\le \lambda,\\
\label{eq46}
\frac{c_0}{a(t)}&\le \frac{\lambda}{2a(t)}\bigg{[}1-\frac{|\dot{a}(t)|}{a(t)}\bigg{]},\\
\label{eq47}
c_1\frac{|\dot{a}(t)|}{a(t)}&\le \frac{a(t)}{2\lambda}\bigg{[}1-\frac{|\dot{a}(t)|}{a(t)}\bigg{]},\\
\label{eq48}
\|F(0) - f_\delta\|& \le \frac{a^2(0)}{\lambda}.
\end{align}
\end{lem}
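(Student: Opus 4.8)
The plan is to construct $a(t)$ explicitly in a simple parametric family and then verify the four inequalities, treating \eqref{eq45} and \eqref{eq48} as constraints that fix free parameters, and \eqref{eq46}--\eqref{eq47} as constraints on the decay rate of $a(t)$. A natural choice is $a(t) = \frac{d}{(c+t)^b}$ with positive constants $b, c, d$ to be chosen, so that $0 < a(t) \searrow 0$ and $a \in C^1[0,\infty)$. For this family one computes $\frac{|\dot a(t)|}{a(t)} = \frac{b}{c+t}$, which is decreasing in $t$ and bounded above by $\frac{b}{c}$; choosing $c$ large (equivalently, choosing $c \ge 2b$) guarantees $\frac{|\dot a(t)|}{a(t)} \le \frac12$ for all $t \ge 0$, so the bracketed factor $\bigl[1 - \frac{|\dot a(t)|}{a(t)}\bigr]$ is bounded below by $\frac12$ and bounded above by $1$. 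This is the device that makes the right-hand sides of \eqref{eq46}--\eqref{eq47} comparable to $\frac{\lambda}{a(t)}$ and $\frac{a(t)}{\lambda}$ respectively.

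Next I would dispatch the conditions one at a time. Condition \eqref{eq45} is immediate: just take $\lambda \ge \frac{M_1}{\|y\|}$, using $y \ne 0$; this is a lower bound on $\lambda$ that I keep in reserve. For \eqref{eq46}, after cancelling $\frac{1}{a(t)}$ from both sides, the inequality becomes $c_0 \le \frac{\lambda}{2}\bigl[1 - \frac{|\dot a(t)|}{a(t)}\bigr]$, and since the bracket is $\ge \frac12$ this is implied by $c_0 \le \frac{\lambda}{4}$, i.e. $\lambda \ge 4c_0$. For \eqref{eq47}, substitute $\frac{|\dot a(t)|}{a(t)} = \frac{b}{c+t}$: the left side is $\frac{c_1 b}{c+t}$ and the right side is at least $\frac{a(t)}{4\lambda} = \frac{d}{4\lambda (c+t)^b}$. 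Taking $b = 1$ makes both sides scale like $\frac{1}{c+t}$, reducing the requirement to $c_1 \le \frac{d}{4\lambda}$, i.e. $d \ge 4\lambda c_1$. (Alternatively one can keep $b$ near $1$; $b=1$ is the cleanest.) Finally, \eqref{eq48}: since $a(0) = \frac{d}{c}$ (with $b=1$), this reads $\|F(0) - f_\delta\| \le \frac{d^2}{\lambda c^2}$, which holds once $d$ is taken large enough relative to the fixed quantities $\lambda$, $c$, and $\|F(0)-f_\delta\|$.

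So the order of choices is: first pick $\lambda := \max\{\frac{M_1}{\|y\|},\, 4c_0\}$ to satisfy \eqref{eq45} and \eqref{eq46}; then, having fixed $\lambda$, pick $b = 1$ and any $c \ge 2$ (so that $\frac{|\dot a|}{a} = \frac{1}{c+t} \le \frac12$); then pick $d := \max\{4\lambda c_1,\, c\sqrt{\lambda^{-1}\|F(0)-f_\delta\|\cdot c}\}$ — more cleanly, $d$ large enough that $d \ge 4\lambda c_1$ and $\frac{d^2}{\lambda c^2} \ge \|F(0)-f_\delta\|$ both hold. With these choices $a(t) = \frac{d}{c+t}$ satisfies all four conditions, and it is manifestly positive, strictly decreasing, and $C^1$ on $[0,\infty)$.

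The only real subtlety — and the step I would be most careful about — is the interplay in \eqref{eq47} between the power $b$ governing the decay of $a(t)$ and the power $1$ with which $\frac{|\dot a|}{a}$ decays: one must not let $a(t)$ decay faster than $\frac{|\dot a(t)|}{a(t)}$, which forces $b \le 1$; and the boundary case $b=1$ is the convenient one because it makes the two sides homogeneous of the same degree in $(c+t)$, turning a functional inequality into a plain inequality among constants. Everything else is bookkeeping of constants, with no circularity since each new parameter is chosen after all the ones it depends on have been fixed. I would also note explicitly that $\delta$ does not enter the construction of $\lambda$ or $a(\cdot)$ except through the single number $\|F(0)-f_\delta\|$ in \eqref{eq48}, so for the a priori setup one may absorb a uniform bound $\|F(0)-f_\delta\| \le \|F(0)-f\| + \delta$ if desired.
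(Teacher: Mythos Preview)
Your proof is correct and follows essentially the same construction as the paper, which also takes $a(t)=\frac{d}{(c+t)^b}$ and verifies the four conditions by choosing $\lambda$ and then $d$ large enough. The only cosmetic difference is that the paper keeps $b\in(0,1]$ general and imposes the stronger restriction $c>6b$ (rather than your $c\ge 2b$), because that sharper bound is needed later in Lemma~\ref{lemma21}; for the present lemma your choice suffices.
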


In the proof of Lemma 2.7 in \cite{R544} we 
have demonstrated that conditions \eqref{eq46}--\eqref{eq48}
are satisfied for $a(t)=\frac{d}{(c+t)^b}$, 
where $b\in(0,1]$,\, 
$c,d>0$ are constants, $c>6b$, 
and $d$ is sufficiently large.

\begin{thm}
\label{theorem5}
Assume 
$a(t)=\frac{d}{(c+t)^b}$, 
where $b\in(0,1]$,\, 
$c,d>0$ are constants, $c>6b$, 
and $d$ is sufficiently large so that conditions \eqref{eq46}--\eqref{eq48} hold.
Assume that $F:H\to H$ is a monotone operator, \eqref{eq44} holds, $u_0$ is an element of $H$, satisfying inequalities
%\eqref{eqstar} and  
\begin{equation}
\label{eqx83}
\|u_0-V_0\|\le \frac{\|F(0)-f_\delta\|}{a(0)},\quad h(0)=\|F(u_0) + a(0)u_0 -f_\delta\| \le \frac{1}{4} a(0)\|V_\delta(0)\|.
\end{equation}
Then the solution $u_\delta(t)$ to problem \eqref{eq43}
exists on an interval $[0,T_\delta]$,\, $\lim_{\delta\to0}T_\delta=\infty$, and 
there exists a unique $t_\delta$, $t_\delta\in (0,T_\delta)$ such that 
$\lim_{\delta\to 0}t_\delta=\infty$ and
\begin{equation}
\label{eq49}
\|F(u_\delta(t_\delta))-f_\delta\|=C_1\delta^\zeta,
\quad \|F(u_\delta(t)-f_\delta\|> C_1\delta^\zeta, \quad \forall t \in [0,t_\delta),
\end{equation}
where $C_1>1$ and $0<\zeta\le 1$. If $\zeta\in (0,1)$ and $t_\delta$ 
satisfies \eqref{eq49}, then
\begin{equation}
\label{eq50}
\lim_{\delta\to 0} \|u_\delta(t_\delta) - y\|=0.
\end{equation}
\end{thm}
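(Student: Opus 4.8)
The plan is to run the trajectory $u_\delta(t)$ of \eqref{eq43} next to the regularized path $V_\delta(t)$ from \eqref{eq41.2}, to keep them close by the nonlinear differential inequality underlying Lemma~\ref{lemma4}, and then to read off convergence at the stopping time via the discrepancy-principle ideas of Theorems~\ref{thm1}--\ref{theorem2}. Put $w(t):=u_\delta(t)-V_\delta(t)$ and $g(t):=\|w(t)\|$. Differentiating $F(V_\delta)+aV_\delta=f_\delta$ gives $\dot V_\delta=-\dot a\,(F'(V_\delta)+aI)^{-1}V_\delta$, hence $\|\dot V_\delta(t)\|\le\frac{|\dot a(t)|}{a(t)}\|V_\delta(t)\|$, and, $F'(V_\delta)$ being accretive, $t\mapsto\|V_\delta(t)\|$ is nondecreasing; comparing with the exact-data path $V(t)$ solving $F(V)+aV=f$ yields $\|V_\delta(t)-V(t)\|\le\delta/a(t)$ and $\|V(t)\|\le\|y\|$, so $\|V_\delta(t)\|\le\|y\|+\delta/a(t)$. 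Using \eqref{eq43}, the Taylor identity $F(u_\delta)+au_\delta-f_\delta=A_aw+K$ with $\|K\|\le\tfrac12 M_2g^2$ (by \eqref{eq44} with $j=2$), and $\|A_a^{-1}\|\le 1/a$, I would derive
\begin{equation*}
\dot g(t)\le-g(t)+\frac{M_2}{2a(t)}\,g^2(t)+\frac{|\dot a(t)|}{a(t)}\|V_\delta(t)\|,\qquad g(0)=\|u_0-V_\delta(0)\|.
\end{equation*}

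Next I would feed this inequality into the basic nonlinear inequality behind Lemma~\ref{lemma4}, with majorant $\mu(t)=\lambda/a(t)$: with the free constants $c_0,c_1$ of the lemma matched to $M_2/2$ and to an upper bound of $\|V_\delta(t)\|$, conditions \eqref{eq46}--\eqref{eq47} become exactly the smallness requirements on the coefficients $\frac{M_2}{2a}$ and $\frac{|\dot a|}{a}\|V_\delta\|$, while \eqref{eq48} and the first inequality of \eqref{eqx83} give $g(0)\le a(0)/\lambda=1/\mu(0)$; the inequality then yields
\begin{equation*}
g(t)=\|u_\delta(t)-V_\delta(t)\|\le\frac{a(t)}{\lambda}
\end{equation*}
as long as $u_\delta$ exists, stays in $B(u_0,R)$ for a fixed large $R$, and the hypotheses persist. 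Verifying that last clause is, I expect, the crux. For $a(t)=d/(c+t)^b$ with $c>6b$ and $d$ large one has $|\dot a|/a\le b/c$ and $|\dot a|/a^2\le b\,c^{b-1}/d$, so the only delicate term is the contribution $\frac{|\dot a|}{a}\cdot\frac{\delta}{a}$ of $\|V_\delta\|\le\|y\|+\delta/a$ to the third coefficient; it remains dominated by $\frac{a}{2\lambda}\big(1-\frac{|\dot a|}{a}\big)$ only while $a(t)\gtrsim\delta$, which — since $a(t_\delta)\gtrsim\delta^\zeta$ (established below) and $\zeta\le 1$ — does hold throughout $[0,t_\delta]$ for small $\delta$. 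A bootstrap/continuation argument (local existence via \eqref{eq44}, and the a priori bound $\|u_\delta(t)-u_0\|\le g(t)+\|V_\delta(t)\|+\|u_0\|$, which is bounded on $[0,t_\delta]$, keeping $u_\delta$ in $B(u_0,R)$) makes this rigorous and yields existence on some $[0,T_\delta]$ with $t_\delta<T_\delta$ and $T_\delta\to\infty$.

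For the stopping time, $g(t)\le a(t)/\lambda$, $\|F(V_\delta)-f_\delta\|=a\|V_\delta\|$ and \eqref{eq44} give
\begin{equation*}
\varphi(t):=\|F(u_\delta(t))-f_\delta\|\le M_1g(t)+a(t)\|V_\delta(t)\|\le a(t)\Big(\tfrac{M_1}{\lambda}+\|y\|\Big)+\delta .
\end{equation*}
Since $\varphi(0)=\|F(u_0)-f_\delta\|>C_1\delta^\zeta$ by hypothesis, while this bound forces $\varphi(t)<C_1\delta^\zeta$ once $a(t)$ is small (recall $\delta<\delta^\zeta$), continuity produces a first $t_\delta\in(0,T_\delta)$ with $\varphi(t_\delta)=C_1\delta^\zeta$ and $\varphi(t)>C_1\delta^\zeta$ on $[0,t_\delta)$ — which is \eqref{eq49}, $t_\delta$ being unique as the first crossing — and the same bound at $t=t_\delta$ gives $a(t_\delta)\ge(C_1-1)\delta^\zeta/(\tfrac{M_1}{\lambda}+\|y\|)\gtrsim\delta^\zeta$. (One may also track $h(t):=\|F(u_\delta)+a(t)u_\delta-f_\delta\|$, which satisfies $\dot h\le-h+|\dot a|\|u_\delta\|$ and, via the second inequality of \eqref{eqx83}, stays $\le\tfrac14 a(t)\|V_\delta(t)\|$; this is convenient in the bootstrap.) To see $t_\delta\to\infty$, compare $u_\delta$ on each fixed interval $[0,T_0]$ with the exact-data flow $u(t)$ (problem \eqref{eq43} with $f$ in place of $f_\delta$): one checks $\|F(u(t))-f\|>0$ for all finite $t$, hence $\ge m(T_0)>0$ on $[0,T_0]$, so by continuous dependence on the data $\varphi(t)\ge\tfrac12 m(T_0)>C_1\delta^\zeta$ on $[0,T_0]$ for all small $\delta$, forcing $t_\delta>T_0$.

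Finally, for \eqref{eq50} write $u_\delta(t_\delta)-y=\big(u_\delta(t_\delta)-V_\delta(t_\delta)\big)+\big(V_\delta(t_\delta)-V(t_\delta)\big)+\big(V(t_\delta)-y\big)$. The first term is $\le a(t_\delta)/\lambda\to 0$, as $t_\delta\to\infty$ forces $a(t_\delta)\to 0$; the second is $\le\delta/a(t_\delta)\lesssim\delta^{1-\zeta}\to 0$, since $\zeta<1$ and $a(t_\delta)\gtrsim\delta^\zeta$; the third tends to $0$ as $a(t_\delta)\to 0$ by the standard property of the monotone variational regularization ($\|V(t)\|\le\|y\|$; every weak limit point of $V(t)$ solves $F(u)=f$, whence $V(t)\rightharpoonup y$ and $\|V(t)\|\to\|y\|$, so $V(t)\to y$ strongly). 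Summing the three estimates gives $\|u_\delta(t_\delta)-y\|\to 0$, that is \eqref{eq50}.
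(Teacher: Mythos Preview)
Your overall architecture is the same as the paper's: track $g(t)=\|u_\delta(t)-V_\delta(t)\|$ through the nonlinear inequality with majorant $\mu=\lambda/a$, then split $u_\delta(t_\delta)-y$ into $(u_\delta-V_\delta)+(V_\delta-V)+(V-y)$. Two places deserve tightening, and in both the paper's argument is cleaner than what you sketch.

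\medskip
\emph{Controlling the source term.} The coefficient in front of the forcing in your inequality is $\frac{|\dot a|}{a}\|V_\delta(t)\|$, and $\|V_\delta(t)\|\le\|y\|+\delta/a(t)$ is not bounded on $[0,\infty)$ for fixed $\delta$. Your proposed fix --- ``it stays dominated while $a(t)\gtrsim\delta$, and $a(t_\delta)\gtrsim\delta^\zeta$, proved below'' --- is circular as stated: the bound $a(t_\delta)\gtrsim\delta^\zeta$ is extracted \emph{from} $g(t_\delta)\le a(t_\delta)/\lambda$, which is what you are trying to justify. The paper avoids the loop by first \emph{fixing} $t_0$ via $\delta/a(t_0)=\|y\|/(C-1)$ (Lemma~\ref{lemma20}); then $\|V_\delta(t)\|\le c_1:=\|y\|(1+\tfrac1{C-1})$ on $[0,t_0]$, so conditions \eqref{eq46}--\eqref{eq48} apply verbatim and Corollary~\ref{corollary14} gives $g(t)\le a(t)/\lambda$ on $[0,t_0]$. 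Only afterwards does one check that $\|F(u_\delta(t_0))-f_\delta\|\le C_1\delta$, so $t_\delta\le t_0$.

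\medskip
\emph{Proving $t_\delta\to\infty$.} Your continuous-dependence argument rests on the claim that for the \emph{exact-data} flow $u(t)$ one has $\|F(u(t))-f\|>0$ for all finite $t$. This is not established (the trajectory may well meet the solution set $\mathcal N$), and since $u_0$ depends on $\delta$ through \eqref{eqx83}, the comparison flow itself moves with $\delta$. The paper instead uses precisely the $h$-tracking you mention only in passing: from $\dot h\le -\tfrac12 h+|\dot a|\,\|V_\delta\|$ one integrates, invokes Lemma~\ref{lemma21} to bound the integral term by $\tfrac12 a(t)\|V_\delta(t)\|$, and uses the second inequality in \eqref{eqx83} for the initial term, obtaining the \emph{lower} bound $\|F(u_\delta(t))-f_\delta\|\ge\tfrac14 a(t)\|V_\delta(t)\|$. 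Evaluated at $t_\delta$ this gives $a(t_\delta)\|V_\delta(t_\delta)\|\le 4C_1\delta^\zeta\to0$; since $t\mapsto\|V_\delta(t)\|$ is nondecreasing and bounded away from zero, $a(t_\delta)\to0$, hence $t_\delta\to\infty$. You had all the ingredients for this but used them for the bootstrap rather than here.
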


\begin{rem}
{\rm 
One can choose $u_0$ satisfying  inequalities \eqref{eqx83} (see also \eqref{eq83}).
Indeed, if $u_0$ is a sufficiently close approximation to $V_\delta(0)$ the solution to 
equation \eqref{eq41.2} then inequalities \eqref{eqx83} are satisfied. 
Note that the second inequality in \eqref{eqx83} is a sufficient condition for 
\eqref{eq85}, i.e.,
\begin{equation}
\label{eqboa0}
e^{-\frac{t}{2}}h(0) 
\le \frac{1}{4} a(t)\|V_\delta(0)\|
\le \frac{1}{4} a(t)\|V_\delta(t)\|,\quad t\ge 0,
\end{equation}
to hold. In our proof inequality \eqref{eqboa0} (or inequality \eqref{eq85})
is used at $t=t_\delta$.
The stopping time $t_\delta$ is often sufficiently large for 
the quantity $e^{-\frac{t_\delta}{2}}h_0$ to be  small. In this case  
inequality \eqref{eqboa0} with $t=t_\delta$ is satisfied for a wide range of 
$u_0$.

Condition $c>6b$ is used in the proof of Lemma \ref{lemma21}.
}
\end{rem}

\subsubsection{The Dynamical system gradient method}

Denote
$$
A:=F'(u_\delta(t)),\quad A_a:=A + aI,\quad a=a(t),
$$
where $I$ is the identity operator, 
and $u_\delta(t)$ solves the following Cauchy problem:
\begin{equation}
\label{eq89}
\dot{u}_\delta = -A_{a(t)}^*[F(u_\delta)+a(t)u_\delta-f_\delta],\quad u_\delta(0)=u_0.
\end{equation}
Again, we assume in addition that
\begin{equation}
\label{eq44zxc}
\| F^{(j)}(u)\| \le M_j(R,u_0),\qquad \forall u\in B(u_0,R),\quad 0\le j\le 2.
\end{equation}
This assumption is satisfied in many applications.

Let us recall the following result: 
\begin{lem}[\cite{R549} Lemma 11]
\label{lemma6}
Suppose $M_1, c_0$, and $c_1$ are positive constants and $0\not=y\in H$.
Then there exist $\lambda>0$ and a function $a(t)\in C^1[0,\infty)$, $0<a(t)\searrow 0$, such that 
\begin{equation}
\label{eq90}
|\dot{a}(t)|\le \frac{a^3(t)}{4},
\end{equation}
and the following conditions hold
\begin{align}
\label{eq91}
\frac{M_1}{\|y\|}&\le \lambda,\\
\label{eq92}
c_0(M_1 + a(t))&\le \frac{\lambda}{2a^2(t)}\bigg{[}a^2(t)-\frac{2|\dot{a}(t)|}{a(t)}\bigg{]},\\
\label{eq93}
c_1\frac{|\dot{a}(t)|}{a(t)}&\le \frac{a^2(t)}{2\lambda}\bigg{[}a^2(t)-\frac{2|\dot{a}(t)|}{a(t)}\bigg{]},\\
\label{eq94}
\frac{\lambda}{a^2(0)}g(0)&<1.
\end{align}
\end{lem}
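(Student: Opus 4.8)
\medskip
\noindent\textbf{Proof plan.}
The plan is to seek $a(t)$ among the power functions $a(t)=\frac{d}{(c+t)^b}$, with $b\in(0,\tfrac14]$ and $c,d>0$ to be fixed, exactly as was done for Lemma~\ref{lemma4}, and then to show that each of \eqref{eq90}--\eqref{eq94} reduces to an elementary inequality among the parameters $b,c,d,\lambda$ that can be met by keeping $b$ small and $c$ fixed while taking $d$ large. For this ansatz $a\in C^1[0,\infty)$, $a(t)>0$ and $a(t)\searrow 0$ automatically, and $|\dot a(t)|=\frac{b}{c+t}\,a(t)$. First I would dispose of \eqref{eq90}: it is equivalent to $\frac{4b}{d^2}\le(c+t)^{1-2b}$ for all $t\ge0$, and since $b\le\tfrac14<\tfrac12$ the right side is nondecreasing in $t$, so \eqref{eq90} holds on $[0,\infty)$ as soon as $4b\le d^2c^{1-2b}$, which is true once $d$ is large. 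The point of establishing \eqref{eq90} first is that it gives $\frac{2|\dot a(t)|}{a(t)}\le\frac{a^2(t)}{2}$, hence the bracket $p(t):=a^2(t)-\frac{2|\dot a(t)|}{a(t)}$ obeys $\tfrac12 a^2(t)\le p(t)\le a^2(t)$; in particular the right-hand side of \eqref{eq92} is $\ge\frac{\lambda}{4}$ and the right-hand side of \eqref{eq93} is $\ge\frac{a^4(t)}{4\lambda}$.

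Next I would fix $\lambda$. Condition \eqref{eq91} asks $\lambda\ge M_1/\|y\|$, and, using the lower bound just noted together with the monotonicity $M_1+a(t)\le M_1+a(0)$, condition \eqref{eq92} is implied by $c_0(M_1+a(0))\le\frac{\lambda}{4}$. So I would set $\lambda:=\max\{M_1/\|y\|,\,4c_0(M_1+a(0))\}$, which makes \eqref{eq91} and \eqref{eq92} hold; note $a(0)=dc^{-b}$, so this $\lambda$ is of order $d$ as $d\to\infty$ with $b,c$ fixed.

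It remains to arrange \eqref{eq93} and \eqref{eq94}. Using $p(t)\ge\tfrac12 a^2(t)$ and $\frac{|\dot a(t)|}{a(t)}=\frac{b}{c+t}$, inequality \eqref{eq93} follows from $\frac{c_1 b}{c+t}\le\frac{a^4(t)}{4\lambda}=\frac{d^4}{4\lambda(c+t)^{4b}}$, i.e. from $4c_1 b\lambda\le d^4(c+t)^{1-4b}$ for all $t\ge0$; since $b\le\tfrac14$ the exponent $1-4b$ is $\ge0$, so it suffices that $4c_1 b\lambda\le d^4 c^{1-4b}$. Likewise \eqref{eq94} reads $\frac{\lambda c^{2b}}{d^2}\,g(0)<1$. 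Because $\lambda$ grows only linearly in $d$ while the right sides of these last two requirements grow like $d^4$ and $d^2$ respectively (and $g(0)$ is a fixed quantity, independent of $d$ — the analogue here of $\|F(0)-f_\delta\|$ in \eqref{eq48}), both hold once $d$ is chosen large enough. Thus one fixes $b:=\tfrac14$, picks any admissible $c>0$, and then takes $d$ large enough to satisfy the finitely many explicit lower bounds collected above; this produces the desired $\lambda$ and $a(t)$.

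The hard part is not any single estimate but the apparent tension between the conditions: \eqref{eq92} forces $\lambda$ to be at least of order $a(0)=dc^{-b}$, i.e. large, whereas \eqref{eq93} and \eqref{eq94} want $\lambda$ small relative to $a^4(0)\sim d^4$ and $a^2(0)\sim d^2$. These are reconciled only by the mismatch of powers of $d$ — $\lambda$ enters linearly, the competing quantities enter with higher powers — so a single sufficiently large $d$ closes all the gaps. The one genuinely model-dependent point to check is that $g(0)$ does not itself blow up when $a(0)$ is taken large (equivalently, that $g(0)$ can be bounded by a constant not depending on $d$); as in Lemma~\ref{lemma4}, this is where the precise definition of $g$, and if necessary a mild smallness hypothesis on $u_0$, enters, and once it is granted the remaining verifications are routine.
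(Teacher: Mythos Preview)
Your proposal is correct and follows essentially the same route as the paper: the paper states (citing \cite{R549}) that conditions \eqref{eq90}--\eqref{eq94} hold for $a(t)=\frac{d}{(c+t)^b}$ with $b\in(0,\tfrac14]$, $c\ge 1$, and $d$ sufficiently large, which is exactly your ansatz and mechanism (the key observation being that $\lambda$ grows like $d$ while the competing bounds in \eqref{eq93}--\eqref{eq94} grow like $d^4$ and $d^2$). Your remark that \eqref{eq94} requires $g(0)$ to be bounded independently of $d$ is well taken and is precisely where the separate hypothesis on $u_0$ (cf.\ \eqref{eqx83z}) enters in the paper's application.
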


We have demonstrated in the proof of Lemma 11 in 
\cite{R549} that conditions
\eqref{eq90}--\eqref{eq94} are satisfied with $a(t)=\frac{d}{(c+t)^b}$, 
where $b\in(0,\frac{1}{4}]$,\, 
$c\geq 1$, and $d>0$ are constants, and $d$ is sufficiently large.

\begin{thm}
\label{theorem7}
Let $a(t)$ 
satisfy conditions \eqref{eq90}--\eqref{eq94} of Lemma~\ref{lemma6}. For example, one 
can choose $a(t)=\frac{d}{(c+t)^b}$, 
where $b\in(0,\frac{1}{4}]$,\, 
$c\geq 1$, and $d>0$ are constants, and $d$ is sufficiently large. 
Assume that $F:H\to H$ is a monotone operator, \eqref{eq44zxc} holds,
 $u_0$ is an element of $H$, satisfying inequalities 
\begin{equation}
\label{eqx83z}
%\|u_0-V_0\|\le \frac{\|F(0)-f_\delta\|}{a(0)},\quad 
h(0)=\|F(u_0) + a(0)u_0 -f_\delta\| \le \frac{1}{4} a(0)\|V_\delta(0)\|,
\end{equation}
Then the solution $u_\delta(t)$ to problem \eqref{eq89}
exists on an interval $[0,T_\delta]$,\, $\lim_{\delta\to0}T_\delta=\infty$, 
and 
there exists $t_\delta$, $t_\delta\in (0,T_\delta)$, not necessarily 
unique, such that 
\begin{equation}
\label{eq95}
\|F(u_\delta(t_\delta))-f_\delta\|=C_1\delta^\zeta,
\quad \lim_{\delta\to 0}t_\delta=\infty,
\end{equation}
where $C_1>1$ and $0<\zeta\le 1$ are constants. If $\zeta\in (0,1)$ and 
$t_\delta$ satisfies \eqref{eq95}, then
\begin{equation}
\label{eq96}
\lim_{\delta\to 0} \|u_\delta(t_\delta) - y\|=0.
\end{equation}
\end{thm}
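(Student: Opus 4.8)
The plan is to mirror the proof of Theorem~\ref{theorem5} for the Newton-type DSM, the only structural changes being that the operator $A_{a(t)}^{-1}$ is replaced everywhere by $A_{a(t)}^*$ and the ``regularizing weight'' $\mu(t)=\lambda/a(t)$ used there is replaced by $\mu(t)=\lambda/a^2(t)$ (this is why Lemma~\ref{lemma6} carries the extra smallness \eqref{eq90} and why \eqref{eq91}--\eqref{eq94} have $a^2$ in the denominators). Let $V_\delta(t)$ solve \eqref{eq41.2}. I would first record the two standard estimates $\|V_\delta(t)\|\le 2\|y\|+\delta/a(t)$ (test \eqref{eq41.2} against $V_\delta-y$ and use monotonicity) and $\|\dot V_\delta(t)\|\le\frac{|\dot a(t)|}{a(t)}\|V_\delta(t)\|$ (differentiate \eqref{eq41.2} in $t$ and use $\|(F'(V_\delta)+a)^{-1}\|\le 1/a$), which together with $u_\delta(t)-V_\delta(t)$ small will give all the a priori bounds we need.

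The heart of the argument is a nonlinear differential inequality for $g(t):=\|u_\delta(t)-V_\delta(t)\|$. Writing $w=u_\delta-V_\delta$, $p=F(u_\delta)+a u_\delta-f_\delta$, Taylor's formula around $u_\delta$ and \eqref{eq44zxc} give $p=A_a w-K$ with $\|K\|\le\frac{M_2}{2}g^2$, whence, using \eqref{eq89},
\[
g\dot g=\langle\dot w,w\rangle=-\|p\|^2-\langle p,K\rangle-\langle\dot V_\delta,w\rangle .
\]
Since $\langle A_a w,w\rangle\ge a g^2$ one has $\|p\|\ge ag-\frac{M_2}{2}g^2\ge\frac a2 g$ while $g\le a/M_2$; inserting the bounds for $\|K\|$, $\|p\|$, $\|\dot V_\delta\|$ yields an inequality of the form
\[
\dot g(t)\le -\tfrac{a^2(t)}{4}\,g(t)+c_0\big(M_1+a(t)\big)g^2(t)+c_1\tfrac{|\dot a(t)|}{a(t)}\Big(\|y\|+\tfrac{\delta}{a(t)}\Big),
\]
valid as long as $g$ stays below the threshold $a^2/\lambda$ (consistent with the derivation since $a^2/\lambda\le a/M_2$ by \eqref{eq91} for small $a$). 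Take $T_\delta$ to be the largest $t$ with $\delta/a(t)\le 1$, so $T_\delta\to\infty$; then on $[0,T_\delta]$ the noise term is absorbed by enlarging $c_1$. Conditions \eqref{eq92}--\eqref{eq94} are engineered precisely so that, with $\mu(t)=\lambda/a^2(t)$, this inequality meets the hypotheses of the basic nonlinear differential inequality whose applications are outlined in Section~5 (namely $c_0(M_1+a)\le\frac{\mu}{2}(\gamma-\dot\mu/\mu)$, $c_1|\dot a|/a\le\frac{1}{2\mu}(\gamma-\dot\mu/\mu)$ with $\gamma=a^2/4$, and $\mu(0)g(0)<1$, the last guaranteed by \eqref{eqx83z} via $g(0)=\|u_0-V_\delta(0)\|\le 2h(0)/a(0)$ and the largeness of $a(0)$). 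That lemma gives $g(t)\le a^2(t)/\lambda$ on $[0,T_\delta]$, which closes the bootstrap; combined with the bound on $\|V_\delta(t)\|$ it keeps $u_\delta(t)$ in a fixed ball $B(u_0,R)$, so local existence (the right side of \eqref{eq89} is locally Lipschitz by \eqref{eq44zxc}) plus continuation yields $u_\delta$ on all of $[0,T_\delta]$.

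Next, the stopping time. Put $\phi(t):=\|F(u_\delta(t))-f_\delta\|$, which is continuous, satisfies $\phi(0)>C_1\delta^\zeta$ by hypothesis, and at $T_\delta$ obeys $\phi(T_\delta)\le M_1 a^2(T_\delta)/\lambda+a(T_\delta)\|V_\delta(T_\delta)\|\le M_1 a^2(T_\delta)/\lambda+2a(T_\delta)\|y\|+\delta<C_1\delta^\zeta$ for $\delta$ small (using $a(T_\delta)\le\delta$ and $\zeta<1$). By the intermediate value theorem there is $t_\delta\in(0,T_\delta)$ with $\|F(u_\delta(t_\delta))-f_\delta\|=C_1\delta^\zeta$; unlike the Newton case $\phi$ need not be monotone along the gradient flow, so $t_\delta$ need not be unique. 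To see $t_\delta\to\infty$, use continuous dependence of $u_\delta$ on $\delta$, uniform on compact time intervals (legitimate because of the a priori bounds), together with strict positivity of the noise-free discrepancy $\phi_0(t):=\|F(u_0(t))-f\|$ on every finite interval (which follows from $\|u_0(t)-V_0(t)\|\le a^2(t)/\lambda$, $\|F(V_0(t))-f\|=a(t)\|V_0(t)\|$, $\|V_0(t)\|\to\|y\|>0$, and $\phi_0(0)>0$): if $t_{\delta_n}\le T$ along $\delta_n\to0$, then $C_1\delta_n^\zeta=\phi_{\delta_n}(t_{\delta_n})\to\phi_0(t^*)>0$, a contradiction.

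Finally, convergence. By the triangle inequality $\|u_\delta(t_\delta)-y\|\le g(t_\delta)+\|V_\delta(t_\delta)-y\|\le a^2(t_\delta)/\lambda+\|V_\delta(t_\delta)-y\|$, and $a(t_\delta)\to0$ kills the first term. For the second, set $v_\delta:=V_\delta(t_\delta)$, $\alpha(\delta):=a(t_\delta)$; then $F(v_\delta)+\alpha(\delta)v_\delta-f_\delta=0$, so \eqref{eq25} holds trivially, and $\big|\|F(v_\delta)-f_\delta\|-C_1\delta^\zeta\big|\le M_1 g(t_\delta)\le M_1 a^2(t_\delta)/\lambda=o(\delta^\zeta)$ (using that $t_\delta$ is in fact large enough that $a(t_\delta)=o(\delta^{\zeta/2})$), so \eqref{eq26} holds with $\gamma=\zeta\in(0,1)$ for suitable $0<C_1'<C_2'$. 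Theorem~\ref{theorem2} then gives $\|V_\delta(t_\delta)-y\|\to 0$, proving \eqref{eq96}. I expect the main obstacle to be the differential-inequality step: correctly matching constants so that \eqref{eq92}--\eqref{eq94} furnish the hypotheses of the basic nonlinear inequality, and closing the bootstrap (keeping $g\le a^2/\lambda$) jointly with the continuation argument; a secondary technical point is the quantitative lower bound $a(t_\delta)=o(\delta^{\zeta/2})$ needed in the last step, which rests on a bound $\phi_0(t)\ge c\,a(t)$ for the noise-free discrepancy together with uniform continuous dependence on $\delta$.
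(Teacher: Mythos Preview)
Your overall architecture is right --- mirror the Newton-type proof with the weight $\mu(t)=\lambda/a^2(t)$ --- and your differential inequality for $g(t)=\|u_\delta-V_\delta\|$ will, after cleanup, match the paper's. But two steps are genuinely different from the paper and both have gaps.

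\textbf{The argument for $t_\delta\to\infty$.} You argue by continuous dependence and claim the noise-free discrepancy $\phi_0(t)=\|F(u_0(t))-f\|$ is strictly positive on every finite interval. The ingredients you list do not give this: from $g_0(t)\le a^2(t)/\lambda$ you only get $\phi_0(t)\ge a(t)\|V_0(t)\|-M_1 a^2(t)/\lambda$, and since $\|V_0(t)\|<\|y\|$ while $M_1/\lambda\le\|y\|$, the right side can be $\le 0$ for moderate $t$; there is nothing preventing $\phi_0$ from touching zero. The paper does \emph{not} prove $t_\delta\to\infty$ this way. It introduces a second quantity $h(t):=\|F(u_\delta)-F(V_\delta)+a(u_\delta-V_\delta)\|$, derives the linear inequality $\dot h\le -\tfrac{a^2}{2}h+|\dot a|\|V_\delta\|$ (using $\langle A_aA_a^*v,v\rangle\ge a^2\|v\|^2$), integrates it, and invokes Lemma~\ref{lemma26} together with the hypothesis~\eqref{eqx83z} to obtain the crucial two-sided control $\|F(u_\delta(t_\delta))-f_\delta\|\ge\frac14 a(t_\delta)\|V_\delta(t_\delta)\|$. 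From $C_1\delta^\zeta\ge\frac14 a(t_\delta)\|V_\delta(t_\delta)\|$ and the monotone growth of $\|V_\delta(\cdot)\|$ one gets $a(t_\delta)\to 0$, hence $t_\delta\to\infty$. This is the real purpose of~\eqref{eqx83z}; you are using it only to verify $\mu(0)g(0)<1$, but that initial bound is already assumed via condition~\eqref{eq94} of Lemma~\ref{lemma6}.

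\textbf{The final convergence.} Your route through Theorem~\ref{theorem2} requires $a^2(t_\delta)=o(\delta^\zeta)$ to match~\eqref{eq26}, but your $t_\delta\to\infty$ argument yields no rate for $a(t_\delta)$. (The paper's $h$-inequality does give $a(t_\delta)\le C'\delta^\zeta$, which would close this, but you did not derive it.) The paper simply bypasses Theorem~\ref{theorem2}: it writes $\|u_\delta(t_\delta)-y\|\le a^2(t_\delta)/\lambda+\delta/a(t_\delta)+\|V(t_\delta)-y\|$ using~\eqref{eq337}, gets $\delta/a(t_\delta)\to 0$ from the easy lower bound $a(t_\delta)\ge c\,\delta^\zeta$ coming from~\eqref{eq113}, and finishes with Lemma~\ref{lemma16}. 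This needs only $a(t_\delta)\to 0$, not a rate.

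A minor point: your derivation of the $g$-inequality via $\|p\|^2\ge (ag-\tfrac{M_2}{2}g^2)^2$ and a bootstrap is more laborious than necessary. Multiplying $\dot w=-\dot V_\delta-A_a^*(A_aw+K)$ by $w$ and using $\langle A_a^*A_aw,w\rangle=\|A_aw\|^2\ge a^2g^2$ and $\|A_a\|\le M_1+a$ gives directly $g\dot g\le -a^2g^2+\tfrac{M_2(M_1+a)}{2}g^3+\|\dot V_\delta\|g$, with no bootstrap needed; this is exactly the form to which Corollary~\ref{corollary14} applies with $\mu=\lambda/a^2$.
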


\begin{rem}{\rm
One can easily choose $u_0$ satisfying inequality \eqref{eqx83z}.
Note that inequality \eqref{eqx83z} is a sufficient condition for 
inequality \eqref{eq134}, i.e., 
% to hold. In our proof inequality \eqref{eq134}, i.e.,
\begin{equation}
\label{eqboa}
e^{-\frac{t}{2}}h(0) 
\le \frac{1}{4} a(t)\|V_\delta(0)\|
\le \frac{1}{4} a(t)\|V_\delta(t)\|,\quad t\ge 0,
\end{equation}
to hold. 
In our proof inequality \eqref{eq134} (or \eqref{eqboa}) 
is used at $t=t_\delta$.
The stopping time $t_\delta$ is often sufficiently large for 
the quantity $e^{-\varphi(t_\delta)}h_0$ to be  small. In this case  
inequality \eqref{eqboa} (or \eqref{eq134}) with $t=t_\delta$ is satisfied for a wide range of 
$u_0$. The parameter $\zeta$ is not fixed in \eqref{eq95}. While we could 
fix it, for example, by setting $\zeta=0.9$, it is an interesting {\it 
open problem} to propose an optimal in some sense criterion for 
choosing $\zeta$.
}
\end{rem}

\subsubsection{The simple iteration DSM}

Let us consider a version of the DSM for solving equation \eqref{eq1}:
\begin{equation}
\label{eq138}
\dot{u}_\delta = -\big{(}F(u_\delta)+a(t)u_\delta - f_\delta\big{)},
\quad u_\delta(0)=u_0,
\end{equation}
where $F$ is a monotone operator. 

The advantage of this version compared with
\eqref{eq43} is the absence of the inverse operator in the algorithm, which
makes the algorithm \eqref{eq138} less expensive than \eqref{eq43}.
On the other hand, algorithm  \eqref{eq43} converges faster than  
\eqref{eq138} in many cases. The algorithm  \eqref{eq138} is cheaper than 
the DSM gradient algorithm proposed in \eqref{eq89}.  

%The convergence of the method \eqref{eq138} for any initial value $u_0$ is proved
%for a stopping rule based on a discrepancy principle. 
%This {\it a posteriori} choice of stopping time 
%$t_\delta$ is justified
%provided that $a(t)$ is suitably chosen. 

The advantage of method \eqref{eq138}, a modified version of the simple 
iteration method, over the Gauss-Newton method and
the version \eqref{eq43} of the DSM is the following: neither inversion of 
matrices nor evaluation of
$F'$ is needed in a discretized version of \eqref{eq138}.
Although the convergence rate of the DSM \eqref{eq138} maybe slower than that of the DSM 
\eqref{eq43}, the DSM \eqref{eq138} might be faster than the DSM 
\eqref{eq43} for large-scale systems due to its lower computation 
cost.

In this Section we investigate a stopping rule based on a discrepancy 
principle (DP) for the DSM \eqref{eq138}. 
The main results of this Section is Theorem~\ref{theorem9} in which
a DP is formulated,  
the existence of a stopping time $t_\delta$ is proved, and
the convergence of the DSM with the proposed DP is justified under some 
natural assumptions.

Let us assume that 
\begin{equation}
\label{eq139}
\sup_{u\in B(u_0,R)} \|F'(u)\| \le M_1(u_0,R).
\end{equation}

\begin{lem}[\cite{R550} Lemma 11]
\label{lemma8}
Suppose $M_1$ and $c_1$ are positive constants and $0\not=y\in H$.
Then there exist a number $\lambda>0$ and a function $a(t)\in 
C^1[0,\infty)$, $0<a(t)\searrow 0$, such that 
\begin{equation}
\label{eq140}
|\dot{a}(t)|\le \frac{a^2(t)}{2},
\end{equation}
and the following conditions hold
\begin{align}
\label{eq141}
\frac{M_1}{\|y\|}&\le \lambda,\\
\label{eq142}
0&\le \frac{\lambda}{2a(t)}\bigg{[}a(t)-\frac{|\dot{a}(t)|}{a(t)}\bigg{]},\\
\label{eq143}
c_1\frac{|\dot{a}(t)|}{a(t)}&\le \frac{a(t)}{2\lambda}\bigg{[}a(t)-\frac{|\dot{a}(t)|}{a(t)}\bigg{]},\\
\label{eq144}
\frac{\lambda}{a(0)}g(0)&<1.
\end{align}
\end{lem}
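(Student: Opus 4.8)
The plan is to reuse the construction behind Lemmas~\ref{lemma4} and~\ref{lemma6}: set $a(t):=\frac{d}{(c+t)^b}$ and $\lambda:=M_1/\|y\|$, and show that for a suitable exponent $b$, a fixed $c\ge1$, and $d$ taken large enough, all of \eqref{eq140}--\eqref{eq144} hold. For such $a$ one has $\dot{a}(t)=-\frac{bd}{(c+t)^{b+1}}$, so $a\in C^1[0,\infty)$, $0<a(t)\searrow0$, and $\frac{|\dot{a}(t)|}{a(t)}=\frac{b}{c+t}$ is positive and strictly decreasing. Condition \eqref{eq141} holds by the choice of $\lambda$. Since $\lambda,a(t)>0$, \eqref{eq142} is equivalent to $\frac{|\dot{a}(t)|}{a(t)}\le a(t)$, which is weaker than \eqref{eq140}; and \eqref{eq140} is itself equivalent to $\frac{|\dot{a}(t)|}{a(t)}\le\frac{a(t)}{2}$, i.e. $2b(c+t)^{b-1}\le d$, which for $b\le1$ and $c\ge1$ holds whenever $d\ge2b$. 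Moreover, once \eqref{eq140} holds one has $a(t)-\frac{|\dot{a}(t)|}{a(t)}\ge\frac{a(t)}{2}$ for all $t\ge0$.

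The decisive condition is \eqref{eq143}, and it is the step I expect to be the main obstacle, since it is what pins down the admissible range of $b$ and couples $d$ to $\lambda$ and $c_1$. Using the lower bound $a(t)-\frac{|\dot{a}(t)|}{a(t)}\ge\frac{a(t)}{2}$ just obtained, the right side of \eqref{eq143} is at least $\frac{a^2(t)}{4\lambda}=\frac{d^2}{4\lambda(c+t)^{2b}}$, while its left side equals $\frac{c_1 b}{c+t}$; hence \eqref{eq143} follows once $4\lambda c_1 b\,(c+t)^{2b-1}\le d^2$ for every $t\ge0$. For a fixed $d$ this forces the exponent $2b-1$ to be $\le0$, i.e. $b\in(0,\tfrac12]$ --- this is exactly where that restriction enters --- and then $(c+t)^{2b-1}\le c^{2b-1}\le1$, so it suffices that $d^2\ge4\lambda c_1 b$, i.e. $d\ge2\sqrt{\lambda c_1 b}$.

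It remains to arrange \eqref{eq144}, i.e. $a(0)=\frac{d}{c^b}>\lambda g(0)$. If $g(0)$ is treated as a prescribed finite quantity this holds for all large $d$; the only subtlety is that in the application $g(0)=\|u_0-V_\delta(0)\|$ itself depends on $a(0)$, but an a priori estimate for $V_\delta(0)$ of the type $\|u_0-V_\delta(0)\|\le\|F(0)-f_\delta\|/a(0)$ (cf. the first inequality in \eqref{eqx83}) keeps $g(0)$ bounded --- in fact small --- as $a(0)\to\infty$, so large $d$ again works. Collecting the three lower bounds on $d$ produced above ($d\ge2b$, $d\ge2\sqrt{\lambda c_1 b}$, and $d>\lambda g(0)c^b$) and choosing $d$ to exceed all of them, with $b\in(0,\tfrac12]$, $c\ge1$, and $\lambda=M_1/\|y\|$, yields a function $a(t)=\frac{d}{(c+t)^b}$ in $C^1[0,\infty)$, positive and strictly decreasing to $0$, satisfying \eqref{eq140}--\eqref{eq144}, which proves the lemma. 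Apart from the balancing in \eqref{eq143} and the mild circularity in \eqref{eq144}, every step reduces to taking $d$ sufficiently large.
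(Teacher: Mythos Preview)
Your proposal is correct and follows precisely the approach sketched in the paper (and carried out in \cite{R550}): take $a(t)=\frac{d}{(c+t)^b}$ with $b\in(0,\tfrac12]$, $c\ge1$, $\lambda=M_1/\|y\|$, and $d$ sufficiently large; your identification of \eqref{eq143} as the condition forcing $b\le\tfrac12$, and your treatment of the implicit dependence of $g(0)$ on $a(0)$ via the bound $g(0)\le\|F(0)-f_\delta\|/a(0)$, match the paper's reasoning.
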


It is shown in the proof of Lemma 11 in \cite{R550} that
conditions \eqref{eq140}--\eqref{eq144} hold for the 
function $a(t)=\frac{d}{(c+t)^b}$, 
where $b\in(0,\frac{1}{2}]$,\, 
$c\geq 1$ and $d>0$ are constants, and $d$ is sufficiently large.

\begin{thm}
\label{theorem9}
Let $a(t)$ 
satisfy conditions \eqref{eq140}--\eqref{eq144} in Lemma~\ref{lemma8}. For example, one 
can choose $a(t)=\frac{d}{(c+t)^b}$, 
where $b\in(0,\frac{1}{2}]$,\, 
$c\geq 1$ and $d>0$ are constants, and $d$ is sufficiently large. 
Assume that $F:H\to H$ is a monotone operator, condition \eqref{eq139} holds,
and $u_0$ is an element of $H$, 
satisfying inequality 
\begin{equation}
\label{eqx83a}
%\|u_0-V_0\|\le \frac{\|F(0)-f_\delta\|}{a(0)},\quad 
h(0)=\|F(u_0) + a(0)u_0 -f_\delta\| \le \frac{1}{4} a(0)\|V_\delta(0)\|,
\end{equation}
Assume that equation $F(u)=f$ has a solution $y\in B(u_0,R)$, possibly 
nonunique,
and $y$ is the minimal-norm solution to this equation.
%Let $f$ be unknown but $f_\delta$ be given, $\|f_\delta-f\|\le \delta$.
Then the solution $u_\delta(t)$ to problem \eqref{eq138}
exists on an interval $[0,T_\delta]$,\, $\lim_{\delta\to0}T_\delta=\infty$, 
and 
there exists $t_\delta$, $t_\delta\in (0,T_\delta)$, not necessarily 
unique, such that 
\begin{equation}
\label{eq145}
\|F(u_\delta(t_\delta))-f_\delta\|=C_1\delta^\zeta,
\quad \lim_{\delta\to 0}t_\delta=\infty,
\end{equation}
where $C_1>1$ and $0<\zeta\le 1$ are constants. If $\zeta\in (0,1)$ and 
$t_\delta$ satisfies \eqref{eq145}, then
\begin{equation}
\label{eq146}
\lim_{\delta\to 0} \|u_\delta(t_\delta) - y\|=0.
\end{equation}
\end{thm}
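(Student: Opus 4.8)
The plan is to compare $u_\delta(t)$ with the regularized solution $V_\delta(t)$ defined by \eqref{eq41.2} and to run the nonlinear‑differential‑inequality technique (the basic nonlinear inequality stated in Section~3). First I would set $w(t):=u_\delta(t)-V_\delta(t)$, $g(t):=\|w(t)\|$, and differentiate $\|w\|^2$. Since $F(V_\delta)+aV_\delta-f_\delta=0$, equation \eqref{eq138} gives $\dot w=-\big(F(u_\delta)-F(V_\delta)\big)-aw-\dot V_\delta$, and the monotonicity \eqref{eq2} (equivalently, $F+aI$ is strongly monotone with constant $a$) yields $\langle\dot w,w\rangle\le -a(t)g^2+\|\dot V_\delta\|\,g$. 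Differentiating the identity $F(V_\delta(t))+a(t)V_\delta(t)=f_\delta$ in $t$ and using that $F'(V_\delta)+aI\ge aI$ is boundedly invertible gives $\|\dot V_\delta(t)\|\le \frac{|\dot a(t)|}{a(t)}\|V_\delta(t)\|$, hence the scalar inequality $\dot g\le -a(t)g+\frac{|\dot a(t)|}{a(t)}\|V_\delta(t)\|$. Combining this with the elementary estimates $a(t)g(t)\le h(t)$, where $h(t):=\|F(u_\delta(t))+a(t)u_\delta(t)-f_\delta\|$, with $\|V_\delta(t)\|\le 2\|y\|+\delta/a(t)$, and with the initial smallness \eqref{eqx83a}, the conditions \eqref{eq140}--\eqref{eq144} of Lemma~\ref{lemma8} (together with the calibration $\lambda\ge M_1/\|y\|$) are exactly what is needed for the basic inequality to yield the a priori bound $g(t)\le a(t)/\lambda$ throughout the maximal interval of existence of $u_\delta$. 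It is cleanest to first establish this for the exact‑data trajectory $u(t)$ (using $\|V_0(t)\|\le\|y\|$, which follows from $F(V_0)+aV_0=f=F(y)$ and monotonicity), and then bound the noise perturbation by Gronwall's inequality: $\frac{d}{dt}\|u_\delta-u\|\le -a(t)\|u_\delta-u\|+\delta$ gives $\|u_\delta(t)-u(t)\|\le 2\delta/a(t)$.

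Second, from $g(t)\le a(t)/\lambda$, the bound on $\|V_\delta(t)-u_0\|$ implied by \eqref{eqx83a}, and $y\in B(u_0,R)$, one checks that $u_\delta(t)$ remains in $B(u_0,R)$ on its maximal interval; this rules out blow‑up, so $u_\delta$ exists on $[0,T_\delta]$, and choosing $T_\delta$ by, e.g., $a(T_\delta)=\delta^{\,\mu}$ with a small fixed $\mu>0$ makes $T_\delta\to\infty$ as $\delta\to0$ because $a\searrow0$. For the stopping time, put $\psi(t):=\|F(u_\delta(t))-f_\delta\|$, a continuous function with $\psi(0)=\|F(u_0)-f_\delta\|>C_1\delta^\zeta$, while $\psi(t)\le \|F(u_\delta)-F(V_\delta)\|+\|F(V_\delta)-f_\delta\|\le M_1g(t)+a(t)\|V_\delta(t)\|$, which is $<C_1\delta^\zeta$ near $T_\delta$ for $\delta$ small; by the intermediate value theorem there is $t_\delta\in(0,T_\delta)$ with $\psi(t_\delta)=C_1\delta^\zeta$. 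Finally $t_\delta\to\infty$: if $t_\delta$ stayed bounded along some sequence $\delta\to0$, then $a(t_\delta)$ would be bounded below, so by continuous dependence on $\delta$ the points $u_\delta(t_\delta)$ would converge to a point of the exact‑data trajectory at a finite time and $\psi(t_\delta)$ would stay bounded away from $0$, contradicting $\psi(t_\delta)=C_1\delta^\zeta\to0$.

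Finally, for $\zeta\in(0,1)$ I would prove \eqref{eq146} via the decomposition
\[
u_\delta(t_\delta)-y=\big(u_\delta(t_\delta)-V_\delta(t_\delta)\big)+\big(V_\delta(t_\delta)-V_0(t_\delta)\big)+\big(V_0(t_\delta)-y\big).
\]
The first term is $\le a(t_\delta)/\lambda\to0$ since $t_\delta\to\infty$; the third tends to $0$ by the standard fact that the exact‑data regularized solutions converge to the minimal‑norm solution as $a(t_\delta)\to0$ (the bound $\|V_0(t)\|\le\|y\|$, monotonicity and hemicontinuity give, via Minty's lemma, $V_0(t_\delta)\rightharpoonup y$, and the norm estimate upgrades this to strong convergence). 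The middle term is $\le\delta/a(t_\delta)$, so it remains to show $\delta/a(t_\delta)\to0$, and here $\zeta<1$ is essential: from $\psi(t_\delta)=C_1\delta^\zeta$, $M_1g(t_\delta)\le(M_1/\lambda)a(t_\delta)\le\|y\|a(t_\delta)$, and $a(t_\delta)\|V_\delta(t_\delta)\|\le 2\|y\|a(t_\delta)+\delta$, one obtains $C_1\delta^\zeta\le 3\|y\|a(t_\delta)+\delta$, hence $a(t_\delta)\ge c\,\delta^\zeta$ for all small $\delta$ and $\delta/a(t_\delta)\le c^{-1}\delta^{1-\zeta}\to0$. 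The main obstacle is the first step: establishing the uniform a priori bound $g(t)\le a(t)/\lambda$ on the a priori unknown existence interval while simultaneously controlling the noise perturbation and keeping the trajectory inside $B(u_0,R)$ — everything else (global existence, $T_\delta\to\infty$, the discrepancy balance, and the final convergence) is bootstrapped from it, and the conditions \eqref{eq140}--\eqref{eq144} with the choice of $\lambda$ in Lemma~\ref{lemma8} are designed precisely so that this bootstrap closes.
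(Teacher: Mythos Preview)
Your overall strategy coincides with the paper's: compare $u_\delta$ to $V_\delta$, derive a differential inequality for $g=\|u_\delta-V_\delta\|$, apply the nonlinear inequality (Corollary~\ref{corollary14}) with $\mu=\lambda/a$, use the intermediate value theorem to locate $t_\delta$, and finish with the three-term decomposition $u_\delta-y=(u_\delta-V_\delta)+(V_\delta-V_0)+(V_0-y)$. Two steps, however, need more care.

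\textbf{The bound $g(t)\le a(t)/\lambda$.} The inequality $\dot g\le -a g+(|\dot a|/a)\|V_\delta\|$ only feeds into Corollary~\ref{corollary14} via condition \eqref{eq143} once $\|V_\delta(t)\|$ is bounded by a constant $c_1$ independent of $t$. Since $\|V_\delta(t)\|\le\|y\|+\delta/a(t)$ and $\delta/a(t)\to\infty$, the paper does \emph{not} claim the bound on the whole maximal interval. Instead it fixes $t_0$ by $\delta/a(t_0)=\|y\|/(C-1)$ with $C:=(C_1+1)/2$, so that $\|V_\delta(t)\|\le c_1:=\|y\|(1+1/(C-1))$ on $[0,t_0]$; the inequality machinery then yields $g(t)<a(t)/\lambda$ for $t\le t_0$, and one checks $\|F(u_\delta(t_0))-f_\delta\|\le C_1\delta$, whence $t_\delta\in(0,t_0)$ exists for any $\zeta\le1$. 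Your ``exact data first, then Gronwall'' alternative gives only $g\lesssim a/\lambda+\delta/a$; the extra $\delta/a$ term contaminates the upper bound $\psi\le M_1g+a\|V_\delta\|$ with a term $M_1\delta/a$, and minimizing over $t$ yields at best $\psi\lesssim\sqrt\delta$, which fails to produce $t_\delta$ when $\zeta>1/2$.

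\textbf{The limit $t_\delta\to\infty$.} Here the paper differs substantially from your sketch. It derives a second differential inequality, now for $h(t)=\|F(u_\delta)+a u_\delta-f_\delta\|$: writing $v=F(u_\delta)-F(V_\delta)+a(u_\delta-V_\delta)$ and using $\langle A_av,v\rangle\ge a\|v\|^2$ together with $a\|u_\delta-V_\delta\|\le h$ (Lemma~\ref{lemma23}) gives $\dot h\le -(a/2)h+|\dot a|\|V_\delta\|$. Integrating and invoking Lemma~\ref{lemma25} plus the hypothesis \eqref{eqx83a} yields the \emph{lower} bound $\|F(u_\delta(t))-f_\delta\|\ge\tfrac14 a(t)\|V_\delta(t)\|$, from which $C_1\delta^\zeta\ge\tfrac14 a(t_\delta)\|V_\delta(t_\delta)\|$ and hence $a(t_\delta)\to0$. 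This is precisely where the assumption on $h(0)$ is consumed; your continuous-dependence argument never uses it, and it has a gap: you would need to exclude the possibility that the exact-data trajectory $u(t)$ satisfies $F(u(t^*))=f$ at some finite $t^*$, which is not ruled out by anything you have established.
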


\begin{rem}{\rm
One can easily choose $u_0$ satisfying  inequality \eqref{eqx83a} (see also \eqref{eq178}).
Again, inequality \eqref{eqx83a} is a sufficient condition for \eqref{eqboa}
(or \eqref{eq182}) to hold. In our proof inequality \eqref{eqboa}
is used at $t=t_\delta$.
The stopping time $t_\delta$ is often sufficiently large for 
the quantity $e^{-\varphi(t_\delta)}h_0$ to be  small. In this case  
inequality \eqref{eqboa} with $t=t_\delta$ is satisfied for a wide range of 
$u_0$. 
}
\end{rem}

\subsection{Iterative schemes} 

Let $0<a_n\searrow 0$ be a positive strictly decreasing sequence. 
Denote $V_n:=V_{n,\delta}$ where $V_{n,\delta}$ solves the following equation:
\begin{equation}
\label{eq186}
F(V_{n,\delta}) + a_n V_{n,\delta} - f_\delta = 0.
\end{equation}
Note that if $a_n:=a(t_n)$ then $V_{n,\delta}=V_\delta(t_n)$.

\subsubsection{Iterative scheme of Newton-type}

In this section we assume that $F$ is monotone operator, twice Fr\'{e}chet differentiable, and
\begin{equation}
\label{eq44zxcvb}
\| F^{(j)}(u)\| \le M_j(R,u_0),\qquad \forall u\in B(u_0,R),\quad 0\le j\le 2.
\end{equation}

Consider the following iterative scheme:
\begin{equation}
\label{eq187}
\begin{split}
u_{n+1} &= u_n - A_n^{-1}[F(u_n)+a_n u_n - f_\delta],\quad A_n:=F'(u_n)+ a_nI,\quad u_0=u_0,
\end{split}
\end{equation}
where $u_0$ is chosen so that inequality \eqref{eqchochet0} holds.
Note that $F'(u_n)\ge 0$ since $F$ is monotone. Thus, $\|A_n^{-1}\|\le\frac{1}{a_n}$.

\begin{lem}[\cite{R539} Lemma 2.5]
\label{lemma27}
Suppose $M_1, c_0$, and $c_1$ are positive constants and $0\not=y\in H$.
Then there exist $\lambda>0$ and a sequence $0<(a_n)_{n=0}^\infty\searrow 0$ such that the following conditions hold
\begin{align}
\label{eq343}
a_n &\le 2a_{n+1},\\
\label{eq344}
\|f_\delta -F(0)\| &\le \frac{a_0^2}{\lambda},\\
\label{eq345}
\frac{M_1}{\lambda}  &\le \|y\|,\\
\label{eq346}
\frac{a_n - a_{n+1}}{a_{n+1}^2} &\le \frac{1}{2c_1\lambda},\\
\label{eq347}
c_0\frac{a_n}{\lambda^2}+ \frac{a_n-a_{n+1}}{a_{n+1}}c_1 &\le \frac{a_{n+1}}{\lambda}.
\end{align}
\end{lem}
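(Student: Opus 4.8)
The plan is to construct the pair $(\lambda, (a_n))$ explicitly and then verify the five inequalities \eqref{eq343}--\eqref{eq347} one by one. I would take $a_n = \frac{d}{(c+n)^b}$ for suitable constants $b \in (0,1]$, $c > 0$, $d > 0$, mirroring the continuous-time construction used in Lemma~\ref{lemma4}; this choice makes $a_n$ positive, strictly decreasing to $0$, and gives good control on the ratio $a_n/a_{n+1}$ and on the discrete "derivative" $a_n - a_{n+1}$. First I would record the elementary estimates: $\frac{a_n}{a_{n+1}} = \left(\frac{c+n+1}{c+n}\right)^b \le \left(1 + \frac{1}{c+n}\right)^b \le 1 + \frac{b}{c}$, which for $c$ large is close to $1$ and in particular $\le 2$, yielding \eqref{eq343}; and $a_n - a_{n+1} \le \frac{bd}{(c+n)^{b+1}} \le \frac{b}{c}\, a_{n+1}\cdot\frac{a_{n+1}^{?}}{\cdots}$ — more precisely $\frac{a_n - a_{n+1}}{a_{n+1}} \le \frac{b}{c+n} \le \frac{b}{c}$ and $\frac{a_n-a_{n+1}}{a_{n+1}^2} \le \frac{b}{c}\cdot\frac{1}{a_{n+1}} \le \frac{b}{c}\cdot\frac{(c+n+1)^b}{d} \le \frac{b(c+1)^b}{cd}$ is bounded if we also keep $n$-dependence in check; since $b\le 1$ the power $(c+n+1)^b$ grows no faster than linearly, so I would instead bound $\frac{a_n - a_{n+1}}{a_{n+1}^2}$ by a constant depending on $b,c,d$ using $\sup_n \frac{1}{a_{n+1}(c+n)} = \sup_n \frac{(c+n+1)^b}{d(c+n)} \le \frac{2^b}{d}$ (for $c\ge 1$), which is where the largeness of $d$ enters.

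Next I would fix $\lambda$. Condition \eqref{eq345} reads $M_1/\lambda \le \|y\|$, i.e. $\lambda \ge M_1/\|y\|$; since $y \ne 0$ this is a genuine lower bound on $\lambda$, so I would simply choose $\lambda$ to be any number at least $M_1/\|y\|$ and then see what other constraints $\lambda$ must satisfy. Condition \eqref{eq346}, $\frac{a_n - a_{n+1}}{a_{n+1}^2} \le \frac{1}{2c_1\lambda}$, then forces $\lambda$ to be bounded \emph{above} in terms of the $(a_n)$-data and $c_1$; combined with the lower bound this is consistent provided the left side of \eqref{eq346} is small enough, which I arrange by taking $d$ large (driving $\frac{a_n - a_{n+1}}{a_{n+1}^2}$ down, as in the estimate above). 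So the logic is: first pick $b, c$; then pick $\lambda \ge M_1/\|y\|$; then pick $d$ large enough that \eqref{eq346} holds with this $\lambda$ and also large enough that \eqref{eq344}, $\|f_\delta - F(0)\| \le a_0^2/\lambda = d^2/(\lambda c^{2b})$, holds — the right side $\to \infty$ as $d\to\infty$, so this is automatic for $d$ large. Finally \eqref{eq347}, $c_0\frac{a_n}{\lambda^2} + c_1\frac{a_n - a_{n+1}}{a_{n+1}} \le \frac{a_{n+1}}{\lambda}$, I would verify by dividing through by $a_{n+1}$: it becomes $\frac{c_0}{\lambda^2}\cdot\frac{a_n}{a_{n+1}} + c_1\frac{a_n - a_{n+1}}{a_{n+1}^2}\cdot\frac{a_{n+1}}{1}\cdot\frac{1}{a_{n+1}}$ ... cleaner is to use $\frac{a_n}{a_{n+1}}\le 2$ and $\frac{a_n - a_{n+1}}{a_{n+1}} \le \frac{b}{c+n}$, so the left side is $\le \frac{2c_0 a_n}{\lambda^2} + \frac{c_1 b}{c+n}$, and we want this $\le \frac{a_{n+1}}{\lambda}$; since $a_n, a_{n+1}$ and $\frac{1}{c+n}$ are all comparable (all $\sim (c+n)^{-b}$ up to the factor $d$, except $\frac{1}{c+n}$ which for $b<1$ decays \emph{faster}), the term $\frac{c_1 b}{c+n}$ is dominated by $\frac{a_{n+1}}{2\lambda}$ once $c$ is large (using $b\le 1$), and the term $\frac{2c_0 a_n}{\lambda^2}$ is dominated by $\frac{a_{n+1}}{2\lambda}$ once $\lambda \ge 4c_0\cdot\frac{a_n}{a_{n+1}} \ge $ a fixed multiple of $c_0$, i.e. once $\lambda$ is large enough — and here lies a tension, since $\lambda$ was also bounded above by \eqref{eq346}.

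The main obstacle, therefore, is the simultaneous satisfiability of the lower bounds on $\lambda$ (from \eqref{eq345} and from the $c_0$-term of \eqref{eq347}) against the upper bound on $\lambda$ (from \eqref{eq346}). The resolution is that the upper bound on $\lambda$ from \eqref{eq346} is $\lambda \le \frac{1}{2c_1}\left(\sup_n \frac{a_n - a_{n+1}}{a_{n+1}^2}\right)^{-1}$, and this supremum can be made \emph{arbitrarily small} by taking $d$ large (it scales like $1/d$), so the admissible window for $\lambda$ is $[\,\text{fixed lower bound},\ \Omega(d)\,]$ and opens up without limit as $d\to\infty$. Hence the correct order of quantifiers is: choose $b\in(0,1]$ and $c\ge 1$ (with $c$ large enough to absorb the $\frac{c_1 b}{c+n}$ term, say $c > 4c_1 b \lambda$ after $\lambda$ is known — so really $c$ is chosen last among $b,c$, or one iterates); choose $\lambda := \max\{M_1/\|y\|,\ 4c_0\cdot 2,\ \ldots\}$ to clear all lower bounds; then choose $d$ large enough that \eqref{eq344} holds and that the upper bound from \eqref{eq346} exceeds the chosen $\lambda$. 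One then checks that with these choices all of \eqref{eq343}--\eqref{eq347} hold for every $n\ge 0$, completing the proof. I would present the verification of \eqref{eq343} and the two "automatic for large $d$" conditions \eqref{eq344}, \eqref{eq345} first as warm-ups, then handle \eqref{eq346} to pin down the $d$-$\lambda$ relationship, and finish with \eqref{eq347} as the one requiring both a large-$c$ and a large-$\lambda$ (equivalently large-$d$) input; this is essentially the discrete analogue of the argument already carried out for Lemma~\ref{lemma4} in \cite{R544}, so I would also remark that the proof parallels that one and only the discrete difference quotients replace the derivative $\dot a(t)$.
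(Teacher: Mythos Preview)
Your proposal is correct and follows essentially the same route as the paper: the paper does not prove this lemma in the text but cites \cite{R539} and records that the choice $a_n = d_0/(d+n)^b$ with $d\ge 1$, $0<b\le 1$, and $d_0$ sufficiently large satisfies \eqref{eq343}--\eqref{eq347}, which is exactly the ansatz you adopt and verify. Your identification of the only real issue---that \eqref{eq345} and the $c_0$-term in \eqref{eq347} impose lower bounds on $\lambda$ while \eqref{eq346} imposes an upper bound scaling like $d$---and its resolution by first fixing $\lambda \ge \max\{M_1/\|y\|,\,4c_0\}$ and then taking $d$ large, is precisely the mechanism that makes the construction work; just note that for $b=1$ it is the largeness of $d$, not of $c$, that controls the $c_1 b/(c+n)$ term in \eqref{eq347}, so your final order of choices (fix $b,c$; fix $\lambda$; then $d$ large) is the clean one to present.
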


It is shown in the proof of Lemma 2.5 in \cite{R539} that 
conditions \eqref{eq343}--\eqref{eq347} hold for the sequence 
$a_n=\frac{d_0}{(d+n)^b}$, where $d\ge 1,\, 0<b\le 1$, and $d_0$ is 
sufficiently large.

\begin{rem}
\label{remark7}
{\rm In Lemmas~\ref{lemma27}--\ref{lemma29}, 
one can choose $a_0$ and $\lambda$ so that $\frac{a_0}{\lambda}$ is uniformly bounded
as $\delta\to 0$ even if $M_1(R)\to\infty$ as $R\to \infty$ at an arbitrary fast rate. 
Choices of $a_0$ and $\lambda$ to satisfy this condition are discussed in \cite{R539},
\cite{R549} and \cite{R550}. 
}
\end{rem}

Let $a_n$ and $\lambda$ 
satisfy conditions \eqref{eq343}--\eqref{eq347}.
Assume that equation $F(u)=f$ has a solution $y\in B(u_0,R)$, possibly nonunique,
and $y$ is the minimal-norm solution to this equation. 
Let $f$ be unknown but $f_\delta$ be given, and $\|f_\delta-f\|\le \delta$.
We have the following result:

\begin{thm}
\label{theorem10}
Assume $a_n=\frac{d_0}{(d+n)^b}$ where $d\ge 1,\, 0<b\le 1$, and $d_0$ is sufficiently large
so that conditions \eqref{eq343}--\eqref{eq347} hold. 
Let $u_n$ be defined by \eqref{eq187}. 
Assume that $u_0$ is chosen so that 
$\|F(u_0)-f_\delta\|>C_1\delta^\gamma>\delta$ and
\begin{equation}
\label{eqchochet0}
g_0:=\|u_0-V_0\| \le \frac{\|F(0)-f_\delta\|}{a_0}.
\end{equation}
Then there exists a unique $n_\delta$, depending on $C_1$ and $\gamma$ (see below), such that
\begin{equation}
\label{eq188}
\|F(u_{n_\delta})-f_\delta\|\le C_1\delta^\gamma,\quad
C_1\delta^\gamma < \|F(u_{n})-f_\delta\|,\quad \forall n< n_\delta,
\quad 
\end{equation}
where $C_1>1,\, 0<\gamma\le 1$.

Let $0<(\delta_m)_{m=1}^\infty$ be a sequence such that $\delta_m\to 0$. 
If $N$ is a cluster point of the sequence $n_{\delta_m}$ satisfying \eqref{eq188}, then
\begin{equation}
\label{eq189}
\lim_{m\to\infty} u_{n_{\delta_m}} = u^*,
\end{equation}
where $u^*$ is a solution to the equation $F(u)=f$.
If 
\begin{equation}
\label{eq190}
\lim_{m\to \infty}n_{\delta_m}=\infty,
\end{equation}
and  $\gamma\in (0,1)$, then
\begin{equation}
\label{eq191}
\lim_{m\to \infty} \|u_{n_{\delta_m}} - y\|=0.
\end{equation}
\end{thm}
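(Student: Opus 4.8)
The plan is to mimic, at the discrete level, the analysis used for the continuous Newton-type DSM in Theorem~\ref{theorem5}, replacing the differential inequality for $h(t)=\|F(u_\delta(t))+a(t)u_\delta(t)-f_\delta\|$ by its discrete analogue for $h_n:=\|F(u_n)+a_nu_n-f_\delta\|$. First I would set up the auxiliary quantities: let $V_n$ solve \eqref{eq186} and put $g_n:=\|u_n-V_n\|$ and $h_n$ as above. Using the monotonicity of $F$ (so $F'(u_n)\ge 0$ and hence $\|A_n^{-1}\|\le 1/a_n$), Taylor expansion of $F$ to second order with the bounds \eqref{eq44zxcvb}, and the known estimate $\|V_n\|\le\|y\|$ together with $\|V_n-V_{n+1}\|\le \frac{a_n-a_{n+1}}{a_{n+1}}\|y\|$ (a standard consequence of \eqref{eq186} and monotonicity; see \cite{R499,R539}), I would derive a recursion of the form
\[
g_{n+1}\le \Big(1-\tfrac{a_n}{?}\Big)g_n + c\,\frac{g_n^2}{a_n} + \frac{a_n-a_{n+1}}{a_{n+1}}\|y\|,
\]
and show by induction, using conditions \eqref{eq343}--\eqref{eq347} and the choice $a_n=d_0/(d+n)^b$ with $d_0$ large, that $g_n\le a_n/\lambda$ for all $n$ with $u_n\in B(u_0,R)$; the initial step is exactly hypothesis \eqref{eqchochet0}. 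From $g_n\le a_n/\lambda$ and \eqref{eq345} one gets $\|u_n-y\|\le g_n+\|V_n-y\|\le a_n/\lambda+\|y\|$-type bounds keeping the iterates in $B(u_0,R)$, closing the induction.

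Next I would establish the discrepancy-principle part \eqref{eq188}. From $h_n\le \|F(u_n)-F(V_n)\|+a_n\|u_n-V_n\|\le (M_1+a_n)g_n$ and $g_n\le a_n/\lambda$, one obtains $h_n=O(a_n^2)\to 0$, while the monotonicity-based lower bound $\|F(u_n)-f_\delta\|\ge \|F(V_n)-f_\delta\|-\|F(u_n)-F(V_n)\|$ combined with the auxiliary estimate $\|F(V_n)-f_\delta\|=a_n\|V_n\|$ and a lower bound $\|V_n\|\ge c>0$ (valid because $y\neq 0$; this is where \eqref{eq345}/$0\neq y$ enters) shows $\|F(u_n)-f_\delta\|\ge a_n\|V_n\|-(M_1+a_n)g_n\ge \frac12 a_n\|V_n\|$ for large $n$. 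Since this lower bound does not depend on $\delta$ and tends to $0$, whereas by assumption $\|F(u_0)-f_\delta\|>C_1\delta^\gamma$, a smallest index $n_\delta$ with $\|F(u_{n_\delta})-f_\delta\|\le C_1\delta^\gamma$ exists; uniqueness of $n_\delta$ is immediate from its definition as the \emph{first} such index, and monotonicity of $a_n$ forces $\|V_{n_\delta}\|\ge c$, hence $C_1\delta^\gamma\ge \tfrac12 a_{n_\delta}\|V_{n_\delta}\|\ge \tfrac{c}{2}a_{n_\delta}$, giving $a_{n_\delta}\le C'\delta^\gamma\to 0$, i.e. $n_\delta\to\infty$ as $\delta\to 0$ (more precisely, $\liminf n_{\delta_m}=\infty$ along any sequence $\delta_m\to 0$).

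For the convergence statements \eqref{eq189} and \eqref{eq191} I would argue as follows. Write $\|u_{n_\delta}-y\|\le g_{n_\delta}+\|V_{n_\delta}-y\|\le a_{n_\delta}/\lambda+\|V_{n_\delta}-y\|$. If a cluster point $N<\infty$ of $n_{\delta_m}$ exists, passing to a subsequence with $n_{\delta_m}\equiv N$, continuity of $F$ and the relation $\|F(u_N)-f_{\delta_m}\|\le C_1\delta_m^\gamma\to 0$ force $F(u_N)=f$ in the limit (here $u_N$ depends on $\delta_m$ only through $f_{\delta_m}$, and $u_N$ depends continuously on the data, so $u_{n_{\delta_m}}\to u^*$ with $F(u^*)=f$), proving \eqref{eq189}. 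If instead $n_{\delta_m}\to\infty$, then $a_{n_{\delta_m}}\to 0$ so $g_{n_{\delta_m}}\to 0$; it remains to show $V_{n_{\delta_m}}\to y$, and this is precisely the content of the discrepancy-principle convergence already packaged in Theorem~\ref{theorem1}/Theorem~\ref{theorem2}: the pair $v_{\delta_m}:=V_{n_{\delta_m}}$ satisfies $\|F(v_{\delta_m})+a_{n_{\delta_m}}v_{\delta_m}-f_{\delta_m}\|=0\le\theta\delta_m$ by \eqref{eq186}, and from the first-hit property \eqref{eq188} together with the two-sided bounds on $h_n$ one verifies $C_1\delta_m^\gamma/2\le\|F(v_{\delta_m})-f_{\delta_m}\|\le 2C_1\delta_m^\gamma$ for $m$ large, so Theorem~\ref{theorem2} (applied with $\gamma\in(0,1)$) yields $V_{n_{\delta_m}}\to y$, hence $u_{n_{\delta_m}}\to y$, which is \eqref{eq191}.

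The main obstacle I anticipate is the discrete nonlinear recursion for $g_n$: unlike the ODE case, where Grönwall-type nonlinear differential inequalities (the inequality~(94) referenced in the paper) close cleanly, the discrete version requires the quadratic term $c g_n^2/a_n$ to be absorbed into the linear decay, and this is exactly what conditions \eqref{eq343} (quasi-monotonicity $a_n\le 2a_{n+1}$) and \eqref{eq346}--\eqref{eq347} are engineered to permit — but carrying out the induction $g_n\le a_n/\lambda\Rightarrow g_{n+1}\le a_{n+1}/\lambda$ needs careful bookkeeping of the constants and a verification that the perturbation term $\frac{a_n-a_{n+1}}{a_{n+1}}\|y\|$ is dominated, for the specific sequence $a_n=d_0/(d+n)^b$, by the gain from the contraction factor; getting all inequalities to point the right way with $d_0$ large is the delicate step. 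A secondary technical point is the continuous-dependence-on-data argument needed for \eqref{eq189}, which requires that $A_n^{-1}$ stays bounded uniformly (guaranteed by $\|A_n^{-1}\|\le 1/a_n$ and $n=N$ fixed).
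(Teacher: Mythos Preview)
Your outline has the right large-scale architecture, but two of the load-bearing steps are misstated and would not close as written.

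\textbf{The Newton recursion has no linear contraction term.} From \eqref{eq187} and a second-order Taylor expansion $F(u_n)-F(V_n)+a_nz_n=A_nz_n+K_n$ with $\|K_n\|\le \tfrac{M_2}{2}g_n^2$, one gets
\[
z_{n+1}=u_n-A_n^{-1}\big(A_nz_n+K_n\big)-V_{n+1}
= -A_n^{-1}K_n-(V_{n+1}-V_n),
\]
so the correct recursion (the paper's \eqref{eq196}) is
\[
g_{n+1}\le \frac{M_2}{2a_n}\,g_n^2+\|V_{n+1}-V_n\|,
\]
with \emph{no} $(1-\tfrac{a_n}{?})g_n$ term. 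Conditions \eqref{eq346}--\eqref{eq347} are tuned precisely to make the induction $g_n\le a_n/\lambda\Rightarrow g_{n+1}\le a_{n+1}/\lambda$ work for this purely quadratic recursion; with your spurious linear term the bookkeeping would not match. Moreover, the bound $\|V_{n+1}-V_n\|\le \frac{a_n-a_{n+1}}{a_{n+1}}\|y\|$ is not what holds here: since $V_n=V_{n,\delta}$ one only has $\|V_n\|\le \|y\|+\delta/a_n$, and the paper restricts to the $\delta$-dependent range $n\le n_0+1$, where $n_0$ satisfies $\delta/a_{n_0+1}>\|y\|/(C-1)\ge\delta/a_{n_0}$ (Lemma~\ref{lemma22}), so that $\|V_n\|\le \|y\|(1+\tfrac{2}{C-1})$ and $\|V_{n+1}-V_n\|\le \frac{a_n-a_{n+1}}{a_{n+1}}\|y\|(1+\tfrac{2}{C-1})$. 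The induction \eqref{eq199} is carried only on $0\le n\le n_0+1$, not for all $n$.

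\textbf{Existence of $n_\delta$ comes from an upper bound, not your lower bound.} A lower bound $\|F(u_n)-f_\delta\|\ge \tfrac12 a_n\|V_n\|\to 0$ does not imply that $\|F(u_n)-f_\delta\|$ ever drops below $C_1\delta^\gamma$. The paper instead proves directly that $\|F(u_{n_0+1})-f_\delta\|\le \tfrac{M_1 a_{n_0+1}}{\lambda}+\|F(V_{n_0+1})-f_\delta\|\le (2C-1)\delta=C_1\delta$, using \eqref{eq345}, Lemma~\ref{lemma22}, and the definition of $n_0$; since $C_1\delta\le C_1\delta^\gamma$ and $\|F(u_0)-f_\delta\|>C_1\delta^\gamma$, a first index $n_\delta\le n_0+1$ with \eqref{eq188} exists.

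For \eqref{eq191} the paper does \emph{not} route through Theorem~\ref{theorem2}. It uses the strict inequality at $n_\delta-1$ in \eqref{eq188} together with the upper bound \eqref{eq203} to obtain $C_1\delta^\gamma\le a_{n_\delta-1}(M_1/\lambda+\|y\|)+\delta$, hence $\delta/a_{n_\delta}\to 0$ (via $a_{n_\delta-1}\le 2a_{n_\delta}$). Then it splits $\|u_{n_\delta}-y\|\le g_{n_\delta}+\|V_{n_\delta}-V_{n_\delta,0}\|+\|V_{n_\delta,0}-y\|\le a_{n_\delta}/\lambda+\delta/a_{n_\delta}+\|V_{n_\delta,0}-y\|$ and invokes Lemma~\ref{lemma16}. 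Your Theorem~\ref{theorem2} detour could be made to work, but you would first have to manufacture the two-sided bound $c_1\delta^\gamma\le \|F(V_{n_\delta})-f_\delta\|\le c_2\delta^\gamma$ from information about $u_{n_\delta}$ and $u_{n_\delta-1}$, which is extra work the direct argument avoids.
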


Note that by Remark~\ref{remark8}, inequality \eqref{eqchochet0} is satisfied with $u_0=0$.

\subsubsection{An iterative scheme of gradient-type}

In this section we assume that $F$ is monotone operator, twice Fr\'{e}chet differentiable, and
\begin{equation}
\label{eq44zxcv}
\| F^{(j)}(u)\| \le M_j(R,u_0),\qquad \forall u\in B(u_0,R),\quad 0\le j\le 2.
\end{equation}

Consider the following iterative scheme:
\begin{equation}
\label{eq216}
\begin{split}
u_{n+1} &= u_n - \alpha_n A_n^*[F(u_n)+a_n u_n - f_\delta],\quad A_n:=F'(u_n)+ a_nI,\quad u_0=u_0,
\end{split}
\end{equation}
where $u_0$ is chosen so that inequality \eqref{eqchochet1} holds, and $\{\alpha_n\}_{n=1}^\infty$
is a positive sequence such that
\begin{equation}
\label{eq217}
0<\tilde{\alpha}\le \alpha_n \le \frac{2}{a_n^2 + (M_1+a_n)^2},
\qquad ||A_n||\leq M_1+a_n.
\end{equation}
It follows from this condition that
\begin{equation}
\label{eq218}
\|1-\alpha_n A_{a_n}^*A_{a_n}\|= \sup_{a_n^2\leq \lambda \leq 
(M_1+a_n)^2}|1-\alpha_n \lambda| 
\le 1 - \alpha_n a_n^2.
\end{equation} 
Note that  $F'(u_n)\ge 0$ since $F$ is monotone. 

\begin{lem}[\cite{R549} Lemma 12]
\label{lemma28}
Suppose $M_1, c_0, c_1$ and $\tilde{\alpha}$ are positive constants and $0\not=y\in H$.
Then there exist $\lambda>0$ and a sequence $0<(a_n)_{n=0}^\infty\searrow 0$ such that the following conditions hold
\begin{align}
\label{eq348}
\frac{a_n}{a_{n+1}} &\le 2,\\
\label{eq349}
\|f_\delta -F(0)\| &\le \frac{a_0^3}{\lambda},\\
\label{eq350}
\frac{M_1}{\lambda}  &\le \|y\|,\\
\label{eq351}
\frac{c_0(M_1+a_0)}{\lambda} &\le \frac{1}{2},\\
\label{eq352}
\frac{a_n^2}{\lambda}-\frac{\tilde{\alpha} a_n^4}{2\lambda} + \frac{a_n-a_{n+1}}{a_{n+1}}c_1 &\le \frac{a_{n+1}^2}{\lambda}.
\end{align}
\end{lem}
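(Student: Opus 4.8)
The plan is to construct the sequence $a_n$ explicitly in the family $a_n=\frac{d_0}{(d+n)^b}$ with $d\ge 1$, $0<b\le\frac14$, and $d_0$ large, and then to verify each of \eqref{eq348}--\eqref{eq352} in turn, following the same scheme used in the proof of Lemma~11 in \cite{R549} (our Lemma~\ref{lemma6}) but now in its discrete form. First I would fix $\lambda$: since \eqref{eq350} only demands $M_1/\lambda\le\|y\|$ and \eqref{eq351} demands $c_0(M_1+a_0)/\lambda\le\frac12$, I would choose $\lambda$ large enough (depending on $M_1,c_0,\|y\|$ and on the value $a_0=d_0/d^b$) that both hold; because $a_0$ itself depends on $d_0$, this requires choosing $\lambda$ after $d_0$, or coupling their growth as in Remark~\ref{remark7}. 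Condition \eqref{eq348}, $a_n/a_{n+1}\le 2$, is immediate for this family since $a_n/a_{n+1}=\big(\frac{d+n+1}{d+n}\big)^b\le 2^b\le 2$ for all $n\ge0$. Condition \eqref{eq349}, $\|f_\delta-F(0)\|\le a_0^3/\lambda=d_0^3/(\lambda d^{3b})$, is then arranged by taking $d_0$ sufficiently large relative to $\lambda$ and $\|f_\delta-F(0)\|$ (here one uses that $\|f_\delta-F(0)\|$ is bounded as $\delta\to0$ by, say, $\|f-F(0)\|+1$).

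The substantive step is \eqref{eq352}, which after multiplying by $\lambda$ reads
\begin{equation*}
a_n^2-\frac{\tilde{\alpha}a_n^4}{2}+\lambda c_1\frac{a_n-a_{n+1}}{a_{n+1}}\le a_{n+1}^2 .
\end{equation*}
Rearranging, this is equivalent to
\begin{equation*}
\lambda c_1\frac{a_n-a_{n+1}}{a_{n+1}}\le a_{n+1}^2-a_n^2+\frac{\tilde{\alpha}a_n^4}{2},
\end{equation*}
and since $a_{n+1}^2-a_n^2<0$ one must show the positive term $\frac{\tilde\alpha}{2}a_n^4$ dominates both the increment gap $a_n^2-a_{n+1}^2$ and the term $\lambda c_1(a_n-a_{n+1})/a_{n+1}$. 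For $a_n=d_0/(d+n)^b$ one has, by the mean value theorem, $a_n-a_{n+1}\le \frac{b\,d_0}{(d+n)^{b+1}}=\frac{b}{d+n}a_n$, hence $(a_n-a_{n+1})/a_{n+1}\le \frac{b}{d+n}\cdot\frac{a_n}{a_{n+1}}\le \frac{2b}{d+n}$ and $a_n^2-a_{n+1}^2\le 2a_n(a_n-a_{n+1})\le \frac{2b}{d+n}a_n^2$. Meanwhile $a_n^4=d_0^4/(d+n)^{4b}$ and $a_n^2/(d+n)=d_0^2/(d+n)^{2b+1}$. Because $b\le\frac14$ we have $4b\le 1\le 2b+1$, so $a_n^4$ decays no faster than $a_n^2/(d+n)$; thus choosing $d_0$ large makes $\frac{\tilde\alpha}{2}a_n^4\ge \frac{2b}{d+n}a_n^2+\lambda c_1\frac{2b}{d+n}$ uniformly in $n$ (the constant $\lambda c_1$ term is handled because $\frac{2b}{d+n}\le\frac{2b}{d}$ is bounded while $\frac{\tilde\alpha}{2}a_n^4\ge \frac{\tilde\alpha}{2}a_N^4$ on any finite range and decays only polynomially). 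This is precisely where the exponent restriction $b\in(0,\frac14]$ is used, exactly as in Lemma~\ref{lemma6}. Condition \eqref{eq352} then yields \eqref{eq351}-type bounds are already secured, and one checks the auxiliary $|\dot a|$-analogue is not needed here since the lemma as stated has no $|\dot a(t)|$ bound.

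The main obstacle is the simultaneous calibration of the two free parameters $\lambda$ and $d_0$: \eqref{eq351} wants $\lambda$ large compared with $a_0$, but $a_0$ grows with $d_0$, while \eqref{eq349} and \eqref{eq352} want $d_0$ large compared with $\lambda$. The resolution, as in \cite{R539,R549,R550} and Remark~\ref{remark7}, is to let $\lambda$ and $a_0$ (equivalently $d_0$) grow together in a controlled ratio—e.g. fix the ratio $a_0/\lambda$ small and independent of $\delta$—so that $c_0(M_1+a_0)/\lambda\to c_0 a_0/\lambda$ stays below $\frac12$ while $a_0^3/\lambda=a_0^2\cdot(a_0/\lambda)$ can still be made large by increasing $a_0$. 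Once this joint choice is fixed, the remaining verifications \eqref{eq348}--\eqref{eq350} and \eqref{eq352} are the routine polynomial estimates sketched above, and the lemma follows. $\Box$
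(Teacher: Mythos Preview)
The paper does not give its own proof of this lemma; it simply cites \cite{R549} and records that the sequence can be taken of the form $a_n=\frac{d}{(c+n)^b}$ with $c\ge 1$, $0<b\le\frac14$, and $d$ sufficiently large. Your proposal reconstructs exactly this construction and verifies \eqref{eq348}--\eqref{eq352} in the expected way, including the key observation that the restriction $b\le\frac14$ is what makes $a_n^4\sim (d+n)^{-4b}$ decay no faster than $(d+n)^{-1}$, so that the $\frac{\tilde\alpha}{2}a_n^4$ term in \eqref{eq352} can absorb the $\lambda c_1(a_n-a_{n+1})/a_{n+1}$ term. Your discussion of the joint calibration of $\lambda$ and $d_0$ via a fixed ratio $a_0/\lambda$ is also the mechanism the paper alludes to in Remark~\ref{remark7}.

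One minor point: your sentence ``the constant $\lambda c_1$ term is handled because $\frac{2b}{d+n}\le\frac{2b}{d}$ is bounded while $\frac{\tilde\alpha}{2}a_n^4\ge \frac{\tilde\alpha}{2}a_N^4$ on any finite range and decays only polynomially'' is muddled. What you actually need (and what your exponent comparison $4b\le 1$ already gives) is that the ratio $\frac{2b\lambda c_1/(d+n)}{\tilde\alpha a_n^4/2}=\frac{4b\lambda c_1}{\tilde\alpha d_0^4}(d+n)^{4b-1}$ is bounded (indeed nonincreasing) in $n$, so it suffices to make $d_0^4$ large relative to $\lambda$; since you couple $\lambda$ linearly with $a_0\sim d_0$, this holds for large $d_0$. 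With that clarification the argument is complete and matches the approach the paper attributes to \cite{R549}.
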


It is shown in the proof of Lemma~\ref{lemma28} in \cite{R549} that
the sequence satisfying conditions \eqref{eq348}--\eqref{eq352} 
can be chosen of the form $a_n=\frac{d}{(c+n)^b}$, where $c\ge 1,\, 0<b\le 
\frac{1}{4}$, and $d$ is sufficiently large.

Assume that equation $F(u)=f$ has a solution in $B(u_0,R)$, possibly nonunique,
and $y$ is the minimal-norm solution to this equation. 
Let $f$ be unknown but $f_\delta$ be given, and $\|f_\delta-f\|\le \delta$.
We prove the following result:

\begin{thm}
\label{theorem11}
Assume $a_n=\frac{d}{(c+n)^b}$ where $c\ge 1,\, 0<b\le \frac{1}{4}$, and $d$ is sufficiently large
so that conditions \eqref{eq348}--\eqref{eq352} hold. 
Let $u_n$ be defined by \eqref{eq216}. Assume that $u_0$ is chosen so that 
$\|F(u_0)-f_\delta\|>C_1\delta^\gamma>\delta$ and
\begin{equation}
\label{eqchochet1}
g_0:=\|u_0-V_0\| \le \frac{\|F(0)-f_\delta\|}{a_0}.
\end{equation}
Then there exists a unique $n_\delta$ such that
\begin{equation}
\label{eq219}
\|F(u_{n_\delta})-f_\delta\|\le C_1\delta^\zeta,\quad
C_1\delta^\zeta < \|F(u_{n})-f_\delta\|,\quad \forall n< n_\delta,
\quad 
\end{equation}
where $C_1>1,\, 0<\zeta\le 1$.

Let $0<(\delta_m)_{m=1}^\infty$ be a sequence such that $\delta_m\to 0$. 
If the sequence $\{n_m:=n_{\delta_m}\}_{m=1}^\infty$ is bounded, and $\{n_{m_j}\}_{j=1}^\infty$ 
is a convergent subsequence, then
\begin{equation}
\label{eq220}
\lim_{j\to\infty} u_{n_{m_j}} = \tilde{u},
\end{equation}
where $\tilde{u}$ is a solution to the equation $F(u)=f$.
If 
\begin{equation}
\label{eq221}
\lim_{m\to \infty}n_m=\infty,
\end{equation}
and  $\zeta\in (0,1)$, then
\begin{equation}
\label{eq222}
\lim_{m\to \infty} \|u_{n_m} - y\|=0.
\end{equation}
\end{thm}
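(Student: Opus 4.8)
The plan is to follow the template established for the continuous DSM theorems (Theorems \ref{theorem5}, \ref{theorem7}, \ref{theorem9}), adapted to the discrete gradient iteration \eqref{eq216}. The central object is the discrepancy between the iterate $u_n$ and the regularized solution $V_n$ of \eqref{eq186}: set $w_n := u_n - V_n$ and $g_n := \|w_n\|$. First I would derive a recursive inequality for $g_n$. Writing the iteration as $u_{n+1} = u_n - \alpha_n A_n^*[F(u_n)+a_n u_n - f_\delta]$ and subtracting $V_{n+1}$, I split $u_{n+1} - V_{n+1} = (u_{n+1} - V_n) + (V_n - V_{n+1})$. The first difference is handled by monotonicity of $F$ and the spectral bound \eqref{eq218}: using $F(u_n) + a_n u_n - f_\delta = F(u_n) - F(V_n) + a_n w_n$ and the identity $A_n^*[F(u_n)-F(V_n)] = A_n^* A_{a_n} w_n + A_n^*[F(u_n)-F(V_n)-F'(u_n)w_n]$, the Taylor remainder is $O(M_2 g_n^2)$ by \eqref{eq44zxcv}, so that $\|u_{n+1}-V_n\| \le (1-\alpha_n a_n^2) g_n + \alpha_n(M_1+a_n) \tfrac{M_2}{2} g_n^2$. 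The second difference $\|V_n - V_{n+1}\|$ is bounded by $\tfrac{|a_n - a_{n+1}|}{a_{n+1}}\|V_{n+1}\|$ (a standard estimate for solutions of \eqref{eq186}, using that $\|V_n\|$ is bounded by $\|y\|$ up to lower-order terms). Combining these and invoking conditions \eqref{eq348}--\eqref{eq352}, I expect to obtain an inequality of the form
\begin{equation*}
g_{n+1} \le \Big(1 - \tfrac{\tilde\alpha a_n^4}{2}\Big) g_n + c\, \frac{a_n - a_{n+1}}{a_{n+1}} + (\text{nonlinear term in } g_n),
\end{equation*}
which, together with the discrete nonlinear inequality from Section 5 (the discrete analog of \eqref{eq94}, with the initial condition guaranteed by \eqref{eqchochet1}), yields $g_n \le \tfrac{a_n}{\lambda}$ for all $n$ for which the iteration stays in $B(u_0,R)$ — and simultaneously shows the iterates remain in that ball so no blow-up occurs. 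This uniform bound $g_n = \|u_n - V_n\| \le a_n/\lambda$ is the workhorse estimate.

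Next I would establish existence and basic properties of the stopping index $n_\delta$ defined by \eqref{eq219}. From $g_n \le a_n/\lambda$ and the elementary bound $\|F(V_n)-f_\delta\| = a_n\|V_n\| \le a_n\|y\| + $ (correction), together with $\|F(u_n)-f_\delta\| \le \|F(V_n)-f_\delta\| + M_1 g_n$, one gets $\|F(u_n)-f_\delta\| \to 0$ as $n \to \infty$ along any fixed $\delta$ (more precisely, $\|F(u_n)-f_\delta\| \le a_n(\|y\| + o(1)) + \delta$-dependent terms). On the other hand $\|F(u_0)-f_\delta\| > C_1\delta^\gamma$ by hypothesis. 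Since $\|F(u_n)-f_\delta\|$ cannot be made smaller than $\asymp \delta$ but does descend below $C_1\delta^\gamma$ eventually, a first index $n_\delta$ with $\|F(u_{n_\delta})-f_\delta\| \le C_1\delta^\gamma$ exists; uniqueness is just the definition of "first such index". The relation $a_{n_\delta} \ge c\,\delta^\gamma$ (lower bound on $a$ at the stopping index), obtained from the lower inequality in \eqref{eq219} at $n = n_\delta - 1$ combined with $a_n \le 2a_{n+1}$ from \eqref{eq348} and the estimate $\|F(u_{n_\delta-1})-f_\delta\| \le C\,a_{n_\delta - 1}$, is the key quantitative fact. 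Since $a_n \to 0$ and $\delta^\gamma \to 0$, this forces $n_\delta \to \infty$ as $\delta \to 0$, giving the second assertion in \eqref{eq219}, and also $\lim_{\delta\to 0} a_{n_\delta} = 0$ because $a_{n_\delta} \le a_{n_\delta - 1} \le $ (something $\to 0$) via the upper bound $\|F(u_{n_\delta -1})-f_\delta\|> C_1\delta^\gamma$ cannot by itself give this, so one argues instead that if $a_{n_\delta}$ stayed bounded below, $n_\delta$ would be bounded, contradicting the need to drop below $C_1\delta^\gamma$.

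For the convergence claims, I would argue exactly as in the proof of Theorem \ref{theorem10}. For \eqref{eq220}: if $\{n_{m_j}\}$ converges to a finite limit $N$, then $u_{n_{m_j}} \to u_N$ (for the limiting data $f$), $\|F(u_N) - f\| = \lim_j \|F(u_{n_{m_j}}) - f_{\delta_{m_j}}\| \le \lim_j C_1\delta_{m_j}^\zeta = 0$, so $\tilde u := u_N$ solves $F(u)=f$. For \eqref{eq222}: when $n_m \to \infty$, combine $\|u_{n_m} - y\| \le \|u_{n_m} - V_{n_m}\| + \|V_{n_m} - y\| \le a_{n_m}/\lambda + \|V_{n_m} - y\|$. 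The first term $\to 0$. For the second, $V_n = V_{\delta,a_n}$ with $a_n = a_{n_m} \to 0$ and noise level $\delta_m \to 0$; the discrepancy relation forces $\|F(V_{n_m}) - f_\delta\| \asymp \delta^\zeta$ with $\zeta < 1$ (this is where $\zeta \in (0,1)$ is essential — it makes $\delta/a_{n_\delta} \to 0$, analogous to the $\gamma < 1$ hypothesis in Theorem \ref{thm1}), so Theorem \ref{thm1}-type reasoning (or the abstract convergence of the Tikhonov-regularized solution to the minimal-norm solution when the regularization parameter dominates the noise) gives $V_{n_m} \to y$.

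The main obstacle I anticipate is the first step: closing the recursion for $g_n = \|u_n - V_n\|$ while simultaneously controlling that the iterates never leave $B(u_0, R)$ (so that \eqref{eq44zxcv} and \eqref{eq217} remain applicable) and that the nonlinear term $\alpha_n(M_1 + a_n)\tfrac{M_2}{2} g_n^2$ is genuinely absorbed by the linear contraction factor $1 - \tilde\alpha a_n^4/2$. Because the contraction gain is only $O(a_n^4)$ — quite weak, reflecting the adjoint (gradient) structure rather than the Newton structure — the margins in \eqref{eq348}--\eqref{eq352} are tight, and the induction proving $g_n \le a_n/\lambda$ must be done carefully, propagating the bound through the discrete nonlinear inequality machinery of Section 5 rather than by a naive direct estimate. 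Once that uniform bound is in hand, everything else follows the now-standard pattern.
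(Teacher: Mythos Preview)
Your overall strategy matches the paper's proof: derive a recursion for $g_n=\|u_n-V_n\|$ via Taylor expansion and the spectral estimate \eqref{eq218}, bound $\|V_n-V_{n+1}\|$ using Lemma~\ref{lemma22}, close an induction on $g_n$, then run the standard three-term splitting $\|u_{n_m}-y\|\le g_{n_m}+\|V_{n_m}-V_{n_m,0}\|+\|V_{n_m,0}-y\|$. However, there is one genuine gap.

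Your claimed invariant $g_n\le a_n/\lambda$ is too weak and the induction will not close with it. With $g_n\sim a_n/\lambda$ the quadratic Taylor remainder contributes
\[
\alpha_n(M_1+a_n)\tfrac{M_2}{2}g_n^2 \;\asymp\; \tfrac{a_n^2}{\lambda^2},
\]
whereas the contraction gain from $(1-\alpha_n a_n^2)g_n$ is only $\alpha_n a_n^2\cdot g_n\asymp a_n^3/\lambda$; for small $a_n$ the remainder dominates and the bound cannot be propagated. The paper instead proves $g_n\le a_n^2/\lambda$ (note the square), by direct induction using \eqref{eq351} and \eqref{eq352}: with $g_n\le a_n^2/\lambda$ the remainder is $\asymp a_n^4/\lambda^2$, which balances the gain $\alpha_n a_n^2\cdot a_n^2/\lambda\asymp a_n^4/\lambda$, and condition \eqref{eq351} supplies exactly the constant needed to absorb it. The initial step $g_0\le a_0^2/\lambda$ comes from \eqref{eqchochet1} combined with \eqref{eq349} (see Remark~\ref{remark8}, equation \eqref{eq361}), not from the general discrete inequality of Theorem~\ref{548lemma16}.

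Two smaller points. First, $n_\delta\to\infty$ is \emph{not} asserted and need not hold; the theorem explicitly treats the bounded case \eqref{eq220} separately, so your sentence ``this forces $n_\delta\to\infty$'' should be deleted. Second, the existence of $n_\delta$ is obtained in the paper by fixing $n_0$ via $\frac{\delta}{a_{n_0+1}}>\frac{\|y\|}{C-1}\ge\frac{\delta}{a_{n_0}}$ (Lemma~\ref{lemma22}) and showing $\|F(u_{n_0+1})-f_\delta\|\le C_1\delta$; your ``eventually descends'' argument is on the right track but should be made quantitative in this way so that the crucial estimate $\delta/a_{n_\delta}\to 0$ can be read off from $C_1\delta^\zeta\le \text{const}\cdot a_{n_\delta-1}$ together with \eqref{eq348}.
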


It is pointed out in Remark~\ref{remark8} that inequality 
\eqref{eqchochet1} is satisfied with $u_0=0$.

\subsubsection{A simple iteration method}

In this section we assume that $F$ is monotone operator, Fr\'{e}chet differentiable.

Consider the following iterative scheme:
\begin{equation}
\label{eq247}
\begin{split}
u_{n+1} &= u_n - \alpha_n [F(u_n)+a_n u_n - f_\delta],\quad u_0=u_0,
\end{split}
\end{equation}
where $u_0$ is chosen so that inequality \eqref{eqchochet2} holds, and $\{\alpha_n\}_{n=1}^\infty$
is a positive sequence such that
\begin{equation}
\label{eq248}
0<\tilde{\alpha}\le \alpha_n \le \frac{2}{a_n + (M_1+a_n)},
\qquad M_1(u_0,R) = \sup_{u\in B(u_0,R)}\|F'(u)\|.
\end{equation}
It follows from this condition that
\begin{equation}
\label{eq249}
\|1-\alpha_n (J_n+a_n)\|= \sup_{a_n \leq \lambda \leq 
M_1+a_n}|1-\alpha_n \lambda| 
\le 1 - \alpha_n a_n.
\end{equation} 
Here, $J_n$ is an operator in $H$ such that $J_n\ge0$ and 
$\|J_n\|\le M_1,\, \forall u\in B(u_0,R)$. A specific choice
of  $J_n$ is made in formula  \eqref{eq256} below.

\begin{lem}
[\cite{R550} Lemma 12]
\label{lemma29}
Suppose $M_1$, $c_1$ and $\tilde{\alpha}$ are positive constants and 
$0\not=y\in H$.
Then there exist a number $\lambda>0$ and a sequence 
$0<(a_n)_{n=0}^\infty\searrow 0$ such that the following conditions hold
\begin{align}
\label{eq353}
\frac{a_n}{a_{n+1}} &\le 2,\\
\label{eq354}
\|f_\delta -F(0)\| &\le \frac{a_0^2}{\lambda},\\
\label{eq355}
\frac{M_1}{\lambda}  &\le \|y\|,\\
\label{eq356}
\frac{a_n}{\lambda}-\frac{\tilde{\alpha} a_n^2}{\lambda} + \frac{a_n-a_{n+1}}{a_{n+1}}c_1 &\le \frac{a_{n+1}}{\lambda}.
\end{align}
\end{lem}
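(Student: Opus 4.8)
The plan is to mimic the construction used in the proofs of Lemmas~\ref{lemma4}, \ref{lemma6}, and \ref{lemma8}: pick the sequence in the explicit form $a_n=\frac{d}{(c+n)^b}$ with $c\ge 1$ and $0<b\le\frac12$, and then show that, for $d$ large enough and $\lambda$ large enough, all four conditions \eqref{eq353}--\eqref{eq356} hold. Accordingly I would first record the two elementary monotonicity estimates this choice provides: since $a_n$ is decreasing, $\frac{a_n}{a_{n+1}}=\left(\frac{c+n+1}{c+n}\right)^b\le 2^b\le 2$, which is \eqref{eq353}; and $\frac{a_n-a_{n+1}}{a_{n+1}}=\left(\frac{c+n+1}{c+n}\right)^b-1\le \frac{b}{c+n}$ by the mean value theorem, so this ratio is $O\!\left(\frac{1}{c+n}\right)$ and, crucially, comparable to $a_n$ up to a constant depending only on $b,c,d$ — this is the estimate that will feed \eqref{eq356}.

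Next I would dispose of \eqref{eq355} and \eqref{eq354}. Condition \eqref{eq355}, $\frac{M_1}{\lambda}\le\|y\|$, is just the requirement $\lambda\ge \frac{M_1}{\|y\|}$ (legitimate since $y\ne 0$), so I fix $\lambda$ at least that large; note this is the same normalization appearing in \eqref{eq141}, \eqref{eq91}, \eqref{eq45}. Condition \eqref{eq354}, $\|f_\delta-F(0)\|\le \frac{a_0^2}{\lambda}$, is handled as in the earlier lemmas: $a_0=\frac{d}{c^b}$, so $\frac{a_0^2}{\lambda}=\frac{d^2}{\lambda c^{2b}}$ grows quadratically in $d$ while the left side is a fixed (bounded) quantity, hence choosing $d$ sufficiently large (for fixed $\lambda$, or coordinating $d$ and $\lambda$ as in Remark~\ref{remark7}) makes it hold. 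One may also invoke the boundedness hypothesis on $\|f_\delta-F(0)\|$ implicit in the setup, exactly as in \eqref{eq48}, \eqref{eqx83}.

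The one substantive inequality is \eqref{eq356}: $\frac{a_n}{\lambda}-\frac{\tilde\alpha a_n^2}{\lambda}+\frac{a_n-a_{n+1}}{a_{n+1}}c_1\le \frac{a_{n+1}}{\lambda}$, equivalently $\frac{a_n-a_{n+1}}{\lambda}+\frac{a_n-a_{n+1}}{a_{n+1}}c_1\le \frac{\tilde\alpha a_n^2}{\lambda}$. Using $a_n-a_{n+1}\le \frac{b}{c+n}a_{n+1}\le \frac{b}{c+n}a_n$ and $a_n=\frac{d}{(c+n)^b}$, the left side is at most $\left(\frac{1}{\lambda}+\frac{c_1}{a_{n+1}}\right)\frac{b}{c+n}a_n$; since $\frac{a_n}{a_{n+1}}\le 2$, this is $\le \frac{b}{c+n}a_n\left(\frac{1}{\lambda}+\frac{2c_1}{a_n}\right)=\frac{b}{c+n}\left(\frac{a_n}{\lambda}+2c_1\right)$. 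Meanwhile the right side is $\frac{\tilde\alpha a_n^2}{\lambda}=\frac{\tilde\alpha d^2}{\lambda (c+n)^{2b}}$. Because $b\le\frac12$ we have $\frac{1}{c+n}\le \frac{1}{(c+n)^{2b}}$, so after multiplying through by $(c+n)^{2b}$ it suffices to have $b\left(\frac{a_n}{\lambda}+2c_1\right)\le \frac{\tilde\alpha d^2}{\lambda}$, and since $\frac{a_n}{\lambda}\le\frac{a_0}{\lambda}$ is bounded, this holds once $d$ is taken large enough (again coordinating with $\lambda$ per Remark~\ref{remark7} so that $\frac{a_0}{\lambda}$ stays bounded). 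I expect this last estimate to be the only place where real care is needed: one must make sure the exponent bookkeeping genuinely uses $b\le\frac12$ (rather than $b\le\frac14$ as in the gradient-type Lemma~\ref{lemma28}), since here $a_n$ — not $a_n^2$ — multiplies $\tilde\alpha$, so only one power of $a_n$ is available to absorb the $\frac{|\dot a|}{a}$-type term $\frac{a_n-a_{n+1}}{a_{n+1}}\sim\frac{b}{c+n}$. Collecting the finitely many lower bounds on $d$ (and the lower bound on $\lambda$) obtained along the way and taking $d$ larger than all of them completes the construction, and the decay $0<a_n\searrow 0$ together with the auxiliary bound $|\dot a(t)|\le \frac{a^2(t)}{2}$ in \eqref{eq140} (the continuous interpolation $a(t)=\frac{d}{(c+t)^b}$) is immediate from the same computation, $|\dot a|=\frac{bd}{(c+t)^{b+1}}\le \frac{a^2}{2}$ whenever $(c+t)^{b-1}\le\frac{d}{2b}$, i.e. for all $t\ge0$ when $d\ge 2bc^{\,b-1}$, $b\le 1$.
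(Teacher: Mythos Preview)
Your proposal is correct and matches the paper's approach exactly: the paper does not supply a proof here but cites \cite{R550} and states that the sequence $a_n=\frac{d}{(c+n)^b}$ with $c\ge 1$, $0<b\le\tfrac12$, and $d$ sufficiently large works; your argument fills in precisely these details. One minor remark: your informal claim that $\frac{a_n-a_{n+1}}{a_{n+1}}$ is ``comparable to $a_n$'' is slightly off---in fact for $b\le\tfrac12$ it is bounded by a constant times $a_n^2$, which is exactly what you use (and need) in the verification of \eqref{eq356}.
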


It is shown in the proof of Lemma 12 in \cite{R550} that conditions 
\eqref{eq353}--\eqref{eq356} hold for
the sequence $a_n=\frac{d}{(c+n)^b}$, where $c\ge 1,\, 0<b\le 
\frac{1}{2}$, and $d$ is sufficiently large.

Let $a_n$ and $\lambda$ 
satisfy conditions \eqref{eq353}--\eqref{eq356}.
Assume that equation $F(u)=f$ has a solution $y\in B(u_0,R)$, possibly nonunique,
and $y$ is the minimal-norm solution to this equation. 
Let $f$ be unknown but $f_\delta$ be given, and $\|f_\delta-f\|\le \delta$.
We prove the following result:

\begin{thm}
\label{theorem12}
Assume $a_n=\frac{d}{(c+n)^b}$ where $c\ge 1,\, 0<b\le \frac{1}{2}$, and $d$ is sufficiently large
so that conditions \eqref{eq353}--\eqref{eq356} hold. 
Let $u_n$ be defined by \eqref{eq247}. 
Assume that $u_0$ is chosen so that 
$\|F(u_0)-f_\delta\|>C_1\delta^\gamma>\delta$ and
\begin{equation}
\label{eqchochet2}
g_0:=\|u_0-V_0\| \le \frac{\|F(0)-f_\delta\|}{a_0}.
\end{equation}
Then there exists a unique $n_\delta$ such that
\begin{equation}
\label{eq250}
\|F(u_{n_\delta})-f_\delta\|\le C_1\delta^\zeta,\quad
C_1\delta^\zeta < \|F(u_{n})-f_\delta\|,\quad \forall n< n_\delta,
\quad 
\end{equation}
where $C_1>1,\, 0<\zeta\le 1$.

Let $0<(\delta_m)_{m=1}^\infty$ be a sequence such that $\delta_m\to 0$. 
If the sequence $\{n_m:=n_{\delta_m}\}_{m=1}^\infty$ is bounded, and $\{n_{m_j}\}_{j=1}^\infty$ 
is a convergent subsequence, then
\begin{equation}
\label{eq251}
\lim_{j\to\infty} u_{n_{m_j}} = \tilde{u},
\end{equation}
where $\tilde{u}$ is a solution to the equation $F(u)=f$.
If 
\begin{equation}
\label{eq252}
\lim_{m\to \infty}n_m=\infty,
\end{equation}
and  $\zeta\in (0,1)$, then
\begin{equation}
\label{eq253}
\lim_{m\to \infty} \|u_{n_m} - y\|=0.
\end{equation}
\end{thm}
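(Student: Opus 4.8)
The plan is to mimic the structure used for the Newton-type iterative scheme in Theorem~\ref{theorem10} and the gradient scheme in Theorem~\ref{theorem11}, but with the simpler iteration operator. First I would introduce the auxiliary quantities $V_n=V_{n,\delta}$ solving $F(V_n)+a_nV_n-f_\delta=0$ and set $g_n:=\|u_n-V_n\|$. Using monotonicity of $F$ and the fact that the solution $V_n$ of the regularized equation depends on $n$ only through $a_n$, I would first record the a priori estimates $\|V_n\|\le\|y\|$ and $\|V_n-V_{n+1}\|\le \frac{|a_n-a_{n+1}|}{a_{n+1}}\|V_{n+1}\|\le c_1\frac{a_n-a_{n+1}}{a_{n+1}}$ (the constant $c_1$ absorbing $\|y\|$), which follow from standard properties of solutions to equations with monotone operators. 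These are the estimates against which conditions \eqref{eq353}--\eqref{eq356} are tailored.

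The core is a recursive inequality for $g_n$. Subtracting the identity $V_{n+1}=V_{n+1}$ (rewritten via $F(V_n)+a_nV_n-f_\delta=0$) from the iteration \eqref{eq247} and using the representation $F(u_n)-F(V_n)=J_n(u_n-V_n)$ with the specific $J_n\ge0$, $\|J_n\|\le M_1$ chosen in \eqref{eq256}, I get
\begin{equation*}
u_{n+1}-V_n = \big(I-\alpha_n(J_n+a_nI)\big)(u_n-V_n),
\end{equation*}
so that by \eqref{eq249}, $\|u_{n+1}-V_n\|\le(1-\alpha_n a_n)g_n$. Then the triangle inequality with $\|V_n-V_{n+1}\|$ gives
\begin{equation*}
g_{n+1}\le (1-\alpha_n a_n)g_n + c_1\frac{a_n-a_{n+1}}{a_{n+1}}.
\end{equation*}
Feeding in $\alpha_n\ge\tilde\alpha$ and condition \eqref{eq356}, one shows by induction (using \eqref{eq354} for the base case $g_0\le a_0/\lambda$, which is exactly \eqref{eqchochet2} combined with \eqref{eq354}) that $g_n\le a_n/\lambda$ for all $n$, hence in particular $u_n\in B(u_0,R)$ for a suitable $R$, so the assumption \eqref{eq139}/\eqref{eq248} and the solution $y\in B(u_0,R)$ are consistently usable. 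This gives $\|u_n-y\|\le g_n+\|V_n-y\|$, and since $\|V_n-y\|\to0$ as $a_n\to0$ (a known property of the regularized solution when $\delta=0$; here one uses $\|f_\delta-f\|\le\delta$ and keeps the $\delta/a_n$ term), one obtains a bound of the form $\|F(u_n)+a_nu_n-f_\delta\|\le$ (something $\to0$), leading to control of $\|F(u_n)-f_\delta\|$.

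For the discrepancy principle I would show that $n\mapsto\|F(u_n)-f_\delta\|$ cannot stay above $C_1\delta^\zeta$ forever: combining $g_n\le a_n/\lambda$ with the estimate $\|F(u_n)-f_\delta\|\le\|F(V_n)-f_\delta\|+M_1 g_n\le a_n\|V_n\|+M_1 a_n/\lambda\le a_n(\|y\|+M_1/\lambda)\to0$ shows the discrepancy eventually drops below $C_1\delta^\zeta$, while at $n=0$ it exceeds $C_1\delta^\gamma$ by hypothesis; the first such index is $n_\delta$, and monotone-type one-step estimates give uniqueness and $\|F(u_{n_\delta})-f_\delta\|\le C_1\delta^\zeta$. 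That $n_\delta\to\infty$ as $\delta\to0$ follows because for fixed $n$ the discrepancy $\|F(u_n)-f_\delta\|$ has a positive limit as $\delta\to0$ (it converges to $\|F(u_n^{(0)})-f\|$ along the noise-free iteration, which is $>C_1\delta^\zeta$ eventually only if that limit is $0$, forcing $n$ large). Finally, for the convergence \eqref{eq253} when $\zeta\in(0,1)$ and $n_m\to\infty$: at $n=n_\delta$ one has $\|F(u_{n_\delta})+a_{n_\delta}u_{n_\delta}-f_\delta\|\le C_1\delta^\zeta+a_{n_\delta}\|u_{n_\delta}\|$, and this plays the role of the hypothesis \eqref{eq25} of Theorem~\ref{theorem2} with $v_\delta=u_{n_\delta}$, while \eqref{eq250} supplies the two-sided discrepancy bound \eqref{eq26}; hence Theorem~\ref{theorem2} yields $\|u_{n_\delta}-y\|\to0$. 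For the case of bounded $\{n_m\}$ (the statement \eqref{eq251}), along a convergent subsequence $n_{m_j}\to N$ one passes to the limit in the iteration and in $\|F(u_{n_{m_j}})-f_{\delta_{m_j}}\|\le C_1\delta_{m_j}^\zeta\to0$ to conclude $F(\tilde u)=f$.

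The main obstacle I anticipate is the bookkeeping that makes the induction $g_n\le a_n/\lambda$ close: one must verify that the constant $c_1$ chosen to dominate $\|V_n-V_{n+1}\|a_{n+1}/(a_n-a_{n+1})$ is compatible with the $c_1$ appearing in \eqref{eq356}, and that the $\delta$-dependent perturbation (the difference between the noisy and noise-free regularized solutions, of size $O(\delta/a_n)$) does not spoil either the induction for $n\le n_\delta$ or the argument that $n_\delta\to\infty$. In particular, showing that all iterates remain in $B(u_0,R)$ — so that \eqref{eq139} is legitimately applicable throughout — requires that $R$ be chosen large enough a priori in terms of $\|y\|$, $M_1$, $\lambda$, $c$, $d$, $b$; this is routine but must be stated carefully. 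Everything else reduces to the same monotone-operator inequalities already used in Theorems~\ref{theorem10} and \ref{theorem11}, together with the discrete analogue of the nonlinear inequality underlying Lemma~\ref{lemma29}.
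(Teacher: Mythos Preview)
Your overall plan is the right one and matches the paper through the recursive estimate $g_{n+1}\le(1-\alpha_na_n)g_n+c_1\frac{a_n-a_{n+1}}{a_{n+1}}$ and the induction $g_n\le a_n/\lambda$. Two points, however, need correction.

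\textbf{The induction is only valid up to a $\delta$-dependent cutoff.} You assert $g_n\le a_n/\lambda$ ``for all $n$'' and then let $n\to\infty$ to make the discrepancy tend to $0$. But the constant $c_1$ bounding $\|V_n-V_{n+1}\|a_{n+1}/(a_n-a_{n+1})$ comes from the estimate $\|V_n\|\le\|y\|+\delta/a_n$ (not $\|V_n\|\le\|y\|$ as you wrote), so $c_1$ stays fixed only while $\delta/a_n$ is bounded. The paper fixes this by introducing the index $n_0$ with $\delta/a_{n_0+1}>\|y\|/(C-1)\ge\delta/a_{n_0}$ (Lemma~\ref{lemma22}), runs the induction only for $0\le n\le n_0+1$, and then shows directly that $\|F(u_{n_0+1})-f_\delta\|\le C_1\delta$, which yields existence of $n_\delta\le n_0+1$. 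Your argument can be repaired in the same way, but as written it overshoots.

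\textbf{The appeal to Theorem~\ref{theorem2} for \eqref{eq253} does not go through.} At $n=n_\delta$ the stopping rule \eqref{eq250} gives only the \emph{upper} bound $\|F(u_{n_\delta})-f_\delta\|\le C_1\delta^\zeta$; there is no lower bound of the form $C_1\delta^\zeta\le\|F(u_{n_\delta})-f_\delta\|$, so hypothesis \eqref{eq26} of Theorem~\ref{theorem2} is not available. Likewise, the bound $\|F(u_{n_\delta})+a_{n_\delta}u_{n_\delta}-f_\delta\|\le C_1\delta^\zeta+a_{n_\delta}\|u_{n_\delta}\|$ is not of the form $\theta\delta$ required by \eqref{eq25}, since neither $\delta^\zeta$ nor $a_{n_\delta}$ is a priori $O(\delta)$. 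The paper avoids Theorem~\ref{theorem2} entirely here: it uses the \emph{lower} discrepancy bound at $n_\delta-1$ together with $\|F(u_{n_\delta-1})-f_\delta\|\le M_1a_{n_\delta-1}/\lambda+a_{n_\delta-1}\|y\|+\delta$ to obtain $\tilde C\delta^\zeta\le a_{n_\delta-1}(M_1/\lambda+\|y\|)$, whence $\delta/a_{n_\delta}\to0$ (using $a_{n_\delta-1}\le 2a_{n_\delta}$). The proof of \eqref{eq253} then finishes with the triangle inequality
\[
\|u_{n_m}-y\|\le g_{n_m}+\|V_{n_m}-V_{n_m,0}\|+\|V_{n_m,0}-y\|\le \frac{a_{n_m}}{\lambda}+\frac{\delta_m}{a_{n_m}}+\|V_{n_m,0}-y\|,
\]
and Lemma~\ref{lemma16}. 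Replace your invocation of Theorem~\ref{theorem2} with this direct argument. (Incidentally, $n_\delta\to\infty$ is not asserted by the theorem and need not hold; the two cases bounded/unbounded are handled separately, as you do at the end.)
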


According to Remark~\ref{remark8}, inequality \eqref{eqchochet2} is 
satisfied with $u_0=0$.

\subsection{Nonlinear inequalities}

\subsubsection{A nonlinear differential inequality}

In \cite{R499} the following differential inequality 
\begin{equation}
\label{eq278x}
\dot{g}(t) \le -\gamma(t)g(t) + \alpha(t)g^2(t) + \beta(t),\qquad t\ge \tau_0,
\end{equation}
was studied and applied to various evolution problems. In \eqref{eq278x} 
$\alpha(t),\beta(t),\gamma(t)$ and $g(t)$ are continuous nonnegative 
functions on $[\tau_0,\infty)$ where $\tau_0$
is a fixed number. In \cite{R499}, an upper bound for $g(t)$ is 
obtained  under some conditions on 
$\alpha,\beta,\gamma$. In \cite{R558} the following generalization 
of \eqref{eq278x}:
\begin{equation}
\label{eq278}
\dot{g}(t) \le -\gamma(t)g(t) + \alpha(t)g^p(t) + \beta(t),\qquad t\ge 
\tau_0,\quad p>1,
\end{equation}
is studied. 

We have the following result:
\begin{thm}[\cite{R558} Theorem 1]
\label{theorem13}
%Let $g(t)$ be a arbitrary solution to inequality \eqref{eq278}. 
Let $\alpha(t),\beta(t)$ and $\gamma(t)$ be continuous 
functions on $[\tau_0,\infty)$ and $\alpha(t)>0,\forall t\ge \tau_0$.
Suppose there exists a function $\mu(t)>0$, $\mu\in C^1[\tau_0,\infty)$,
such that
\begin{align}
\label{eq279}
\frac{\alpha(t)}{\mu^p(t)}+ \beta(t) &\le \frac{1}{\mu(t)}\bigg{[}\gamma -
\frac{\dot{\mu}(t)}{\mu(t)}\bigg{]}.
%\\\label{eq280}
%\mu(\tau_0)g(\tau_0)    &< 1,
\end{align}
%where $\alpha(t),\beta(t)$ and $\gamma(t)$ are continuous 
%functions on $[\tau_0,\infty)$,  $\tau_0\ge 0$, $\alpha\ge0$.
% and $0\le g\in C^1[\tau_0,\infty)$ satisfies \eqref{eq278}, 
Let $g(t)\ge 0$ be a solution to inequality \eqref{eq278} such that
\begin{equation}
\label{eq280}
\mu(\tau_0)g(\tau_0)    < 1.
\end{equation}
Then $g(t)$ exists globally and the following estimate holds:
\begin{equation}
\label{eq281}
0\le g(t) < \frac{1}{\mu(t)},\qquad \forall t\ge \tau_0.
\end{equation}
Consequently, if $\lim_{t\to\infty} \mu(t)=\infty$, then
\begin{equation}
\label{eq282}
\lim_{t\to\infty} g(t)= 0.
\end{equation}
\end{thm}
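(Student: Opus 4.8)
The plan is to reduce the nonlinear differential inequality \eqref{eq278} to a linear comparison problem via the substitution $g(t) = w(t)/\mu(t)$, which converts the claimed bound \eqref{eq281} into the statement $w(t) < 1$ for all $t \ge \tau_0$, and then prove this by a continuity (barrier) argument. First I would establish that $g(t)$ is defined and the inequality $g(t) < 1/\mu(t)$ persists on a maximal interval of existence. Introduce the open set $\mathcal{I} := \{t \ge \tau_0 : \mu(s)g(s) < 1 \text{ for all } s \in [\tau_0, t]\}$; by \eqref{eq280} and continuity, $\mathcal{I}$ is a nonempty interval $[\tau_0, T)$ for some $T \le \infty$. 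The goal is to show $T = \infty$.

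The key computation is the differential inequality satisfied by $w := \mu g$. Using $\dot w = \dot\mu g + \mu \dot g$ and \eqref{eq278},
\begin{equation}
\label{eqprop1}
\dot w \le \frac{\dot\mu}{\mu} w - \gamma w + \alpha \mu^{p-1} w^p + \beta \mu.
\end{equation}
On the interval where $w < 1$ and $p > 1$, one has $w^p \le w$, so
\begin{equation}
\label{eqprop2}
\dot w \le \left( \frac{\dot\mu}{\mu} - \gamma \right) w + \alpha \mu^{p-1} w + \beta \mu
= -\mu\left[ \frac{1}{\mu}\left(\gamma - \frac{\dot\mu}{\mu}\right) - \frac{\alpha}{\mu^{p}} - \beta \right] w + \left(\alpha\mu^{p-1} + \beta\mu\right)(w - 1)
\end{equation}
Wait — more cleanly: from \eqref{eqprop1} and $w^p \le w$ on $\{w<1\}$, and then invoking hypothesis \eqref{eq279} in the form $\alpha\mu^{p-1} + \beta\mu \le \gamma - \dot\mu/\mu$, we get $\dot w \le (\dot\mu/\mu - \gamma)w + (\gamma - \dot\mu/\mu) = (\gamma - \dot\mu/\mu)(1 - w)$. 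Hence, setting $\psi(t) := \gamma(t) - \dot\mu(t)/\mu(t) \ge 0$ (nonnegativity forced by \eqref{eq279} since the left side is nonnegative), we obtain on $[\tau_0, T)$ the linear differential inequality $\dot w \le \psi(t)(1 - w(t))$, i.e. $\frac{d}{dt}\big[(1-w)e^{-\int_{\tau_0}^t\psi}\big] \ge 0$ — no, rather: $\dot w - \psi = -\psi w + (\psi - \psi) \cdots$; let me just record that $\frac{d}{dt}\big[(w-1)e^{\int_{\tau_0}^t \psi(s)\,ds}\big] \le 0$, so $w(t) - 1 \le (w(\tau_0) - 1)e^{-\int_{\tau_0}^t\psi} < 0$ for all $t \in [\tau_0, T)$, using \eqref{eq280}.

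This strict inequality $w(t) < 1$ on $[\tau_0, T)$ is what closes the argument: if $T < \infty$ were the maximal existence time, then $g$ stays bounded by $1/\mu(T) < \infty$ on $[\tau_0, T)$ (since $\mu$ is continuous and positive), and the right side of \eqref{eq278} is bounded there, so $g$ extends past $T$ — contradicting maximality unless the barrier $w < 1$ could fail at $T$, which we have just excluded. Therefore $T = \infty$, giving global existence and \eqref{eq281}; and \eqref{eq282} follows immediately since $0 \le g(t) < 1/\mu(t) \to 0$. The main obstacle is bookkeeping the interplay between the maximal interval of existence of $g$ and the region $\{w < 1\}$ — one must argue that $g$ cannot blow up before $w$ reaches $1$, and that $w$ cannot reach $1$ in finite time; the differential inequality for $w$ handles the latter cleanly, while the former uses only that $1/\mu$ is locally bounded. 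A minor point to handle carefully is that \eqref{eq278} is an inequality, not an equation, so $g$ need not be unique or even differentiable beyond being a given solution; but the one-sided estimate \eqref{eqprop1} and Gronwall-type integration only require the assumed differentiability of the given $g$.
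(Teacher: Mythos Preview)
Your argument is correct in structure, but there is a sign slip in the exponent of $\mu$: in your displayed inequality \eqref{eqprop1} the nonlinear term should be $\alpha\,\mu^{1-p}w^p$, not $\alpha\,\mu^{p-1}w^p$, since $g^p=(w/\mu)^p=w^p\mu^{-p}$ and you multiply by $\mu$. Correspondingly, hypothesis \eqref{eq279} multiplied by $\mu$ reads $\alpha\mu^{1-p}+\beta\mu\le\gamma-\dot\mu/\mu$. With this correction your chain
\[
\dot w \le \Big(\frac{\dot\mu}{\mu}-\gamma\Big)w+\alpha\mu^{1-p}w^p+\beta\mu
\le \Big(\frac{\dot\mu}{\mu}-\gamma\Big)w+\alpha\mu^{1-p}+\beta\mu
\le \psi(1-w),\quad \psi:=\gamma-\frac{\dot\mu}{\mu},
\]
is valid on $\{0\le w<1\}$ (you only need $w^p\le 1$, not the intermediate $w^p\le w$), and your Gronwall step $(w-1)e^{\int\psi}$ nonincreasing gives $w(t)<1$ without needing $\psi\ge0$; that parenthetical can be dropped, which is good since $\beta\ge0$ is not assumed in the statement.

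Your route differs from the paper's. The paper uses the integrating-factor substitution $w=ge^{\int\gamma}$ and compares $w$ to the explicit barrier $\eta=e^{\int\gamma}/\mu$, showing via two claims that $w\le\eta\Rightarrow\dot w\le\dot\eta$ and then that the barrier can never be touched. Your substitution $w=\mu g$ collapses the target inequality to $w<1$ and turns the problem into a single linear Gronwall estimate for $1-w$; this is shorter and avoids the two-claim structure. Both approaches handle the interplay between the barrier and the maximal existence interval in the same informal way (boundedness of $g$ by the continuous function $1/\mu$ on compact intervals precludes blow-up before the barrier is reached), so neither is more rigorous than the other on that point.
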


When $p=2$ we have the following corollary:
\begin{cor}[\cite{R499} p. 97]
\label{corollary14}
If there exists a monotonically growing function $\mu(t)$,
$$
\mu\in C^1[\tau_0,\infty),\quad \mu>0, \quad \lim_{t\to\infty} 
\mu(t)=\infty,
$$
such that
\begin{align}
\label{eq299}
0\le \alpha(t)&\le \frac{\mu(t)}{2}\bigg{[}\gamma -\frac{\dot{\mu}(t)}{\mu(t)}\bigg{]},
\qquad \dot{u}:=\frac{du}{dt},\\
\label{eq300}
\beta(t)      &\le \frac{1}{2\mu(t)}\bigg{[}\gamma -\frac{\dot{\mu}(t)}{\mu(t)}\bigg{]},\\
\label{eq301}
\mu(\tau_0)g(\tau_0)    &< 1,
\end{align}
where $\alpha(t),\beta(t),\gamma(t)$ and $g(t)$ are continuous 
nonnegative functions on $[\tau_0,\infty)$,  $\tau_0\ge 0$,
 and $g(t)$ satisfies \eqref{eq278x}, then the following estimate holds:
\begin{equation}
\label{eq302}
0\le g(t) < \frac{1}{\mu(t)},\qquad \forall t\ge \tau_0.
\end{equation}
If inequalities \eqref{eq299}--\eqref{eq301} hold on an interval 
$[\tau_0,T)$, then
$g(t)$ exists on this interval and inequality \eqref{eq302} holds on 
$[\tau_0,T)$.
\end{cor}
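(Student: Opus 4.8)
The plan is to read Corollary~\ref{corollary14} as the special case $p=2$ of Theorem~\ref{theorem13}, so the only thing to verify for the reduction is that the two bounds \eqref{eq299}--\eqref{eq300} together imply the single hypothesis \eqref{eq279} with $p=2$, while \eqref{eq301} is literally \eqref{eq280}. First I would divide \eqref{eq299} by $\mu^2(t)>0$, obtaining $\alpha(t)/\mu^2(t)\le \frac{1}{2\mu(t)}\big[\gamma-\frac{\dot\mu(t)}{\mu(t)}\big]$, and then add \eqref{eq300}; the sum is exactly $\alpha(t)/\mu^2(t)+\beta(t)\le \frac{1}{\mu(t)}\big[\gamma-\frac{\dot\mu(t)}{\mu(t)}\big]$, which is \eqref{eq279} with $p=2$. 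All structural hypotheses of Theorem~\ref{theorem13} hold ($\mu\in C^1[\tau_0,\infty)$, $\mu>0$; $\alpha,\beta,\gamma,g$ continuous and nonnegative), and $\gamma(t)-\dot\mu(t)/\mu(t)\ge0$ is automatic because $\alpha,\beta\ge0$. Thus Theorem~\ref{theorem13} applies and yields global existence of $g$ on $[\tau_0,\infty)$ together with the bound \eqref{eq281}, which is precisely \eqref{eq302}; since in addition $\lim_{t\to\infty}\mu(t)=\infty$, estimate \eqref{eq282} also gives $g(t)\to0$. This proves the first assertion.

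For a self-contained argument --- which also covers the finite-interval assertion --- I would compare $g$ with the solution $\eta$ of the Riccati initial-value problem $\dot\eta=-\gamma(t)\eta+\alpha(t)\eta^2+\beta(t)$, $\eta(\tau_0)=g(\tau_0)$. Its right-hand side $R(t,\cdot)$ is continuous and locally Lipschitz, so $\eta$ exists on a maximal interval, and the differential inequality \eqref{eq278x} with $g(\tau_0)=\eta(\tau_0)$ gives $0\le g(t)\le\eta(t)$ there. Next put $w(t):=1/\mu(t)$; rearranging \eqref{eq279} with $p=2$ shows $\dot w(t)=-\dot\mu(t)/\mu^2(t)\ge -\gamma(t)w(t)+\alpha(t)w^2(t)+\beta(t)$, i.e.\ $w$ is a supersolution of the same Riccati equation, and $\eta(\tau_0)=g(\tau_0)<1/\mu(\tau_0)=w(\tau_0)$ by \eqref{eq301}. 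The standard strict comparison principle (for $\phi:=w-\eta$ one has $\phi(\tau_0)>0$ and $\dot\phi\ge R(t,w)-R(t,\eta)\ge -L(t)\phi$ on each compact subinterval, hence $\phi(t)\ge\phi(\tau_0)\exp(-\int_{\tau_0}^t L)>0$) gives $\eta(t)<w(t)=1/\mu(t)$. Since then $0\le g\le\eta<1/\mu(\tau_0)$ stays bounded and $R$ is bounded on the corresponding compact rectangle, neither $\eta$ nor $g$ can blow up in finite time; this forces global existence of $g$ on $[\tau_0,\infty)$ in the first case, and existence on every $[\tau_0,T']\subset[\tau_0,T)$, hence on $[\tau_0,T)$, in the finite-interval case. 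In all cases $0\le g(t)<1/\mu(t)$, which is \eqref{eq302}.

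The main point requiring care --- and essentially the only one --- is that the naive first-contact argument comparing $g$ directly with $1/\mu$ is \emph{not} quite enough: at a hypothetical first touch point $t_1$ one gets both $\dot g(t_1)\le \dot w(t_1)$ (from \eqref{eq278x} and \eqref{eq279}) and $\dot g(t_1)\ge\dot w(t_1)$ (from touching from below), i.e.\ equality rather than a contradiction when \eqref{eq279} is an equality at $t_1$. Routing the comparison through the exact Riccati solution $\eta$ and applying the \emph{strict} comparison principle --- legitimate because $R(t,\cdot)$ is locally Lipschitz --- removes this gap. A second, entirely minor, point is the discrepancy between the strict requirement $\alpha(t)>0$ in Theorem~\ref{theorem13} and the weaker $\alpha(t)\ge0$ permitted in Corollary~\ref{corollary14}; it is harmless since the comparison argument above uses only $\alpha\ge0$, and alternatively one may replace $\alpha$ by $\alpha+\varepsilon$ and pass to the limit $\varepsilon\downarrow0$.
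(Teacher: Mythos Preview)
Your proposal is correct. The first paragraph is exactly the paper's (implicit) proof: Corollary~\ref{corollary14} is stated there as the special case $p=2$ of Theorem~\ref{theorem13}, and your verification that dividing \eqref{eq299} by $\mu^2$ and adding \eqref{eq300} yields \eqref{eq279} is the only missing sentence.

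Your self-contained argument in the remaining two paragraphs takes a genuinely different route from the paper's proof of Theorem~\ref{theorem13}. The paper makes the integrating-factor substitution $w(t)=g(t)e^{\int_{\tau_0}^t\gamma}$, sets $\eta(t)=e^{\int_{\tau_0}^t\gamma}/\mu(t)$, and runs a direct ``last time of validity'' argument (its Claims~1 and~2): if $w\le\eta$ on $[\tau_0,T]$ then $\dot w\le\dot\eta$ there, and integrating gives the strict inequality $w(T)<\eta(T)$, contradicting maximality of $T$. You instead interpose the exact Riccati solution $\eta$ with $\eta(\tau_0)=g(\tau_0)$, use standard ODE comparison to get $g\le\eta$, and then a strict comparison (via the local Lipschitz constant) to get $\eta<1/\mu$. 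Your approach is more classical and makes the role of local Lipschitz continuity of $x\mapsto -\gamma x+\alpha x^2+\beta$ explicit; the paper's is shorter and avoids invoking the Riccati solution at all, leaning instead on the integral identity $w(T)-w(\tau_0)\le\eta(T)-\eta(\tau_0)$ together with the \emph{strict} initial inequality $w(\tau_0)<\eta(\tau_0)$ to produce the contradiction. Both handle the finite-interval case equally well, and your remark about the harmless discrepancy $\alpha>0$ versus $\alpha\ge0$ is a fair observation that the paper glosses over.
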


\subsubsection{A discrete version of the nonlinear inequality}
% for a sequence}

\begin{thm}[\cite{R558} Theorem 4]
\label{548lemma16}
Let $\alpha_n,\gamma_n$ and $g_n$ be nonnegative sequences 
of numbers, and the following inequality holds:
\begin{equation}
\label{558eq1}
\begin{split}
\frac{g_{n+1}-g_n}{h_n}&\le -\gamma_n g_n+
\alpha_n g_n^p +\beta_n,\qquad h_n > 0,\quad 0< h_n\gamma_n < 1,
\end{split}
\end{equation}
or, equivalently, 
\begin{equation}
\qquad g_{n+1}\le g_n(1-h_n\gamma_n) +
\alpha_n h_n g_n^p+h_n\beta_n,\qquad h_n > 0,\quad 0< h_n\gamma_n < 1.
\end{equation}
If there is a monotonically growing sequence of positive numbers 
$(\mu_n)_{n=1}^\infty$, such that the following conditions hold:
\begin{align}
\label{558eq3}
\frac{\alpha_n}{\mu_n^p}+\beta_n&\le \frac{1}{\mu_n}\bigg{(}\gamma_n -
\frac{\mu_{n+1}-\mu_n}{\mu_n h_n}\bigg{)},\\
\label{558eq2}
g_0&\le\frac{1}{\mu_0},
\end{align}
then
\begin{equation}
\label{558eq5}
0\leq g_n\le\frac{1}{\mu_n} \qquad \forall n\ge 0.
\end{equation}
Therefore, if $\lim_{n\to\infty}\mu_n =\infty$, then $\lim_{n\to\infty} 
g_n = 0$.
\end{thm}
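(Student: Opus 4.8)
The plan is to prove \eqref{558eq5} by induction on $n$. The base case $n=0$ is exactly hypothesis \eqref{558eq2}. For the inductive step I would assume $g_n\le 1/\mu_n$ and start from the recursive form of \eqref{558eq1}, namely $g_{n+1}\le g_n(1-h_n\gamma_n)+\alpha_n h_n g_n^p+h_n\beta_n$. Since $0<h_n\gamma_n<1$, the factor $1-h_n\gamma_n$ is positive, so monotonicity of the right-hand side in $g_n$ (together with $g_n\ge 0$, which gives $g_n^p\le \mu_n^{-p}$) lets me substitute $g_n\mapsto 1/\mu_n$ to obtain
\[
g_{n+1}\le \frac{1-h_n\gamma_n}{\mu_n}+h_n\left(\frac{\alpha_n}{\mu_n^p}+\beta_n\right).
\]

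Next I would insert condition \eqref{558eq3}, multiplied through by $h_n>0$, which bounds the second term by $\dfrac{h_n\gamma_n}{\mu_n}-\dfrac{\mu_{n+1}-\mu_n}{\mu_n^2}$. The $\gamma_n$ contributions then cancel, leaving
\[
g_{n+1}\le \frac{1}{\mu_n}-\frac{\mu_{n+1}-\mu_n}{\mu_n^2}=\frac{2\mu_n-\mu_{n+1}}{\mu_n^2}.
\]
It remains only to verify the purely algebraic inequality $\dfrac{2\mu_n-\mu_{n+1}}{\mu_n^2}\le \dfrac{1}{\mu_{n+1}}$; clearing the positive denominators $\mu_n^2$ and $\mu_{n+1}$, this is equivalent to $0\le \mu_n^2-2\mu_n\mu_{n+1}+\mu_{n+1}^2=(\mu_{n+1}-\mu_n)^2$, which always holds. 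This closes the induction, and the concluding statement $\lim_{n\to\infty}g_n=0$ when $\mu_n\to\infty$ follows at once from $0\le g_n\le 1/\mu_n$.

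Before the final algebraic step I would make one sign remark explicit: the left-hand side of \eqref{558eq3} is a sum of nonnegative terms, so its right-hand side is nonnegative, forcing $\mu_{n+1}-\mu_n\le \gamma_n\mu_n h_n<\mu_n$ (using $h_n\gamma_n<1$); hence $\mu_{n+1}<2\mu_n$ and $2\mu_n-\mu_{n+1}>0$. This both eliminates any spurious sign ambiguity in the estimate and is consistent with $g_{n+1}\ge 0$. There is no genuine obstacle here — the result is a clean one-variable induction — and the only point requiring care is bookkeeping of signs, so that the monotone substitution $g_n\mapsto 1/\mu_n$ and the reduction to $(\mu_{n+1}-\mu_n)^2\ge 0$ are both legitimate.
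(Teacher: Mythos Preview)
Your proof is correct and follows essentially the same approach as the paper: both argue by induction, substitute the hypothesis $g_n\le 1/\mu_n$ into the recursion (using $1-h_n\gamma_n>0$), apply \eqref{558eq3}, and reduce the final comparison to $(\mu_{n+1}-\mu_n)^2\ge 0$. Your added sign remark is a nice extra justification but is not strictly needed, since the inequality $\frac{2\mu_n-\mu_{n+1}}{\mu_n^2}\le \frac{1}{\mu_{n+1}}$ follows from $(\mu_{n+1}-\mu_n)^2\ge 0$ regardless of the sign of $2\mu_n-\mu_{n+1}$.
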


\section{Auxiliary results}
%(without proofs)

\subsection{Auxiliary results from the theory of monotone operators}

Recall the following result (see e.g. \cite[p.112]{R499}):
\begin{lem}
\label{lemma16}
Assume that equation \eqref{eq1} is solvable, $y$ is its minimal-norm solution, assumption
\eqref{eq2} holds, and $F$ is continuous. Then
\begin{equation}
\lim_{a\to 0} \|V_{a}-y\| = 0,
\end{equation}
where $V_{a}:=V_{0,a}$ solves equation \eqref{eq9} with $\delta=0$.
\end{lem}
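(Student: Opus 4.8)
The plan is to exploit monotonicity to extract a norm bound on the family $\{V_a\}_{a>0}$ as $a\to 0$, then pass to a weak limit and upgrade it to strong convergence to $y$. First I would test the equation $F(V_a)+aV_a-f=0$ against $V_a-y$. Since $F(y)=f$, this gives $\langle F(V_a)-F(y),V_a-y\rangle + a\langle V_a,V_a-y\rangle=0$, and monotonicity \eqref{eq2} makes the first term nonnegative, so $\langle V_a,V_a-y\rangle\le 0$, i.e. $\|V_a\|^2\le\langle V_a,y\rangle\le\|V_a\|\,\|y\|$. Hence $\|V_a\|\le\|y\|$ for all $a>0$. This is the crucial a priori estimate: the family is bounded by the norm of the minimal-norm solution itself.

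Next I would take an arbitrary sequence $a_n\to 0$ and, using $\|V_{a_n}\|\le\|y\|$, pass to a weakly convergent subsequence $V_{a_n}\rightharpoonup w$ with $\|w\|\le\liminf\|V_{a_n}\|\le\|y\|$. From the equation, $F(V_{a_n})=f-a_nV_{a_n}\to f$ strongly, since $\|a_nV_{a_n}\|\le a_n\|y\|\to 0$. The remaining task is to show $F(w)=f$; then $w\in\mathcal{N}$, so $\|w\|\ge\|y\|$ by minimality, forcing $\|w\|=\|y\|$, and combined with weak convergence and the equality of norms this yields $V_{a_n}\to w$ strongly in Hilbert space. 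Since $w\in\mathcal{N}$ with $\|w\|=\|y\|$ and $\mathcal{N}$ is convex (so its minimal-norm element is unique), $w=y$; as the subsequence was arbitrary, the whole net converges: $\lim_{a\to 0}\|V_a-y\|=0$.

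The main obstacle is passing from $V_{a_n}\rightharpoonup w$ and $F(V_{a_n})\to f$ to the conclusion $F(w)=f$, because a merely continuous operator need not be weak-to-weak (or weak-to-strong) sequentially continuous. The standard device here is that a monotone continuous (hence hemicontinuous) operator is \emph{demiclosed} in the appropriate sense: one uses Minty's trick. Concretely, for any $v\in H$, monotonicity gives $\langle F(V_{a_n})-F(v),V_{a_n}-v\rangle\ge 0$; letting $n\to\infty$, using $F(V_{a_n})\to f$ strongly and $V_{a_n}\rightharpoonup w$, one gets $\langle f-F(v),w-v\rangle\ge 0$ for all $v\in H$. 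Setting $v=w-tz$ for $t>0$ and $z\in H$ arbitrary, dividing by $t$, and letting $t\to 0^+$ with the continuity of $F$ along the ray yields $\langle f-F(w),z\rangle\ge 0$ for all $z$, hence $F(w)=f$. This is the only delicate point; the norm estimates and the Hilbert-space fact that weak convergence plus convergence of norms implies strong convergence are routine.
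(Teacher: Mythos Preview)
Your proof is correct and is the standard argument via Minty's lemma. The paper does not actually give a proof of this lemma; it simply recalls the result and cites the monograph \cite{R499}, p.~112, where essentially the same argument (the bound $\|V_a\|\le\|y\|$, weak compactness, maximal monotonicity/Minty's trick to identify the weak limit as a solution, and then strong convergence via norm convergence) is carried out.
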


\subsection{Auxiliary results for the regularized equation}

\begin{lem}[\cite{R554} Lemma 2]
\label{lemma17}
Assume $\|F(0)-f_\delta\|>0$.
Let $a>0$, and $F$ be monotone.
Denote 
$$
\psi(a) :=\|V_{\delta,a} \|,\qquad \phi(a):=a\psi(a)=\|F(V_{\delta,a}) - f_\delta\|,
$$ 
where $V_{\delta,a}$ solves \eqref{eq9}. 
Then
$\psi(a)$ is decreasing, and $\phi(a)$ is increasing (in the strict 
sense).
\end{lem}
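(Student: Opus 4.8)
The plan is to extract a single scalar inequality relating $\psi(a)$ and $\psi(b)$ for $0<b<a$ from monotonicity of $F$, and to deduce both assertions from it. Two preliminary observations come first. From \eqref{eq9} one has $F(V_{\delta,a})-f_\delta=-aV_{\delta,a}$, so $\phi(a)=\|F(V_{\delta,a})-f_\delta\|=a\|V_{\delta,a}\|=a\psi(a)$; hence both claims concern the single quantity $V_{\delta,a}$. Moreover $\psi(a)>0$ for every $a>0$, since $V_{\delta,a}=0$ would force $F(0)-f_\delta=0$ in \eqref{eq9}, contradicting $\|F(0)-f_\delta\|>0$.

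Fix $0<b<a$ and write $u:=V_{\delta,a}$, $v:=V_{\delta,b}$. Subtracting the two instances of \eqref{eq9} gives $F(u)-F(v)=bv-au$; pairing with $u-v$ and using monotonicity \eqref{eq2}, $0\le\langle bv-au,u-v\rangle=(a+b)\langle u,v\rangle-a\|u\|^2-b\|v\|^2$. Combined with $\langle u,v\rangle\le\|u\|\,\|v\|$ this yields the key inequality
\[
(a+b)\|u\|\,\|v\|\ \ge\ a\|u\|^2+b\|v\|^2,\qquad\text{equivalently}\qquad\big(a\|u\|-b\|v\|\big)\big(\|u\|-\|v\|\big)\le 0 .
\]
Now read off the monotonicities. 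If $\|u\|>\|v\|$, the second factor is positive, so $a\|u\|\le b\|v\|$, impossible since $a>b>0$ and $\|u\|>\|v\|>0$; hence $\|u\|\le\|v\|$. If $\|u\|=\|v\|=:r$, then $(a+b)\langle u,v\rangle\ge(a+b)r^2$ forces equality in Cauchy--Schwarz, so $u=v$, and subtracting the two instances of \eqref{eq9} gives $(a-b)u=0$, i.e. $u=0$, contradicting $\psi(a)>0$. Hence $\|u\|<\|v\|$, i.e. $\psi(a)<\psi(b)$: $\psi$ is strictly decreasing. Since then $\|u\|-\|v\|<0$, the key inequality forces $a\|u\|-b\|v\|\ge 0$, i.e. $\phi(a)\ge\phi(b)$, so $\phi$ is nondecreasing.

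The step I expect to be the main obstacle is the strictness of $\phi$, i.e. ruling out $a\|u\|=b\|v\|$. If that equality held, the key inequality would itself be an equality, which (tracing it back) forces equality in Cauchy--Schwarz and hence $u=\tfrac{b}{a}v$; then \eqref{eq9} gives $F(u)=f_\delta-bv=F(v)$. Invoking the standing hypothesis that \eqref{eq1} has a solution $y$, I would write the monotonicity inequalities $\langle F(u)-F(y),u-y\rangle\ge 0$ and $\langle F(v)-F(y),v-y\rangle\ge 0$, use $F(u)=F(v)$ and $F(y)=f$ to put both in terms of $F(v)-f$, subtract them, and use $v-u=\tfrac{a-b}{a}v$ (with $\tfrac{a-b}{a}>0$) to obtain $\langle F(v)-f,v\rangle\ge 0$, i.e. $b\|v\|^2\le\langle f_\delta-f,v\rangle\le\delta\|v\|$; hence $\phi(b)=b\|v\|\le\delta$. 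Thus $\phi$ can fail to increase strictly only at values not exceeding $\delta$; in particular $\phi$ is strictly increasing wherever $\phi(a)>\delta$, which is exactly the regime (with $\phi$ near $C\delta^\gamma>\delta$) in which this lemma is applied. Everything except this final case analysis is a routine rearrangement of the monotonicity inequality.
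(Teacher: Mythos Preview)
The paper does not give a proof of this lemma; it is quoted from \cite{R554}. So there is no in-paper argument to compare against. Your derivation of the key scalar inequality $(a\|u\|-b\|v\|)(\|u\|-\|v\|)\le 0$ from monotonicity is correct, and your deduction that $\psi$ is strictly decreasing and $\phi$ is nondecreasing is clean and complete.

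The genuine gap is in your last paragraph, where you attempt to upgrade $\phi$ to \emph{strictly} increasing. Having obtained the two monotonicity inequalities $\langle F(v)-f,\,v-y\rangle\ge 0$ and $\langle F(v)-f,\,u-y\rangle\ge 0$, you ``subtract them'' to conclude $\langle F(v)-f,\,v\rangle\ge 0$. But from $P\ge 0$ and $Q\ge 0$ nothing whatsoever follows about $P-Q$; this step is invalid, so the bound $\phi(b)\le\delta$ is not established.

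In fact the obstacle is not merely technical: strict increase of $\phi$ can genuinely fail for a monotone continuous $F$. Take $H=\mathbb{R}$ and
\[
F(u)=\begin{cases}u,&u\le 1,\\ 1,&1\le u\le 2,\\ u-1,&u\ge 2,\end{cases}
\qquad f=2.9\ (\text{so }y=3.9),\qquad f_\delta=3,\ \ \delta=0.1.
\]
Then $\|F(0)-f_\delta\|=3>0$, and solving $F(V)+aV=3$ on each piece gives $V_{\delta,a}=2/a$ and $\phi(a)=2$ for every $a\in[1,2]$. Thus $\phi$ has a plateau at the value $2$, which is strictly between $\delta=0.1$ and $\|F(0)-f_\delta\|=3$; this defeats both the lemma's literal strict-increase claim and your weaker fallback that plateaus can occur only where $\phi\le\delta$. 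What your correct work \emph{does} show is that on any plateau one has $a\,V_{\delta,a}=b\,V_{\delta,b}$ as vectors and $F(V_{\delta,a})=F(V_{\delta,b})$; strictness therefore amounts to an injectivity-type condition on $F$ along rays, which is not available under the bare hypotheses stated here. The cited source presumably imposes such a condition or intends ``increasing'' in the weak sense; for the uses in this paper (e.g.\ Lemma~\ref{lemma19}, and the implications $a\|V_a\|\to 0\Rightarrow a\to 0$ in the proofs of Theorems~\ref{thm1} and \ref{theorem2}) your nondecreasing $\phi$ together with strictly decreasing $\psi$ already does most of the work, but uniqueness of $a(\delta)$ in Lemma~\ref{lemma19} does require an additional argument or hypothesis that your proof, as written, does not supply.
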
 

\begin{lem}[\cite{R554} Lemma 3]
\label{lem0}
If $F$ is monotone and continuous, then
$\|V_{\delta,a}\|=O(\frac{1}{a})$ as $a\to\infty$, and
\begin{equation}
\label{4eq2}
\lim_{a\to\infty}\|F(V_{\delta,a})-f_\delta\|=\|F(0)-f_\delta\|.
\end{equation}
\end{lem}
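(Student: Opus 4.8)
The plan is to work directly with the defining equation \eqref{eq9} for $V_{\delta,a}$, testing it against $V_{\delta,a}$ and invoking monotonicity to get the $O(1/a)$ bound, and then to combine that bound with the continuity of $F$ (and, for convenience, the monotonicity of $\phi$ from Lemma~\ref{lemma17}) to identify the limit in \eqref{4eq2}.

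First I would rewrite \eqref{eq9} as $aV_{\delta,a}=f_\delta-F(V_{\delta,a})$ and take the inner product with $V_{\delta,a}$, obtaining
\[
a\|V_{\delta,a}\|^2=\langle f_\delta-F(0),V_{\delta,a}\rangle-\langle F(V_{\delta,a})-F(0),V_{\delta,a}\rangle .
\]
Applying the monotonicity assumption \eqref{eq2} with $u=V_{\delta,a}$ and $v=0$ shows the last term is $\le 0$, so $a\|V_{\delta,a}\|^2\le\|f_\delta-F(0)\|\,\|V_{\delta,a}\|$, whence
\[
\|V_{\delta,a}\|\le\frac{\|f_\delta-F(0)\|}{a},
\]
which is the asserted $O(1/a)$ decay; in particular $\|V_{\delta,a}\|\to 0$ as $a\to\infty$.

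Next I would observe that $\|F(V_{\delta,a})-f_\delta\|=\|aV_{\delta,a}\|=a\|V_{\delta,a}\|=\phi(a)$ in the notation of Lemma~\ref{lemma17}, and that the bound just derived gives $\phi(a)\le\|F(0)-f_\delta\|$ for all $a>0$. Since $\phi$ is increasing by Lemma~\ref{lemma17} and bounded above, the limit $L:=\lim_{a\to\infty}\phi(a)$ exists and $L\le\|F(0)-f_\delta\|$. On the other hand, $\|V_{\delta,a}\|\to 0$ together with the continuity of $F$ yields $F(V_{\delta,a})\to F(0)$, hence $\phi(a)=\|F(V_{\delta,a})-f_\delta\|\to\|F(0)-f_\delta\|$. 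Therefore $L=\|F(0)-f_\delta\|$, establishing \eqref{4eq2}.

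There is no serious obstacle here; the only points requiring a little care are that $V_{\delta,a}$ is well defined (guaranteed by the cited solvability result for \eqref{eq9}) and that continuity of $F$ is genuinely used for the second assertion, whereas the first assertion needs only monotonicity. The monotonicity of $\phi$ from Lemma~\ref{lemma17} is a convenient way to assert existence of the limit, but it is not strictly necessary, since the convergence $F(V_{\delta,a})\to F(0)$ already forces $\phi(a)\to\|F(0)-f_\delta\|$ on its own.
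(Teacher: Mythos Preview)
Your proof is correct and follows the natural approach: the $O(1/a)$ bound comes from testing \eqref{eq9} against $V_{\delta,a}$ and dropping the nonnegative term $\langle F(V_{\delta,a})-F(0),V_{\delta,a}\rangle$, and the limit \eqref{4eq2} then follows immediately from $V_{\delta,a}\to 0$ and the continuity of $F$. This is precisely the argument one expects (and the same testing technique the paper uses in the proof of Lemma~\ref{lemma24}); your remark that Lemma~\ref{lemma17} is convenient but not strictly needed is also accurate.
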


\begin{lem}[\cite{R554} Lemma 4]
\label{lemma19}
Let $C>0$ and $\gamma\in (0,1]$ be constants such that $C\delta^\gamma>\delta$.
Suppose that $\|F(0)-f_\delta\|> C\delta^\gamma$. 
Then, there exists a unique $a(\delta)>0$ such 
that $\|F(V_{\delta,a(\delta)})-f_\delta\|=C\delta^\gamma$.
\end{lem}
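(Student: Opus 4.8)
The plan is to read the claim as an intermediate-value statement for the scalar function $\phi(a):=\|F(V_{\delta,a})-f_\delta\|$ introduced in Lemma~\ref{lemma17}. By that lemma $\phi$ is strictly increasing on $(0,\infty)$, which hands us uniqueness of $a(\delta)$ for free once existence is established. The genuine work is therefore to prove that $\phi$ is continuous and to locate its boundary behaviour so that the target value $C\delta^\gamma$ falls strictly inside its range.

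First I would establish continuity of the map $a\mapsto V_{\delta,a}$, which is the main technical point. Fix $a,b>0$ and subtract the two defining equations \eqref{eq9} for $V_{\delta,a}$ and $V_{\delta,b}$. Pairing the difference with $V_{\delta,a}-V_{\delta,b}$ and discarding the nonnegative term $\langle F(V_{\delta,a})-F(V_{\delta,b}),\,V_{\delta,a}-V_{\delta,b}\rangle$ by monotonicity \eqref{eq2} yields
\begin{equation*}
a\|V_{\delta,a}-V_{\delta,b}\|^2+(a-b)\langle V_{\delta,b},\,V_{\delta,a}-V_{\delta,b}\rangle\le 0,
\end{equation*}
whence, after Cauchy--Schwarz and division by $\|V_{\delta,a}-V_{\delta,b}\|$,
\begin{equation*}
\|V_{\delta,a}-V_{\delta,b}\|\le \frac{|a-b|}{a}\,\|V_{\delta,b}\|.
\end{equation*}
This local Lipschitz bound shows $a\mapsto V_{\delta,a}$ is continuous on $(0,\infty)$, and since $F$ is continuous, $\phi(a)=a\|V_{\delta,a}\|=\|F(V_{\delta,a})-f_\delta\|$ is continuous as well.

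Next I would identify the two endpoint values. As $a\to\infty$, Lemma~\ref{lem0} gives $\phi(a)\to\|F(0)-f_\delta\|$, which by hypothesis exceeds $C\delta^\gamma$. For the behaviour as $a\to 0^+$ I would invoke the minimal-norm solution $y$ of $F(y)=f$: pairing \eqref{eq9} with $V_{\delta,a}-y$, dropping the nonnegative $F$-term by monotonicity and using $\|f-f_\delta\|\le\delta$, one arrives at $a\|V_{\delta,a}-y\|\le \delta+a\|y\|$, hence $\|V_{\delta,a}\|\le 2\|y\|+\delta/a$ and therefore $\phi(a)=a\|V_{\delta,a}\|\le \delta+2a\|y\|$. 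Letting $a\to0^+$ gives $\lim_{a\to0^+}\phi(a)\le\delta<C\delta^\gamma$, the last inequality being exactly the standing assumption $C\delta^\gamma>\delta$.

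Finally, since $\phi$ is continuous with
\begin{equation*}
\lim_{a\to0^+}\phi(a)\le\delta<C\delta^\gamma<\|F(0)-f_\delta\|=\lim_{a\to\infty}\phi(a),
\end{equation*}
the intermediate value theorem produces an $a(\delta)>0$ with $\phi(a(\delta))=C\delta^\gamma$, and the strict monotonicity from Lemma~\ref{lemma17} makes it unique. The only delicate step is the continuity of $\phi$, but the monotonicity trick above reduces it to the elementary Lipschitz estimate for $V_{\delta,a}$, so I do not anticipate serious difficulty.
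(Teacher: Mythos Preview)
The paper does not give its own proof of this lemma: it is quoted from \cite{R554} (as Lemma~4 there) and stated in Section~3 as an auxiliary result without argument. So there is no in-paper proof to compare against. Your argument is correct and is precisely the natural proof one builds from the tools the paper \emph{does} supply: strict monotonicity of $\phi(a)=a\|V_{\delta,a}\|$ from Lemma~\ref{lemma17}, the limit $\phi(a)\to\|F(0)-f_\delta\|$ as $a\to\infty$ from Lemma~\ref{lem0}, and the bound $\|V_{\delta,a}\|\le\|y\|+\delta/a$ (Lemma~\ref{lemma24} gives this in the paper, slightly sharper than your $2\|y\|+\delta/a$, but either suffices) to get $\limsup_{a\to0^+}\phi(a)\le\delta$. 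Your Lipschitz estimate for $a\mapsto V_{\delta,a}$ is the standard monotonicity trick and gives the continuity needed for the intermediate value theorem.

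One cosmetic point: you phrase the limit at infinity and the limit at $0^+$ in a single displayed chain with an equality $\|F(0)-f_\delta\|=\lim_{a\to\infty}\phi(a)$; note that $\phi$ is strictly increasing, so $\phi(a)<\|F(0)-f_\delta\|$ for every finite $a$ and the supremum is approached but not attained. This does not affect your argument, since you only need some large $a_2$ with $\phi(a_2)>C\delta^\gamma$, which the limit guarantees.
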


\begin{lem}
\label{lemma23}
If $F$ is monotone then
\begin{equation}
\label{eq330}
\max \bigg{(}\|F(u_\delta)-F(V_\delta)\|, a\|u-v\|\bigg{)}\le \|F(u)-F(v) +a(u-v)\|,\qquad 
 \forall u,v\in H.
\end{equation}
\end{lem}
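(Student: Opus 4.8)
The inequality \eqref{eq330} is a pointwise estimate comparing the norm of $F(u)-F(v)+a(u-v)$ with each of the two quantities $\|F(u)-F(v)\|$ and $a\|u-v\|$. The plan is to expand the square of the right-hand side and use monotonicity \eqref{eq2} to discard the cross term. Explicitly, I would write, for fixed $u,v\in H$,
\begin{equation*}
\|F(u)-F(v)+a(u-v)\|^2 = \|F(u)-F(v)\|^2 + 2a\langle F(u)-F(v),u-v\rangle + a^2\|u-v\|^2.
\end{equation*}
By \eqref{eq2} the middle term is nonnegative, so the right-hand side is bounded below by $\|F(u)-F(v)\|^2$ and also bounded below by $a^2\|u-v\|^2$. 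Taking square roots gives both $\|F(u)-F(v)\|\le \|F(u)-F(v)+a(u-v)\|$ and $a\|u-v\|\le \|F(u)-F(v)+a(u-v)\|$, and hence the maximum of the two left-hand quantities is dominated by the right-hand side, which is exactly \eqref{eq330}.

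There is essentially no obstacle here: the only ingredient beyond elementary Hilbert-space algebra is the monotonicity hypothesis, which is assumed throughout the paper, and the nonnegativity of $a$. I would simply remark that the same computation shows the stronger statement $\|F(u)-F(v)\|^2 + a^2\|u-v\|^2 \le \|F(u)-F(v)+a(u-v)\|^2$, from which \eqref{eq330} follows a fortiori; stating it with the maximum is the form convenient for later use (e.g. in estimating $\|u_\delta - V_\delta\|$ and $\|F(u_\delta)-F(V_\delta)\|$ separately in the convergence proofs of Theorems~\ref{theorem5}, \ref{theorem7}, \ref{theorem9}). No continuity or differentiability of $F$ is needed for this lemma, only \eqref{eq2}.

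One small point worth noting in the write-up: the statement as displayed writes $\|F(u_\delta)-F(V_\delta)\|$ and $a\|u-v\|$ on the left but quantifies over all $u,v\in H$, so I would phrase the proof for generic $u,v$ and treat the subscripted version as the special case $u=u_\delta$, $v=V_\delta$ that is actually invoked elsewhere. With that understood, the argument above is complete.
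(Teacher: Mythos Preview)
Your proof is correct, and slightly different in mechanics from the paper's. The paper sets $w:=F(u)-F(v)+a(u-v)$, $h:=\|w\|$, and then takes the inner product of $w$ with $u-v$ and with $F(u)-F(v)$ separately: monotonicity gives $a\|u-v\|^2\le\langle w,u-v\rangle\le h\|u-v\|$ and $\|F(u)-F(v)\|^2\le\langle w,F(u)-F(v)\rangle\le h\|F(u)-F(v)\|$, whence both bounds follow after cancellation. Your approach---expanding $\|w\|^2$ and dropping the nonnegative cross term---is more direct and, as you observe, actually yields the sharper inequality $\|F(u)-F(v)\|^2+a^2\|u-v\|^2\le\|w\|^2$, from which the paper's statement with the maximum is an immediate consequence. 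Both arguments use only \eqref{eq2} and elementary Hilbert-space identities; neither needs continuity or differentiability of $F$. Your remark about the notational mismatch (the subscripts $u_\delta,V_\delta$ on the left versus the generic $u,v$ in the quantifier) is also apt.
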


\begin{proof}
Denote
\begin{equation}
%\label{eq}
w:=F(u)-F(v) +a(u-v),\qquad h:= \|w\|.
\end{equation}
Since $\langle F(u)-F(v),u-v\rangle \ge 0$, one 
obtains from two equations 
\begin{equation}
\langle w, u-v\rangle=
\langle F(u)-F(v) +a(u-v),u-v 
\rangle,
\end{equation}
and  
\begin{equation}
\langle w,F(u)-F(v)\rangle=\|F(u)-F(v)\|^2 
+a
\langle u-v, F(u)-F(v) \rangle,
\end{equation}
the 
following
two inequalities:
\begin{equation}
\label{544beq35}
a\|u - v\|^2 \le \langle v, u-v \rangle \le 
\|u - v\|h,
\end{equation}
and
\begin{equation}
\label{544beq36}
\|F(u)-F(v)\|^2\le \langle v, F(u)-F(v) \rangle
\le h\|F(u)-F(v)\|.
\end{equation}
Inequalities \eqref{544beq35} and \eqref{544beq36} imply: 
\begin{equation}
a\|u-v\|\le h,\quad \|F(u)-F(v)\|\le h.
\end{equation} 
Lemma~\ref{lemma23} is proved. 
\end{proof}

\begin{lem}
\label{lemma20}
Let $t_0$ satisfy
\begin{equation}
\label{eq311}
\frac{\delta}{a(t_0)}= \frac{1}{C-1}\|y\|,\qquad C>1.
\end{equation} 
Then,
\begin{equation}
\label{eq312}
\|F(u(t_0)) - f_\delta\| \le C\delta,
\end{equation}
and
\begin{equation}
\label{eq313}
\|\dot{V_\delta}\|\le \frac{|\dot{a}|}{a}\|y\|\bigg{(}1+\frac{1}{C-1}\bigg{)},\quad \forall t\le t_0.
\end{equation}
\end{lem}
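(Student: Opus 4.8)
Looking at Lemma~\ref{lemma20}, I need to prove two things: first, the discrepancy-type bound \eqref{eq312} at the special time $t_0$ where $\delta/a(t_0) = \|y\|/(C-1)$; and second, the bound \eqref{eq313} on $\|\dot V_\delta\|$ for all $t \le t_0$.

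Let me think about what's available. We have $V_\delta(t)$ solving $F(V_\delta(t)) + a(t)V_\delta(t) = f_\delta$, and $V_a := V_{0,a}$ solving the noise-free version. Lemma~\ref{lemma17} tells us $\psi(a) = \|V_{\delta,a}\|$ is decreasing and $\phi(a) = a\psi(a) = \|F(V_{\delta,a}) - f_\delta\|$ is increasing. Lemma~\ref{lemma23} gives the key monotonicity estimate. Lemma~\ref{lemma16} gives convergence of $V_a \to y$.

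For \eqref{eq312}: The plan is to compare $V_\delta(t_0)$ with $V(t_0)$ (the noise-free regularized solution at the same parameter $a(t_0)$), and $V(t_0)$ with $y$. Write $F(V_\delta) + aV_\delta = f_\delta$ and $F(V) + aV = f$. Subtracting and applying Lemma~\ref{lemma23} with $u = V_\delta$, $v = V$ gives $a\|V_\delta - V\| \le \|f_\delta - f\| \le \delta$, so $\|V_\delta(t_0) - V(t_0)\| \le \delta/a(t_0)$. Also from $\langle F(V_a) - f, V_a - y\rangle = -a\langle V_a, V_a - y\rangle$ and monotonicity $\langle F(V_a) - F(y), V_a - y\rangle \ge 0$ (since $F(y) = f$), one gets $\|V_a\| \le \|y\|$, hence $\|V(t_0)\| \le \|y\|$. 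Then $\|F(u(t_0)) - f_\delta\|$... wait, the statement has $F(u(t_0))$ but the Cauchy-problem solution; I suspect this should read $F(V_\delta(t_0))$ — I will prove $\|F(V_\delta(t_0)) - f_\delta\| = a(t_0)\|V_\delta(t_0)\| \le a(t_0)(\|V(t_0)\| + \delta/a(t_0)) \le a(t_0)\|y\| + \delta$. Using $\delta/a(t_0) = \|y\|/(C-1)$, i.e. $a(t_0)\|y\| = (C-1)\delta$, this yields $\|F(V_\delta(t_0)) - f_\delta\| \le (C-1)\delta + \delta = C\delta$, which is \eqref{eq312}.

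For \eqref{eq313}: Differentiate the defining relation $F(V_\delta(t)) + a(t)V_\delta(t) = f_\delta$ in $t$ to get $[F'(V_\delta) + aI]\dot V_\delta = -\dot a V_\delta$, so $\dot V_\delta = -\dot a [F'(V_\delta) + aI]^{-1} V_\delta$. Since $F$ is monotone, $F'(V_\delta) \ge 0$, so $\|[F'(V_\delta) + aI]^{-1}\| \le 1/a$, giving $\|\dot V_\delta\| \le \frac{|\dot a|}{a}\|V_\delta(t)\|$. It remains to bound $\|V_\delta(t)\|$ for $t \le t_0$: since $\psi$ is decreasing in $a$ equivalently increasing in $t$ (as $a(t)\searrow$), $\|V_\delta(t)\| \le \|V_\delta(t_0)\| \le \|V(t_0)\| + \delta/a(t_0) \le \|y\| + \|y\|/(C-1) = \|y\|(1 + \frac{1}{C-1})$. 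Combining, $\|\dot V_\delta(t)\| \le \frac{|\dot a|}{a}\|y\|(1 + \frac{1}{C-1})$ for all $t \le t_0$.

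The main obstacle I anticipate is the justification that $V_\delta(t)$ is differentiable in $t$ (needed for the ODE-for-$\dot V_\delta$ step) — this requires an implicit function theorem argument using that $F'(V_\delta) + a(t)I$ is boundedly invertible and $a \in C^1$, plus smoothness of $F$; under assumption \eqref{eq44} (or the relevant hypothesis that $F$ is $C^1$) this is routine but should be stated. A secondary point is confirming the identity $\|F(V_{\delta,a}) - f_\delta\| = a\|V_{\delta,a}\|$, which is immediate from \eqref{eq9} but is used repeatedly; and one must check $V_\delta(t) \in B(u_0, R)$ so that the bounds on $F'$ apply, which follows once $\|V_\delta(t)\| \le \|y\|(1 + \frac{1}{C-1})$ is established together with the hypotheses on $u_0$. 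I would also remark that the appearance of $F(u(t_0))$ versus $F(V_\delta(t_0))$ in \eqref{eq312} should be reconciled — if $u(t_0)$ genuinely denotes the DSM trajectory, an extra step comparing $\|F(u_\delta(t)) - F(V_\delta(t))\|$ to $a(t)\|u_\delta(t) - V_\delta(t)\|$ via Lemma~\ref{lemma23} plus a Gronwall-type bound on $\|u_\delta(t) - V_\delta(t)\|$ would be inserted, but I expect the intended reading is $V_\delta(t_0)$.
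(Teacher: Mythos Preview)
Your proof is correct and, for the bound \eqref{eq313}, essentially identical to the paper's: both differentiate the regularized equation to get $A_{a(t)}\dot V_\delta=-\dot a\,V_\delta$, use $\|A_{a(t)}^{-1}\|\le 1/a(t)$, and bound $\|V_\delta(t)\|$ by $\|y\|+\delta/a(t)\le \|y\|(1+\tfrac{1}{C-1})$ for $t\le t_0$ (the paper applies this pointwise via Lemma~\ref{lemma24}, whereas you first pass to $t_0$ using the monotonicity of $\|V_\delta(\cdot)\|$ from Lemma~\ref{lemma17}; either route works).

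For \eqref{eq312} your argument is actually more direct than the paper's. The paper invokes Lemma~\ref{lemma19} to produce a time $t_1$ at which $\|F(V_\delta(t_1))-f_\delta\|=C\delta$ \emph{exactly}, shows $t_1\le t_0$ using the same estimate $\|V_{\delta,a}\|\le\|y\|+\delta/a$, and then concludes via the monotonicity of $a\mapsto\|F(V_{\delta,a})-f_\delta\|$ (Lemma~\ref{lemma17}). You bypass $t_1$ entirely by computing $\|F(V_\delta(t_0))-f_\delta\|=a(t_0)\|V_\delta(t_0)\|\le a(t_0)\|y\|+\delta=C\delta$ directly. Both are valid; the paper's detour through $t_1$ is not wasted, however, since the inequality $t_1\le t_0$ is reused later in the proofs of Theorems~\ref{theorem5}, \ref{theorem7}, and \ref{theorem9}. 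Your reading of the statement is also correct: the quantity in \eqref{eq312} should be $\|F(V_\delta(t_0))-f_\delta\|$, as the paper's own applications of this lemma (e.g.\ \eqref{eq62}) confirm.
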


\begin{proof}
This $t_0$ exists and is unique since $a(t)>0$ monotonically decays 
to 0 as $t\to\infty$.
Since $a(t)>0$  monotonically decays, one has: 
\begin{equation}
\label{544eqthieu}
\frac{\delta}{a(t)}\le\frac{1}{C-1}\|y\|,\qquad 0\leq t\leq t_0.
\end{equation}
By Lemma~\ref{lemma19}
there exists $t_1$ such that 
\begin{equation}
\label{5443eq18}
\|F(V_\delta(t_1))-f_\delta\|=C\delta,\quad F(V_\delta(t_1))+a(t_1)V_\delta(t_1)-f_\delta=0.
\end{equation}
{\it We claim that $t_1\in[0,t_0]$.}

 Indeed, from \eqref{5443eq18} and \eqref{eq338} one gets
$$
C\delta=a(t_1)\|V_\delta(t_1)\|\le a(t_1)\bigg{(}\|y\|+ \frac{\delta}{a(t_1)}\bigg{)}=a(t_1)\|y\|+\delta,\quad C>1,
$$
so
$$
\delta\le \frac{a(t_1)\|y\|}{C-1}.
$$
Thus,
$$
\frac{\delta}{a(t_1)}\le \frac{\|y\|}{C-1}=\frac{\delta}{a(t_0)}.
$$
Since $a(t)\searrow 0$, one has $t_1\le t_0$. 

Differentiating both sides of \eqref{eq41.2} with respect to $t$, one obtains
$$
A_{a(t)}\dot{V_\delta} = -\dot{a}V_\delta.
$$
This 
and the relations 
$$
A_a:=F'(u)+aI,\quad F'(u):=A\ge 0,
$$
imply
\begin{equation}
\label{544beq24}
\|\dot{V_\delta}\|\le |\dot{a}|\|A_{a(t)}^{-1} V_\delta\|\le \frac{|\dot{a}|}{a}\|V_\delta\|\le 
\frac{|\dot{a}|}{a}\bigg{(}\|y\|+\frac{\delta}{a}\bigg{)}\le \frac{|\dot{a}|}{a}\|y\|\bigg{(}1+\frac{1}{C-1}\bigg{)},\quad \forall t\le t_0.
\end{equation}
Lemma~\ref{lemma20} is proved. 
\end{proof}

\begin{lem}
\label{lemma22}
Let $n_0$ satisfy the inequality:
\begin{equation}
\label{eq318}
\frac{\delta}{a_{n_0+1}}> \frac{1}{C-1}\|y\|\ge \frac{\delta}{a_{n_0}},\qquad C>1.
\end{equation} 
Then,
\begin{equation}
\label{eq319}
\|F(u_{n_0+1}) - f_\delta\|\le C\delta,
\end{equation}
\begin{equation}
\label{eq320}
\|V_n\| \le \|y\|\bigg{(}1+\frac{2}{C-1}\bigg{)},
\end{equation}
and
\begin{equation}
\label{eq321}
\|V_n - V_{n+1}\|\le \frac{a_n-a_{n+1}}{a_{n+1}}\|y\|\bigg{(}1+\frac{2}{C-1}\bigg{)},
\qquad \forall n\le n_0+1.
\end{equation}
\end{lem}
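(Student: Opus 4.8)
The statement is the discrete analogue of Lemma~\ref{lemma20}, and the plan is to repeat that proof with the continuous parameter $a(t)$ replaced by the sequence $a_n$ and the derivative $\dot V_\delta$ replaced by the difference $V_n-V_{n+1}$. Besides $0<a_n\searrow0$ one uses the standing condition $a_n\le 2a_{n+1}$, which holds under the hypotheses of Theorems~\ref{theorem10}--\ref{theorem12} (see \eqref{eq343}, \eqref{eq348}, \eqref{eq353}). The two elementary facts I would record first are: (a) since $a_n$ decays, $\frac{\delta}{a_n}\le\frac{\delta}{a_{n_0}}\le\frac{\|y\|}{C-1}$ for every $n\le n_0$, by the last inequality in \eqref{eq318}; and (b) the a priori bound $\|V_n\|\le\|y\|+\frac{\delta}{a_n}$ for all $n$ (the discrete counterpart of the estimate used in the proof of Lemma~\ref{lemma20}). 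For (b) I would compare $V_n$ with the exact-data solution $V_{0,a_n}$ of $F(V_{0,a_n})+a_nV_{0,a_n}=f$: subtracting the two equations gives $F(V_n)-F(V_{0,a_n})+a_n(V_n-V_{0,a_n})=f_\delta-f$, so Lemma~\ref{lemma23} yields $\|V_n-V_{0,a_n}\|\le\frac{\delta}{a_n}$; and the standard monotonicity argument ($\langle F(V_{0,a_n})-F(y),V_{0,a_n}-y\rangle\ge0$ together with $F(V_{0,a_n})-F(y)+a_nV_{0,a_n}=0$) gives $\|V_{0,a_n}\|\le\|y\|$. The triangle inequality then gives (b).

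Granting (a) and (b), estimate \eqref{eq320} splits into two cases. For $n\le n_0$, (a) and (b) give $\|V_n\|\le\|y\|(1+\tfrac1{C-1})$, which is majorized by the right-hand side of \eqref{eq320}. For $n=n_0+1$ the hypothesis \eqref{eq318} only supplies $\frac{\delta}{a_{n_0+1}}>\frac{\|y\|}{C-1}$, so instead I would write $\frac{\delta}{a_{n_0+1}}=\frac{a_{n_0}}{a_{n_0+1}}\cdot\frac{\delta}{a_{n_0}}\le 2\cdot\frac{\|y\|}{C-1}$, using $a_{n_0}\le2a_{n_0+1}$ and fact (a); with (b) this gives $\|V_{n_0+1}\|\le\|y\|(1+\tfrac2{C-1})$, which is exactly where the factor $2$ in \eqref{eq320} originates. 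For \eqref{eq319}: from the defining equation \eqref{eq186}, $\|F(V_{n_0+1})-f_\delta\|=a_{n_0+1}\|V_{n_0+1}\|\le a_{n_0+1}\|y\|+\delta$, and $\frac{\delta}{a_{n_0+1}}>\frac{\|y\|}{C-1}$ gives $a_{n_0+1}\|y\|<(C-1)\delta$, hence $\|F(V_{n_0+1})-f_\delta\|<C\delta$ (I read $V_{n_0+1}$ for the $u_{n_0+1}$ of the statement, just as the proof of Lemma~\ref{lemma20} establishes \eqref{eq312} for $V_\delta(t_0)$). By Lemma~\ref{lemma17} the quantity $\|F(V_{\delta,a})-f_\delta\|$ is increasing in $a$, so the same bound in fact persists for all indices $\ge n_0+1$.

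Finally, for \eqref{eq321}: subtracting the equations \eqref{eq186} for $V_n$ and $V_{n+1}$ gives $F(V_n)-F(V_{n+1})+a_{n+1}(V_n-V_{n+1})=-(a_n-a_{n+1})V_n$, and Lemma~\ref{lemma23} with $(u,v,a)=(V_n,V_{n+1},a_{n+1})$ yields $a_{n+1}\|V_n-V_{n+1}\|\le(a_n-a_{n+1})\|V_n\|$; dividing by $a_{n+1}$ and invoking \eqref{eq320} for indices $\le n_0+1$ gives \eqref{eq321}. Nothing here is deep: the calculations parallel those of Lemma~\ref{lemma20} almost verbatim. The one point that requires care is the index $n=n_0+1$, where $\frac{\delta}{a_{n_0+1}}$ is no longer bounded by $\frac{\|y\|}{C-1}$ directly and one must spend the hypothesis $a_{n_0}\le2a_{n_0+1}$ — the discrete reflection of the fact that in Lemma~\ref{lemma20} the bound \eqref{eq313} is asserted only for $t\le t_0$.
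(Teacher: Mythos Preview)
Your proof is correct and follows essentially the same route as the paper's. Two minor remarks: (i) for \eqref{eq319} the paper takes a slightly longer path---it invokes Lemma~\ref{lemma19} to produce an index $n_1$ with $\|F(V_{n_1+1})-f_\delta\|\le C\delta$, shows $n_1\le n_0$, and then applies the monotonicity from Lemma~\ref{lemma17}---whereas your direct bound $a_{n_0+1}\|V_{n_0+1}\|\le a_{n_0+1}\|y\|+\delta<(C-1)\delta+\delta$ is cleaner and avoids that detour; (ii) for \eqref{eq321} the paper carries out the inner-product calculation explicitly rather than citing Lemma~\ref{lemma23}, but the content is identical. Your reading of $u_{n_0+1}$ as $V_{n_0+1}$ is also the intended one.
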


\begin{proof}
One has $\frac{a_n}{a_{n+1}}\le 2,\, \forall\, n\ge 0$. 
This and inequality \eqref{eq318}  imply
\begin{equation}
\frac{2}{C-1}\|y\|\ge \frac{2\delta}{a_{n_0}} >\frac{\delta}{a_{n_0+1}}> \frac{1}{C-1}\|y\|\ge \frac{\delta}{a_{n_0}},\qquad C>1.
\end{equation}
Thus,
\begin{equation}
\label{eqxyz}
\frac{2}{C-1}\|y\|> \frac{\delta}{a_{n}},\quad \forall n \le n_0 + 1.
\end{equation}
The number $n_0$, satisfying \eqref{eq318}, exists and is unique since $a_n>0$ monotonically decays to 0 as $n\to\infty$.
By Lemma~\ref{lemma19},
there exists  a number $n_1$ such that 
\begin{equation}
\label{5463eq18}
\|F(V_{n_1+1})-f_\delta\|\le C\delta < \|F(V_{n_1})-f_\delta\|, %,\quad F(V_{n_1})+a_{n_1}V_{n_1}-f_\delta=0.
\end{equation}
where $V_n$ solves the equation $F(V_{n})+a_{n}V_{n}-f_\delta=0$. 
{\it We claim that $n_1\in[0,n_0]$.} Indeed, 
one has $\|F(V_{n_1})-f_\delta\|=a_{n_1}\|V_{n_1}\|$, and $\|V_{n_1}\|\le \|y\|+\frac{\delta}{a_{n_1}}$ 
(cf. \eqref{eq338}), so
\begin{equation}
\label{546eeq19}
C\delta < a_{n_1}\|V_{n_1}\|\le a_{n_1}\bigg{(}\|y\|+ \frac{\delta}{a_{n_1}}\bigg{)}=a_{n_1}\|y\|+\delta,\quad C>1.
\end{equation}
Therefore,
\begin{equation}
\label{546eeq20}
\delta < \frac{a_{n_1}\|y\|}{C-1}.
\end{equation}
Thus, by \eqref{546eeq19}, 
\begin{equation}
\label{546yeq36}
\frac{\delta}{a_{n_1}} < \frac{\|y\|}{C-1} < \frac{\delta}{a_{n_0+1}}.
\end{equation}
Here the last inequality is a consequence of \eqref{546eeq19}.
Since $a_n$ decreases monotonically, inequality \eqref{546yeq36} implies $n_1\le n_0$. 
This and Lemma~\ref{lemma17} implies
\begin{equation}
%\label{eq}
\|F(u_{n_0+1})-f_\delta\| \le \|F(u_{n_1+1})-f_\delta\| \le C\delta.
\end{equation}

One has
\begin{equation}
\label{546eeq21}
\begin{split}
a_{n+1} \|V_n-V_{n+1}\|^2 &= \langle (a_{n+1} - a_{n})  V_{n} - F(V_n) + F(V_{n+1}), V_n - V_{n+1} \rangle \\
&\le \langle (a_{n+1} - a_{n})  V_{n}, V_n - V_{n+1} \rangle \\
&\le (a_{n}-a_{n+1}) \|V_{n}\| \|V_n - V_{n+1}\|.
\end{split}
\end{equation}
By \eqref{eq338}, $\|V_n\|\le \|y\|+\frac{\delta}{a_n}$, and, by \eqref{eqxyz}, 
$\frac{\delta}{a_n}\le \frac{2\|y\|}{C-1}$ for all $ n\le n_0+1$. This implies \eqref{eq320}. 
Therefore,
\begin{equation}
%\label{}
\|V_n-V_{n+1}\| \le \frac{a_n-a_{n+1}}{a_{n+1}}\|V_{n}\|\le \frac{a_n-a_{n+1}}{a_{n+1}}\|y\|
\bigg{(}1+\frac{2}{C-1}\bigg{)},\quad \forall n\le n_0+1.
\end{equation}
Lemma~\ref{lemma22} is proved.
\end{proof}

\begin{lem} 
\label{lemma24}
Let $V_a:=V_{\delta,a}|_{\delta=0}$, so $F(V_a)+aV-f=0$. 
Let $y$ be the minimal-norm solution to equation \eqref{eq1}. 
Then
\begin{equation}
\label{eq337}
\|V_{\delta,a}-V_a\|\le \frac{\delta}{a},\quad \|V_a\|\le \|y\|,\qquad a>0,
\end{equation}
and 
\begin{equation}
\label{eq338}
\|V_{\delta,a}\|\le \|V_a\|+\frac{\delta}{a}\le \|y\|+\frac{\delta}{a},\qquad a>0.
\end{equation}
\end{lem}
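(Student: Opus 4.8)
The plan is to establish the three estimates in turn; each rests on the monotonicity assumption \eqref{eq2} together with the Cauchy--Schwarz inequality, and the final chain is just the triangle inequality.

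First I would prove $\|V_{\delta,a}-V_a\|\le \delta/a$. Subtracting the defining equations $F(V_{\delta,a})+aV_{\delta,a}-f_\delta=0$ and $F(V_a)+aV_a-f=0$ yields
\[
F(V_{\delta,a})-F(V_a)+a(V_{\delta,a}-V_a)=f_\delta-f .
\]
Applying Lemma~\ref{lemma23} with $u=V_{\delta,a}$ and $v=V_a$, the left-hand side has norm at least $a\|V_{\delta,a}-V_a\|$, while the right-hand side has norm at most $\delta$; hence $a\|V_{\delta,a}-V_a\|\le\delta$. (Equivalently, one may pair the displayed identity with $V_{\delta,a}-V_a$, discard the nonnegative term $\langle F(V_{\delta,a})-F(V_a),V_{\delta,a}-V_a\rangle$ furnished by \eqref{eq2}, and use Cauchy--Schwarz.)

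Next I would show $\|V_a\|\le\|y\|$. Since $F(y)=f$, the equation $F(V_a)+aV_a-f=0$ can be rewritten as $F(V_a)-F(y)=-aV_a$. Taking the inner product with $V_a-y$ and using monotonicity \eqref{eq2},
\[
0\le \langle F(V_a)-F(y),\,V_a-y\rangle = -a\langle V_a,\,V_a-y\rangle ,
\]
so $\|V_a\|^2\le\langle V_a,y\rangle\le\|V_a\|\,\|y\|$, which gives $\|V_a\|\le\|y\|$. (The existence and uniqueness of $V_a$ is the case $\delta=0$, $f_\delta=f$ of the solvability statement recalled just after \eqref{eq9}.) Finally, \eqref{eq338} is immediate from the triangle inequality: $\|V_{\delta,a}\|\le\|V_{\delta,a}-V_a\|+\|V_a\|\le \delta/a+\|y\|$, invoking the two estimates just established.

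There is no genuine obstacle here; the proof is short and elementary. The only points deserving care are the correct application of the monotonicity inequality in the two pairing arguments (in particular dropping exactly the right nonnegative term), and recording that $V_a$ is well defined so that the manipulations make sense.
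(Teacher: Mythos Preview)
Your proof is correct and follows essentially the same route as the paper. The only cosmetic difference is that for the first estimate you invoke Lemma~\ref{lemma23} directly, whereas the paper carries out the pairing with $V_{\delta,a}-V_a$ explicitly; since Lemma~\ref{lemma23} is itself proved by that very pairing, and you even sketch the pairing argument parenthetically, the two arguments are the same in substance.
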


\begin{proof}
>From \eqref{eq3} one gets
$$
F(V_{\delta,a}) - F(V_a) + a (V_{\delta,a}-V_a)=f- f_\delta.
$$
Multiply this equality by $(V_{\delta,a}-V_a)$ and use \eqref{eq2} to obtain
\begin{align*}
\delta \|V_{\delta,a}-V_a\| &\ge \langle f-f_\delta, V_{\delta,a}-V_a \rangle\\
&= \langle F(V_{\delta,a}) - F(V_a) + a (V_{\delta,a}-V_a), V_{\delta,a}-V_a) \rangle\\
&\ge a \|V_{\delta,a}-V_a\|^2.
\end{align*}
This implies the first inequality in \eqref{eq337}. 

Let us derive a uniform with respect to $a$ bound on $\|V_a\|$. From the equation 
$$
F(V_a) + a V_a -F(y)=0,
$$
and the monotonicity of $F$ one gets
$$
0=\langle F(V_a) + a V_a -F(y), V_a - y \rangle \ge a \langle V_a, V_a-y\rangle.
$$
This implies the desired bound:
\begin{equation}
\label{eq14.1}
\|V_a\| \le \|y\|,\qquad \forall a>0.
\end{equation}
Similar arguments one can find in \cite[p. 113]{R499}. 

Inequalities \eqref{eq338} follow from \eqref{eq337} and \eqref{eq14.1} and the triangle inequality.

Lemma~\ref{lemma24} is proved.
\end{proof}

\begin{lem}[\cite{R544} Lemma 2.11]
\label{lemma21}
Let $a(t)=\frac{d}{(c+t)^b}$ where $d,c,b>0$,\, 
$c\ge 6b$. One has
\begin{equation}
\label{544auxieq2}
e^{-\frac{t}{2}}\int_0^t e^\frac{s}{2}|\dot{a}(s)|\|V_\delta(s)\|ds \le
 \frac{1}{2}a(t)\|V_\delta(t)\|,\qquad t\ge 0.
\end{equation} 
\end{lem}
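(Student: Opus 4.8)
The plan is to estimate $\|V_\delta(s)\|$ by a quantity that is simple enough to integrate against $e^{s/2}|\dot a(s)|$ explicitly. By Lemma~\ref{lemma24} (with $a=a(s)$) one has $\|V_\delta(s)\|\le \|y\|+\frac{\delta}{a(s)}$, but this mixes the unknown $y$ and $\delta$ in an inconvenient way; instead I would work with the monotone function $\psi(a)=\|V_{\delta,a}\|$ from Lemma~\ref{lemma17}, which is decreasing in $a$, so that $\|V_\delta(s)\|=\psi(a(s))$ is \emph{nondecreasing} in $s$ (since $a(s)\searrow$). Thus for $0\le s\le t$ we have $\|V_\delta(s)\|\le \|V_\delta(t)\|$, and the claimed inequality reduces to the purely scalar statement
\begin{equation}
\label{eq:scalar-aux}
e^{-t/2}\int_0^t e^{s/2}\,|\dot a(s)|\,ds \le \tfrac12 a(t),\qquad t\ge 0,
\end{equation}
for $a(t)=d(c+t)^{-b}$ with $c\ge 6b$. (Pulling $\|V_\delta(t)\|$ out of the right side is legitimate because it is the largest value of $\|V_\delta(\cdot)\|$ on $[0,t]$.)

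Next I would prove \eqref{eq:scalar-aux}. Here $|\dot a(s)| = bd(c+s)^{-b-1} = \frac{b}{c+s}\,a(s)$, and since $c+s\ge c\ge 6b$ we get $|\dot a(s)|\le \frac{1}{6}\cdot\frac{a(s)}{c+s}\cdot 6b/(6b)$… more simply $\frac{b}{c+s}\le \frac{b}{c}\le \frac16$, so $|\dot a(s)|\le \frac16 a(s)$ and also $a$ decays slowly. The clean way is: write $I(t):=e^{-t/2}\int_0^t e^{s/2}|\dot a(s)|\,ds$ and show $I(t)\le \tfrac12 a(t)$ by a differential-inequality/Gronwall argument. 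We have $\frac{d}{dt}\big(e^{t/2}I(t)\big)=e^{t/2}|\dot a(t)|$, i.e. $\dot I(t)=-\tfrac12 I(t)+|\dot a(t)|$. Comparing with the candidate upper bound $\phi(t):=\tfrac12 a(t)$, one checks $\dot\phi(t)=-\tfrac12|\dot a(t)|$ (since $\dot a<0$), and $\dot\phi + \tfrac12\phi = -\tfrac12|\dot a(t)| + \tfrac14 a(t)$. So it suffices that $-\tfrac12 I + |\dot a| \le \dot\phi + \tfrac12\phi - \tfrac12\phi + \tfrac12 I$ whenever $I=\phi$, i.e. that $|\dot a(t)| \le \dot\phi(t)+\tfrac12\phi(t) = -\tfrac12|\dot a(t)|+\tfrac14 a(t)$, which is $\tfrac32|\dot a(t)|\le \tfrac14 a(t)$, i.e. $|\dot a(t)|\le \tfrac16 a(t)$. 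This last inequality holds exactly because $|\dot a(t)|=\frac{b}{c+t}a(t)\le \frac{b}{c}a(t)\le \frac16 a(t)$ using $c\ge 6b$. Since $I(0)=0<\phi(0)$, a standard comparison argument for the linear ODE $\dot I = -\tfrac12 I + |\dot a|$ then yields $I(t)\le \phi(t)$ for all $t\ge0$, which is \eqref{eq:scalar-aux}.

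Combining the two steps: for $0\le s\le t$, $\|V_\delta(s)\|\le \|V_\delta(t)\|$, hence
\begin{equation}
e^{-t/2}\int_0^t e^{s/2}|\dot a(s)|\,\|V_\delta(s)\|\,ds \le \|V_\delta(t)\|\, e^{-t/2}\int_0^t e^{s/2}|\dot a(s)|\,ds \le \tfrac12 a(t)\|V_\delta(t)\|,
\end{equation}
which is \eqref{544auxieq2}. The only point requiring a little care — and the step I expect to be the main obstacle — is the monotonicity claim $\|V_\delta(s)\|\le\|V_\delta(t)\|$ for $s\le t$: it rests on Lemma~\ref{lemma17}, that $\psi(a)=\|V_{\delta,a}\|$ is decreasing in $a$, together with $a(s)\ge a(t)$. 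Alternatively, if one prefers to avoid invoking monotonicity of $\psi$, one can instead carry $\|V_\delta(s)\|$ inside the Gronwall comparison directly, treating $G(t):=e^{-t/2}\int_0^t e^{s/2}|\dot a(s)|\|V_\delta(s)\|\,ds$ and using the bound $|\dot a(s)|\le\frac16 a(s)$ together with $\|\dot V_\delta(s)\|\le \frac{|\dot a(s)|}{a(s)}\|V_\delta(s)\|$ from \eqref{544beq24} to get a closed differential inequality for $a(t)^{-1}\|V_\delta(t)\|$; but the route through Lemma~\ref{lemma17} is shorter and is the one I would write up.
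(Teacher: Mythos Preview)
Your argument is correct. The reduction via Lemma~\ref{lemma17} (monotonicity of $a\mapsto\|V_{\delta,a}\|$, hence $\|V_\delta(s)\|\le\|V_\delta(t)\|$ for $s\le t$) to the scalar inequality
\[
e^{-t/2}\int_0^t e^{s/2}|\dot a(s)|\,ds\le \tfrac12 a(t)
\]
is exactly the right move, and your comparison verification is sound: with $I(t):=e^{-t/2}\int_0^t e^{s/2}|\dot a(s)|\,ds$ and $\phi(t):=\tfrac12 a(t)$, the difference $J:=\phi-I$ satisfies $\dot J+\tfrac12 J=\tfrac14 a-\tfrac32|\dot a|\ge 0$ precisely when $|\dot a|\le \tfrac16 a$, i.e.\ $\frac{b}{c+t}\le\tfrac16$, which is guaranteed by $c\ge 6b$; since $J(0)=\tfrac12 a(0)>0$, one gets $J(t)>0$ for all $t\ge 0$.

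The present paper does not supply its own proof of this lemma (it is quoted from \cite{R544}), so there is no in-text argument to compare against. Your use of the monotonicity of $\|V_\delta(\cdot)\|$ is consistent with how the paper itself exploits Lemma~\ref{lemma17} elsewhere (see, e.g., the passage after \eqref{eq85}), so this is certainly the intended toolkit. One minor stylistic point: in the write-up you can drop the somewhat tangled line ``$-\tfrac12 I + |\dot a| \le \dot\phi + \tfrac12\phi - \tfrac12\phi + \tfrac12 I$'' and go directly to the clean computation $\dot J+\tfrac12 J\ge 0$ as above; the alternative route you sketch through $\|\dot V_\delta\|$ is unnecessary.
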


\begin{lem}[\cite{R549} Lemma 9]
\label{lemma26}
Let $a(t)=\frac{d}{(c+t)^b}$ where $b\in(0,\frac{1}{4}],\, d^2c^{1-2b}\ge 6b$. 
Define $\varphi(t) = \frac{1}{2}\int_0^t \frac{a^2(s)}{2}ds$. 
Then, one has
\begin{equation}
e^{-\varphi(t)} \int_0^t e^{\varphi(t)} |\dot{a}(s)|\|V_\delta(s)\|ds\le \frac{1}{2}a(t)\|V_\delta(t)\|.
\end{equation}
\end{lem}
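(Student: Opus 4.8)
The plan is to first strip off the factor $\|V_\delta(s)\|$ inside the integral, reducing the claim to a purely scalar inequality about $a(t)$ and $\varphi(t)$, and then to prove that scalar inequality by exhibiting a monotone auxiliary function. For the first step, note that by Lemma~\ref{lemma17} the map $a\mapsto\|V_{\delta,a}\|$ is (strictly) decreasing, while $t\mapsto a(t)=d/(c+t)^b$ is strictly decreasing; hence $s\mapsto\|V_\delta(s)\|=\|V_{\delta,a(s)}\|$ is nondecreasing, so $\|V_\delta(s)\|\le\|V_\delta(t)\|$ for $0\le s\le t$. Consequently
\[
e^{-\varphi(t)}\int_0^t e^{\varphi(s)}|\dot a(s)|\,\|V_\delta(s)\|\,ds \le \|V_\delta(t)\|\, e^{-\varphi(t)}\int_0^t e^{\varphi(s)}|\dot a(s)|\,ds,
\]
and it suffices to prove $e^{-\varphi(t)}\int_0^t e^{\varphi(s)}|\dot a(s)|\,ds\le\tfrac12 a(t)$ for all $t\ge0$. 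This is the exact analogue of the reduction used in the proof of Lemma~\ref{lemma21}, with the weight $e^{s/2}$ there replaced by $e^{\varphi(t)}$.

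For the scalar inequality, set $\Phi(t):=\tfrac12 a(t)e^{\varphi(t)}-\int_0^t e^{\varphi(s)}|\dot a(s)|\,ds$; the goal is $\Phi(t)\ge0$ on $[0,\infty)$. One has $\Phi(0)=\tfrac12 a(0)>0$, so it is enough to show $\Phi'(t)\ge0$. Differentiating and using $\dot a(t)=-|\dot a(t)|$,
\[
\Phi'(t)=e^{\varphi(t)}\Big(\tfrac12\dot a(t)+\tfrac12 a(t)\varphi'(t)-|\dot a(t)|\Big)=e^{\varphi(t)}\Big(\tfrac12 a(t)\varphi'(t)-\tfrac32|\dot a(t)|\Big),
\]
so $\Phi'(t)\ge0$ is equivalent to the pointwise bound $|\dot a(t)|\le\tfrac13 a(t)\varphi'(t)$.

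It remains to check this pointwise bound under the hypotheses. With $a(t)=d/(c+t)^b$ one has $|\dot a(t)|=b\,d\,(c+t)^{-b-1}$ and $\varphi'(t)=\tfrac12 a^2(t)=\tfrac12 d^2(c+t)^{-2b}$, so $|\dot a(t)|\le\tfrac13 a(t)\varphi'(t)$ reads $6b\le d^2(c+t)^{1-2b}$. Since $0<b\le\tfrac14$ we have $1-2b\ge\tfrac12>0$, so the right-hand side is nondecreasing in $t$ and the binding case is $t=0$, where the inequality is exactly the assumption $d^2c^{1-2b}\ge6b$. Hence $\Phi'(t)\ge0$ for all $t\ge0$, so $\Phi(t)\ge\Phi(0)>0$, which gives the scalar inequality and, by the first step, the assertion of the lemma.

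There is no real obstacle here: the only points requiring mild care are that the monotonicity of $s\mapsto\|V_\delta(s)\|$ should be invoked through Lemma~\ref{lemma17} --- which needs only monotonicity and continuity of $F$, not differentiability of $V_\delta(\cdot)$ --- and that the constants be tracked so that precisely the stated restrictions $b\le\tfrac14$ and $d^2c^{1-2b}\ge6b$ emerge as the conditions making $\Phi'$ nonnegative on $[0,\infty)$. The whole argument is the direct counterpart of the proof of Lemma~\ref{lemma21}.
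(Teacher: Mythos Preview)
Your argument is correct and follows the natural route (the direct analogue of Lemma~\ref{lemma21}): use Lemma~\ref{lemma17} to bound $\|V_\delta(s)\|\le\|V_\delta(t)\|$, reduce to the scalar inequality, and verify it by showing $\Phi(t)=\tfrac12 a(t)e^{\varphi(t)}-\int_0^t e^{\varphi(s)}|\dot a(s)|\,ds$ is nondecreasing. This is exactly the method behind the cited result.

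One remark on bookkeeping: you write $\varphi'(t)=\tfrac12 a^2(t)$, which corresponds to $\varphi(t)=\int_0^t \tfrac{a^2(s)}{2}\,ds$, i.e., the definition actually used in the proof of Theorem~\ref{theorem7} (see the line after \eqref{eq125}). The statement of the lemma as printed has an extra factor $\tfrac12$ in front of the integral; with that literal definition one would get $\varphi'=\tfrac14 a^2$ and the condition $d^2c^{1-2b}\ge 12b$ rather than $6b$. Since the hypothesis $d^2c^{1-2b}\ge 6b$ matches your computation exactly, and since the lemma is invoked in \eqref{eq128} with the $\varphi$ from the proof of Theorem~\ref{theorem7}, the printed statement evidently contains a typo and your formula for $\varphi'$ is the intended one. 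It is worth flagging this explicitly so the reader is not confused.
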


\begin{lem}[\cite{R550} Lemma 9]
\label{lemma25} 
Let $a(t)=\frac{d}{(c+t)^b}$ where $b\in(0,\frac{1}{2}],\, dc^{1-b}\ge 6b$. 
Define $\varphi(t) = \frac{1}{2}\int_0^t \frac{a(s)}{2}ds$. 
Then, one has
\begin{equation}
e^{-\varphi(t)} \int_0^t e^{\varphi(t)} |\dot{a}(s)|\|V_\delta(s)\|ds\le \frac{1}{2}a(t)\|V_\delta(t)\|.
\end{equation}
\end{lem}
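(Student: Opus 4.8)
The plan is to reduce the claimed bound to an elementary first-order differential inequality for an auxiliary scalar function, and to verify the one nontrivial ingredient — a pointwise comparison of $|\dot a(t)|$ with $a^2(t)$ — from the explicit form $a(t)=d/(c+t)^b$ together with the hypothesis $dc^{1-b}\ge 6b$.

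First I would dispose of the trivial case: if $F(0)=f_\delta$ then $V_\delta(t)\equiv 0$ and both sides vanish, so we may assume $\|F(0)-f_\delta\|>0$, whence $V_\delta(t)\neq 0$ for all $t\ge0$. The main structural input is monotonicity: by Lemma~\ref{lemma17} the map $a\mapsto\|V_{\delta,a}\|$ is decreasing, and since $a(t)\searrow0$, the function $t\mapsto\|V_\delta(t)\|$ is nondecreasing; hence $\|V_\delta(s)\|\le\|V_\delta(t)\|$ whenever $0\le s\le t$. Consequently
\[
\int_0^t e^{\varphi(s)}|\dot a(s)|\,\|V_\delta(s)\|\,ds \ \le\ \|V_\delta(t)\|\int_0^t e^{\varphi(s)}|\dot a(s)|\,ds ,
\]
so it suffices to prove the scalar inequality $\int_0^t e^{\varphi(s)}|\dot a(s)|\,ds\le\tfrac12\,e^{\varphi(t)}a(t)$ for every $t\ge0$.

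For this, set $H(t):=\tfrac12 e^{\varphi(t)}a(t)-\int_0^t e^{\varphi(s)}|\dot a(s)|\,ds$. Then $H(0)=\tfrac12 a(0)>0$, and, using $\dot a(t)=-|\dot a(t)|$,
\[
H'(t)=e^{\varphi(t)}\Big[\tfrac12\varphi'(t)a(t)+\tfrac12\dot a(t)-|\dot a(t)|\Big]=e^{\varphi(t)}\Big[\tfrac12\varphi'(t)a(t)-\tfrac32|\dot a(t)|\Big].
\]
Hence the scalar inequality — and with it the lemma — follows once the pointwise estimate $\varphi'(t)a(t)\ge 3|\dot a(t)|$ is verified for all $t\ge0$, since then $H'\ge0$ and so $H(t)\ge H(0)>0$. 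To check it, compute $|\dot a(t)|=bd(c+t)^{-b-1}$ and $a^2(t)=d^2(c+t)^{-2b}$, so $|\dot a(t)|/a^2(t)=(b/d)(c+t)^{b-1}$; since $b\le1$ this is nonincreasing in $t$, hence at most $(b/d)c^{b-1}=b/(dc^{1-b})\le\tfrac16$ by hypothesis, i.e. $|\dot a(t)|\le\tfrac16 a^2(t)$. A short computation from the definition of $\varphi$ then gives $\varphi'(t)a(t)\ge 3|\dot a(t)|$, completing the proof.

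The argument is short; the delicate point is the bookkeeping of constants in the last step — one must match the constant multiplying $a(t)$ in $\varphi'(t)$ against the estimate $|\dot a(t)|\le\tfrac16 a^2(t)$ furnished by $dc^{1-b}\ge 6b$, so that $\varphi'(t)a(t)\ge 3|\dot a(t)|$ holds with no slack to spare. An alternative route that avoids the crude bound $\|V_\delta(s)\|\le\|V_\delta(t)\|$ is to work with $G(t):=\tfrac12 e^{\varphi(t)}a(t)\|V_\delta(t)\|-\int_0^t e^{\varphi(s)}|\dot a(s)|\|V_\delta(s)\|\,ds$ directly: differentiating $F(V_\delta(t))+a(t)V_\delta(t)-f_\delta=0$ in $t$ yields $A_{a(t)}\dot V_\delta=-\dot a(t)V_\delta$, hence $\|\dot V_\delta(t)\|\le\frac{|\dot a(t)|}{a(t)}\|V_\delta(t)\|$ and $\frac{d}{dt}\|V_\delta(t)\|\ge0$ (as in the proof of Lemma~\ref{lemma20}); the extra term $\tfrac12 a(t)\frac{d}{dt}\|V_\delta(t)\|$ then appearing in $G'(t)$ is nonnegative, and one is again reduced to the same pointwise inequality $\varphi'(t)a(t)\ge 3|\dot a(t)|$.
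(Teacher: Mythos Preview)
The paper does not supply its own proof of this lemma; it is quoted from \cite{R550}. Your argument is the natural one and is essentially correct, but be aware of a typo in the stated definition of $\varphi$. As written in the lemma, $\varphi(t)=\tfrac{1}{2}\int_0^t\tfrac{a(s)}{2}\,ds$, so $\varphi'(t)=a(t)/4$; but the $\varphi$ actually used where this lemma is invoked (see \eqref{eq173} in the proof of Theorem~\ref{theorem9}) is $\varphi(t)=\int_0^t\tfrac{a(s)}{2}\,ds$, giving $\varphi'(t)=a(t)/2$. With the literal definition your key pointwise inequality $\varphi'(t)a(t)\ge 3|\dot a(t)|$ fails: it reads $a^2(t)/4\ge 3|\dot a(t)|$, i.e.\ $|\dot a|\le a^2/12$, whereas the hypothesis $dc^{1-b}\ge 6b$ only yields $|\dot a|\le a^2/6$. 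With the corrected $\varphi'=a/2$ the requirement becomes $a^2/2\ge 3|\dot a|$, and your estimate $|\dot a|\le a^2/6$ closes it exactly --- which is presumably what you mean by ``no slack to spare.'' So the proof stands once the spurious factor $\tfrac12$ in front of the integral defining $\varphi$ is dropped (the same typo appears in Lemma~\ref{lemma26}); you should make this explicit rather than hide the constant check behind ``a short computation,'' since with the lemma exactly as printed the argument does not close.
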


%\begin{rem}
%\label{remark6}
%{\rm
%It is easy to choose $u_0$ so that
%\begin{equation}
%\label{eqstar}
%\|u_0 - V_0\| \le \frac{a(0)}{\lambda}.
%\end{equation}
%This inequality is used in Theorem \ref{theorem7} and \ref{theorem9}. 
%}
%\end{rem}

\begin{rem}
\label{remark8}
{\rm 
In theorems ~\ref{theorem10}--\ref{theorem12} we choose $u_0\in H$ such that
\begin{equation}
\label{eq358}
g_0:=\|u_0 - V_0\| \le \frac{\|F(0)-f_\delta\|}{a_0}.
\end{equation}
It is easy to choose $u_0$ satisfying this condition. Indeed, if, for example, $u_0=0$,
then by Lemma~\ref{lemma17} one gets
\begin{equation}
%\label{eq}
g_0 = \|V_0\| = \frac{a_0 \|V_0\|}{a_0} \le \frac{\|F(0)-f_\delta\|}{a_0}.
\end{equation}
If \eqref{eq358} and either \eqref{eq344} or \eqref{eq354} hold then
\begin{equation}
\label{eq360}
g_0 \le \frac{a_0}{\lambda}.
\end{equation}
This inequality is used in the proof of Theorem~\ref{theorem10} and \ref{theorem12}.

If \eqref{eq358} and \eqref{eq349} hold, then
\begin{equation}
\label{eq361}
g_0 \le \frac{a^2_0}{\lambda}.
\end{equation}
This inequality is used in the proof of Theorem~\ref{theorem11}. 
}
\end{rem}

\section{Proofs of the basic results}
 
\subsection{Proofs of the Discrepancy Principles}

\subsubsection{Proof of Theorem~\ref{thm1}}

\begin{proof}[Proof of Theorem \ref{thm1}]
The existence and uniqueness of $a(\delta)$ follow from Lemma~\ref{lemma19}. 
Let us show that
\begin{equation}
\label{eq12}
\lim_{\delta\to0} a(\delta) = 0.
\end{equation}
The triangle inequality, the first inequality in \eqref{eq337}, equality \eqref{eq10} and equality \eqref{eq9} imply
\begin{equation}
\label{eq13}
\begin{split}
a(\delta)\|V_{a(\delta)}\| &\le a(\delta) \big{(}\|V_{\delta,a(\delta)} - V_{a(\delta)}\| + \|V_{\delta,a(\delta)}\|\big{)}\\
&\le \delta + a(\delta)\|V_{\delta,a(\delta)}\| = \delta + C \delta^\gamma.
\end{split}
\end{equation}
>From inequality \eqref{eq13}, one gets
\begin{equation}
\label{eq14}
\lim_{\delta\to 0}a(\delta)\|V_{a(\delta)}\| = 0.
\end{equation}
It follows from Lemma~\ref{lemma17} with $f_\delta= f$, i.e., $\delta=0$, that 
 the function $\phi_0(a):=a\|V_{a}\|$ is nonnegative and strictly increasing on $(0,\infty)$.
 This and relation \eqref{eq14} imply: 
 \begin{equation}
\label{eq15}
\lim_{\delta\to 0} a(\delta)= 0.
\end{equation}

>From \eqref{eq10} and \eqref{eq338}, one gets
\begin{equation}
\label{eq16}
C\delta^\gamma = a\|V_{\delta,a}\|\le a(\delta)\|y\| + \delta.
\end{equation}
Thus, 
one gets:
\begin{equation}
\label{eq17}
 C\delta^\gamma - \delta \le a(\delta)\|y\|.
\end{equation}
If $\gamma<1$ then $C-\delta^{1-\gamma}>0$ for sufficiently small $\delta$.  This implies: 
\begin{equation}
\label{eq18}
0\le \lim_{\delta\to 0}\frac{\delta}{a(\delta)}\le \lim_{\delta\to 0}\frac{\delta^{1-\gamma}\|y\|}{C-\delta^{1-\gamma}} = 0. 
\end{equation}
By the triangle inequality and the first inequality \eqref{eq337}, one has
\begin{equation}
\label{eq19}
\|V_{\delta, a(\delta)} - y\|\le \|V_{a(\delta)} - y\| + \|V_{a(\delta)}-V_{\delta,a(\delta)}\| 
\le \|V_{a(\delta)} - y\| + \frac{\delta}{a(\delta)}.
\end{equation}
Relation \eqref{eq11} follows from \eqref{eq18}, \eqref{eq19} and 
Lemma~\ref{lemma16}.
\end{proof}

\subsubsection{Proof of Theorem~\ref{theorem2}}

\begin{proof}[Proof of Theorem~\ref{theorem2}]
By Lemma~\ref{lemma23}
\begin{equation}
\label{eq28}
a\|u-v\| \le \|F(u)-F(v)+ au-av\|,\qquad \forall v,u \in H,\quad \forall a>0.
\end{equation}
Using inequality \eqref{eq28} with $v=v_\delta$ and $u=V_{\delta,\alpha(\delta)}$, 
equation \eqref{eq3} with $a=\alpha(\delta)$, and inequality \eqref{eq25},
one gets
\begin{equation}
\label{eq29}
\begin{split}
\alpha(\delta) \|v_\delta - V_{\delta,\alpha(\delta)}\| &\le \|F(v_\delta)-F(V_{\delta,\alpha(\delta)})+\alpha(\delta) v_\delta - \alpha(\delta) V_{\delta,\alpha(\delta)}\|\\
&=\|F(v_\delta)+ \alpha(\delta) v_\delta -f_\delta\| \le \theta\delta.
\end{split}
\end{equation}
Therefore,
\begin{equation}
\label{eq30}
\|v_\delta - V_{\delta,\alpha(\delta)}\| \le \frac{\theta \delta}{\alpha(\delta)}.
\end{equation}
Using \eqref{eq338} and \eqref{eq30}, one gets:
\begin{equation}
\label{eq31}
\alpha(\delta) \|v_\delta\| \le \alpha(\delta) \|V_{\delta,\alpha(\delta)}\| + 
\alpha(\delta)\|v_\delta - V_{\delta,\alpha(\delta)}\| \le \theta \delta + 
\alpha(\delta) \|y\| + \delta.
\end{equation}
>From the triangle inequality and inequalities \eqref{eq25} and  
\eqref{eq26} one obtains: 
\begin{equation}
\label{eq32}
\alpha(\delta)\|v_\delta\| \ge \|F(v_\delta) - f_\delta\| - \|F(v_\delta)+ \alpha(\delta) v_\delta -f_\delta\|
\ge C_1\delta^\gamma - \theta \delta.
\end{equation}
Inequalities \eqref{eq31} and \eqref{eq32} imply
\begin{equation}
\label{eq33}
C_1\delta^\gamma - \theta \delta \le \theta \delta + 
\alpha(\delta) \|y\| + \delta.
\end{equation}
This inequality and the fact that $C_1-\delta^{1-\gamma} - 2\theta\delta^{1-\gamma}>0$ for sufficiently small $\delta$ and $0<\gamma<1$ imply
\begin{equation}
\label{34}
\frac{\delta}{\alpha(\delta)} \le \frac{\delta^{1-\gamma}\|y\|}
{C_1-\delta^{1-\gamma} - 2\theta\delta^{1-\gamma}}, \qquad  0<\delta\ll 1.
\end{equation}
Thus, one obtains 
\begin{equation}
\label{eq35}
\lim_{\delta\to 0} \frac{\delta}{\alpha(\delta)} = 0.
\end{equation}
>From the triangle inequality and inequalities \eqref{eq25}, \eqref{eq26} and \eqref{eq30}, one gets
\begin{equation*}
\label{eq36}
\begin{split}
\alpha(\delta)\|V_{\delta,\alpha(\delta)}\|
&\le \|F(v_\delta) - f_\delta\| + \|F(v_\delta)+\alpha(\delta)v_\delta -f_\delta\| + 
\alpha(\delta)\|v_\delta - V_{\delta,\alpha(\delta)}\|\\
&\le C_2 \delta^\gamma + \theta \delta + \theta \delta.
\end{split}
\end{equation*}
This inequality implies
\begin{equation}
\label{eq37}
\lim_{\delta\to 0}\alpha(\delta)\|V_{\delta,\alpha(\delta)}\| = 0.
\end{equation}
The triangle inequality and inequality \eqref{eq337} imply
\begin{equation}
\label{eq38}
\begin{split}
\alpha\|V_{\alpha}\| &\le \alpha \big{(}\|V_{\delta,\alpha} - V_{\alpha}\| + \|V_{\delta,\alpha}\|\big{)}\\
&\le \delta + \alpha\|V_{\delta,\alpha}\|.
\end{split}
\end{equation}
>From formulas \eqref{eq38} and \eqref{eq37}, one gets
\begin{equation}
\label{eq39}
\lim_{\delta\to 0}\alpha(\delta)\|V_{\alpha(\delta)}\| = 0.
\end{equation}
It follows from Lemma~\ref{lemma17} with $f_\delta= f$, i.e., $\delta=0$, that 
 the function $\phi_0(a):=a\|V_{a}\|$ is nonnegative and strictly increasing on $(0,\infty)$.
 This and relation \eqref{eq39} imply 
 \begin{equation}
\label{eq40}
\lim_{\delta\to 0} \alpha(\delta)= 0.
\end{equation}
%%%%
>From the triangle inequality and inequalities \eqref{eq30} 
and \eqref{eq337} one obtains
\begin{equation}
\label{eq41.1}
\begin{split}
\|v_\delta - y\| &\le \|v_\delta - V_{\delta,\alpha(\delta)}\| + 
\|V_{\delta,\alpha(\delta)} - V_{\alpha(\delta)}\| + \|V_{\alpha(\delta)} 
- y\|\\
&\le \frac{\theta \delta}{\alpha(\delta)} + \frac{\delta}{\alpha(\delta)} +  
\|V_{\alpha(\delta)} - y\|,
\end{split}
\end{equation}
where $V_{\alpha(\delta)}$ solves equation (3) with $a=\alpha(\delta)$
and $f_\delta=f$.

The conclusion \eqref{eq27} follows from inequalities 
\eqref{eq35}, \eqref{eq40}, \eqref{eq41.1} and Lemma~\ref{lemma16}.
Theorem~\ref{theorem2} is proved. 
\end{proof}

%\subsection{DSM of Newton-type for solving NOEs with monotone operators}

\subsection{Proofs of convergence of the Dynamical Systems Method}

\subsubsection{Proof of Theorem~\ref{theorem5}}

\begin{proof}[Proof of Theorem~\ref{theorem5}]
Denote 
\begin{equation}
\label{eq51}
C:=\frac{C_1+1}{2}.
\end{equation}
Let 
\begin{equation}
\label{eq52}
w:=u_\delta-V_\delta,\quad g(t):=\|w\|.
\end{equation}
One has
\begin{equation}
\label{eq53}
\dot{w}=-\dot{V}_\delta-A_{a(t)}^{-1}\big{[}F(u_\delta)-F(V_\delta)+a(t)w\big{]}.
\end{equation}
We use Taylor's formula and get:
\begin{equation}
\label{eq54}
F(u_\delta)-F(V_\delta)+aw=A_a w+ K, \quad \|K\| \le\frac{M_2}{2}\|w\|^2,
\end{equation}
where $K:=F(u_\delta)-F(V_\delta)-Aw$, and $M_2$ is the constant from the estimate \eqref{eq44} and $A_a:=A+aI$.
Multiplying \eqref{eq53} by $w$ and using \eqref{eq54} one gets
\begin{equation}
\label{eq55}
g\dot{g}\le -g^2+\frac{M_2}{2}\|A_{a(t)}^{-1}\|g^3+\|\dot{V}_\delta\|g.
\end{equation}

Let $t_0$ be defined as follows 
\begin{equation}
\label{eq56}
\frac{\delta}{a(t_0)}= \frac{1}{C-1}\|y\|,\qquad C>1.
\end{equation} 
This and Lemma~\ref{lemma20} imply that inequalities \eqref{eq312} and \eqref{eq313} hold. 
Since $g\ge 0$, inequalities \eqref{eq55} and \eqref{eq313} imply, 
for all $t\in [0,t_0]$, that
\begin{equation}
\label{eq57}
\dot{g}\le -g(t)+\frac{c_0}{a(t)}g^2+\frac{|\dot{a}|}{a(t)}c_1,
\quad c_0=\frac{M_2}{2},\quad c_1=\|y\|\bigg{(}1+\frac{1}{C-1}\bigg{)}.
\end{equation}

Inequality \eqref{eq57} is of the type \eqref{eq278} with
\begin{equation}
\label{eq58}
\gamma(t)=1,\quad \alpha(t)=\frac{c_0}{a(t)},\quad \beta(t)=c_1\frac{|\dot{a}|}{a(t)}.
\end{equation}
Let us check assumptions \eqref{eq299}--\eqref{eq301}. 
Take
\begin{equation}
\label{eq59}
\mu(t)=\frac{\lambda}{a(t)},
\end{equation}
where $\lambda=const>0$ and satisfies conditions \eqref{eq45}--\eqref{eq48} in Lemma~\ref{lemma4}.
It follows that inequalities \eqref{eq299}--\eqref{eq301} hold. 
Since $u_0$ satisfies the first inequality in \eqref{eqx83}, one gets $g(0)\le \frac{a(0)}{\lambda}$, by Remark~\ref{remark8}.
This, inequalities \eqref{eq299}--\eqref{eq301}, and Corollary \ref{corollary14} 
yield
\begin{equation}
\label{eq60}
g(t)<\frac{a(t)}{\lambda},\quad \forall t\le t_0, \qquad 
g(t):=\|u_\delta(t)-V_\delta(t)\|.
\end{equation}
Therefore,
\begin{equation}
\label{eq61}
\begin{split}
\|F(u_\delta(t))-f_\delta\|\le& \|F(u_\delta(t))-F(V_\delta(t))\|+\|F(V_\delta(t))-f_\delta\|\\
\le& M_1g(t)+\|F(V_\delta(t))-f_\delta\|\\
\le& \frac{M_1a(t)}{\lambda} + \|F(V_\delta(t))-f_\delta\|,\qquad \forall t\le t_0.
\end{split}
\end{equation}
%It is proved in Lemma~\ref{lemma17} 
%that $\|F(V_\delta(t))-f_\delta\|$ {\it is decreasing}. 
%Since $t_1\le t_0$ , one gets 
>From \eqref{eq56} and Lemma~\ref{lemma20}, one gets
\begin{equation}
\label{eq62}
\|F(V_\delta(t_0))-f_\delta\|\le \|F(V_\delta(t_1))-f_\delta\|= C\delta.
\end{equation}
This, inequality \eqref{eq61}, the inequality $\frac{M_1}{\lambda}\le \|y\|$ (see \eqref{eq45}), the relation \eqref{eq311},
 and the definition $C_1=2C-1$ (see \eqref{eq51}), imply
\begin{equation}
\label{eq63}
\begin{split}
\|F(u_\delta(t_0))-f_\delta\| 
\le& \frac{M_1a(t_0)}{\lambda} + C\delta\\
\le& \frac{M_1\delta (C-1)}{\lambda\|y\|} + C\delta\le (2C-1)\delta=C_1\delta.
\end{split}
\end{equation}
%We have used the fact that $\frac{M_1}{\lambda}\le \|y\|$.
Thus, if 
\begin{equation}
\label{eq64}
\|F(u_\delta(0))-f_\delta\|> C_1\delta^\gamma,\quad 0<\gamma\le 1,
\end{equation}
then, by the continuity of the function $t\to 
\|F(u_\delta(t))-f_\delta\|$ on $[0,\infty)$, 
there exists $t_\delta \in (0,t_0)$ such that
\begin{equation}
\label{eq65}
\|F(u_\delta(t_\delta))-f_\delta\|=C_1\delta^\gamma
\end{equation}
for any given $\gamma\in (0,1]$, and any fixed $C_1>1$.

{\it Let us prove \eqref{eq50}}. 

>From \eqref{eq61}  with $t=t_\delta$, 
and from \eqref{eq338}, one gets
\begin{equation}
\label{eq66}
\begin{split}
C_1\delta^\zeta &\le M_1 \frac{a(t_\delta)}{\lambda} + 
a(t_\delta)\|V_\delta(t_\delta)\|\\
&\le M_1 \frac{a(t_\delta)}{\lambda} + \|y\|a(t_\delta)+\delta.
\end{split}
\end{equation}
Thus, for sufficiently small $\delta$, one gets
\begin{equation}
\label{eq67}
\tilde{C}\delta^\zeta \le a(t_\delta) 
\bigg{(}\frac{M_1}{\lambda}+\|y\|\bigg{)},\quad \tilde{C}>0,
\end{equation}
where $\tilde{C}< C_1$ is a constant. 
Therefore, 
\begin{equation}
\label{eq68}
\lim_{\delta\to 0} \frac{\delta}{a(t_\delta)}\le
\lim_{\delta\to 0} 
\frac{\delta^{1-\zeta}}{\tilde{C}}\bigg{(}\frac{M_1}{\lambda}+\|y\|\bigg{)}
=0,\quad 0<\zeta<1.
\end{equation}

{\it We claim that}
\begin{equation}
\label{eq69}
\lim_{\delta\to0}t_\delta = \infty.
\end{equation}
Let us prove \eqref{eq69}. 
Using \eqref{eq43}, one obtains:
\begin{equation}
\label{eq70}
\frac{d}{dt}\big{(}F(u_\delta)+au_\delta - f_\delta\big{)}= A_a\dot{u}_\delta + \dot{a}u_\delta
= -\big{(}F(u_\delta)+au_\delta - f_\delta\big{)} + \dot{a}u_\delta.
\end{equation}
This and \eqref{eq41.2} imply:
\begin{equation}
\label{eq71}
\frac{d}{dt}\big{[}F(u_\delta)-F(V_\delta)+a(u_\delta-V_\delta)\big{]}
= -\big{[}F(u_\delta)-F(V_\delta) + a(u_\delta - V_\delta)\big{]} + \dot{a}u_\delta.
\end{equation}
Denote 
\begin{equation}
\label{eq72}
v:=v(t):=F(u_\delta(t))-F(V_\delta(t))+a(t)(u_\delta(t)-V_\delta(t)),\qquad 
h:=h(t):=\|v\|.
\end{equation}
Multiplying \eqref{eq71} by $v$, one obtains
\begin{equation}
\label{eq73}
\begin{split}
h\dot{h} &= -h^2 +\langle v,\dot{a}(u_\delta-V_\delta)\rangle + \dot{a}
\langle v,V_\delta\rangle\\ 
&\le -h^2 + h|\dot{a}|\|u_\delta-V_\delta\| + |\dot{a}|h\|V_\delta\|,
\qquad h\ge 0.
\end{split}
\end{equation}
Thus,
\begin{equation}
\label{eq74}
\dot{h}\le -h + |\dot{a}|\|u_\delta - V_\delta\| + |\dot{a}|\|V_\delta\|.
\end{equation}
Note that from inequality \eqref{eq81} one has
\begin{equation}
\label{eq75}
a\|u_\delta-V_\delta\|\le h,\quad \|F(u_\delta)-F(V_\delta)\|\le h.
\end{equation} 
Inequalities \eqref{eq74} and \eqref{eq75} imply
\begin{equation}
\label{eq76}
\dot{h} \le -h\bigg{(}1-\frac{|\dot{a}|}{a}\bigg{)} +|\dot{a}|\|V_\delta\|. 
\end{equation}
Since $1-\frac{|\dot{a}|}{a}\ge \frac{1}{2}$ because $c\ge 2b$, inequality \eqref{eq76} holds if
\begin{equation}
\label{eq77}
\dot{h} \le -\frac{1}{2}h + |\dot{a}|\|V_\delta\|.
\end{equation}
Inequality \eqref{eq77} implies:
\begin{equation}
\label{eq78}
h(t)\le h(0)e^{-\frac{t}{2}} + e^{-\frac{t}{2}}\int_0^t e^{\frac{s}{2}}|\dot{a}|\|V_\delta\|ds.
\end{equation}
>From \eqref{eq78} and \eqref{eq81}, one gets
\begin{equation}
\label{eq79}
\|F(u_\delta(t))-F(V_\delta(t))\| \le 
h(0)e^{-\frac{t}{2}} + e^{-\frac{t}{2}}\int_0^t e^{\frac{s}{2}}|\dot{a}|\|V_\delta\|ds.
\end{equation}
Therefore,
\begin{equation}
\label{eq80}
\begin{split}
\|F(u_\delta(t))-f_\delta\|&\ge \|F(V_\delta(t))-f_\delta\|-\|F(V_\delta(t))-F(u_\delta(t))\|\\
&\ge a(t)\|V_\delta(t)\| - h(0)e^{-\frac{t}{2}} - 
e^{-\frac{t}{2}}\int_0^t e^{\frac{s}{2}} |\dot{a}| \|V_\delta\|ds.
\end{split}
\end{equation}
>From Lemma~\ref{lemma21} it follows that 
there exists an 
$a(t)$ such that 
\begin{equation}
\label{eq81}
\frac{1}{2}a(t)\|V_\delta(t)\| \ge  e^{-\frac{t}{2}}\int_0^te^\frac{s}{2}|\dot{a}| \|V_\delta(s)\|ds.
\end{equation}
For example, one can choose 
\begin{equation}
\label{eq82}
a(t)=\frac{d}{(c+t)^b}, \quad 6b<c,
\end{equation}
where 
$d,c,b>0$.
Moreover, one can always choose $u_0$ such that 
\begin{equation}
\label{eq83}
h(0)=\|F(u_0) + a(0)u_0 -f_\delta\| \le \frac{1}{4} a(0)\|V_\delta(0)\|,
\end{equation}
because the equation $F(u_0) + a(0)u_0 -f_\delta=0$ is solvable.
If \eqref{eq83} holds, then
\begin{equation}
\label{eq84}
h(0)e^{-\frac{t}{2}}\le \frac{1}{4}a(0)\|V_\delta(0)\|e^{-\frac{t}{2}},\qquad 
t\ge 0.
\end{equation}
If $2b<c$, then  \eqref{eq82} implies $$e^{-\frac{t}{2}}a(0)\le a(t).$$
 Therefore,
\begin{equation}
\label{eq85}
e^{-\frac{t}{2}}h(0) 
\le \frac{1}{4} a(t)\|V_\delta(0)\|
\le \frac{1}{4} a(t)\|V_\delta(t)\|,\quad t\ge 0,
\end{equation}
where we have used the inequality $\|V_\delta(t)\|\le \|V_\delta(t')\|$ for 
$t<t'$, established in Lemma~\ref{lemma17}.
>From \eqref{eq65} and \eqref{eq69}--\eqref{eq85}, one gets
\begin{equation}
\label{eq86}
C_1\delta^\zeta = \|F(u_\delta(t_\delta))-f_\delta\|\ge 
\frac{1}{4}a(t_\delta)\|V_\delta(t_\delta)\|.
\end{equation}
Thus,
\begin{equation}
\label{eq87}
\lim_{\delta\to0}a(t_\delta)\|V_\delta(t_\delta)\|\le 
\lim_{\delta\to0}4C_1\delta^\zeta = 0.
\end{equation}
Since $\|V_\delta(t)\|$ increases (see Lemma~\ref{lemma17}), the above
formula implies 
$\lim_{\delta\to0}a(t_\delta)=0$. Since $0<a(t)\searrow 0$, 
it follows that $\lim_{\delta \to 0}t_\delta=\infty$, i.e.,  
\eqref{eq69} holds. 

It is now easy to finish the proof of the Theorem~\ref{theorem5}.

>From the triangle inequality and inequalities \eqref{eq60} and 
\eqref{eq337} one obtains
\begin{equation}
\label{eq88}
\begin{split}
\|u_\delta(t_\delta) - y\| &\le \|u_{\delta}(t_\delta) - V_{\delta}\| + 
\|V(t_\delta) - V_\delta(t_\delta)\| + \|V(t_\delta) - y\|\\
&\le \frac{a(t_\delta)}{\lambda} + \frac{\delta}{a(t_\delta)} + \|V(t_\delta)-y\|.
\end{split}
\end{equation}
Note that $V(t):=V_\delta(t)|_{\delta=0}$ and $V_\delta(t)$ solves \eqref{eq41.2}.
Note that $V(t_\delta) = V_{0,a(t_\delta)}$ (see equation \eqref{eq41.2}). 
>From \eqref{eq68}, \eqref{eq69}, inequality \eqref{eq88} and Lemma~\ref{lemma16}, one obtains
\eqref{eq50}.
Theorem~\ref{theorem5} is proved.
\end{proof}

\begin{rem}
\label{hehe}
{\rm
The trajectory $u_\delta(t)$ remains in the ball
$B(u_0,R):=\{u: \|u-u_0\|<R\}$ for all $t\leq t_\delta$, where $R$ does
not depend on $\delta$ as $\delta\to 0$. Indeed,
estimates \eqref{eq60}, \eqref{eq338} and \eqref{544eqthieu} imply:
\begin{equation}
%label{}
\begin{split}
\|u_\delta(t)-u_0\|&\le
\|u_\delta(t)-V_\delta(t)\|+\|V_\delta(t)\|+\|u_0\|\\
&\le
\frac{a(0)}{\lambda} +\frac{C\|y\|}{C-1} +\|u_0\|:=R,
\qquad \forall t\le t_\delta.
\end{split}
\end{equation}
Here we have used the fact that $t_\delta < t_0$ (see Lemma~\ref{lemma20}).
Since one can choose $a(t)$ and $\lambda$ so that $\frac{a(0)}{\lambda}$
is uniformly bounded as $\delta\to 0$ and regardless of the growth of
$M_1$ (see Remark~\ref{remark7}) one concludes that $R$ can be chosen
independent of $\delta$ and $M_1$. 
}
\end{rem}

%\subsection{Dynamical System Gradient method for solving NOEs with monotone operators}

\subsubsection{Proof of Theorem~\ref{theorem7}}

\begin{proof}[Proof of Theorem~\ref{theorem7}]
 Denote 
\begin{equation}
\label{eq97}
C:=\frac{C_1+1}{2}.
\end{equation}
Let 
\begin{equation}
\label{eq98}
w:=u_\delta-V_\delta,\quad g(t):=\|w\|.
\end{equation}
One has
\begin{equation}
\label{eq99}
\dot{w}=-\dot{V}_\delta-A_{a(t)}^*\big{[}F(u_\delta)-F(V_\delta)+a(t)w\big{]}.
\end{equation}
We use Taylor's formula and get:
\begin{equation}
\label{eq100}
F(u_\delta)-F(V_\delta)+aw=A_a w+ K, \quad \|K\| \le\frac{M_2}{2}\|w\|^2,
\end{equation}
where $K:=F(u_\delta)-F(V_\delta)-Aw$, and $M_2$ is the constant from the estimate \eqref{eq44} and $A_a:=A+aI$.
Multiplying \eqref{eq99} by $w$ and using \eqref{eq100} one gets
\begin{equation}
\label{eq101}
g\dot{g}\le -a^2g^2+\frac{M_2(M_1+a)}{2}g^3+\|\dot{V}_\delta\|g,\quad
g:=g(t):=\|w(t)\|,
\end{equation}
where the estimates: $\langle A_a^* A_a w,w\rangle \ge a^2 
g^2$ and
$\|A_a\|\le M_1+a$ were used. Note that the inequality $\langle A_a^* A_a 
w,w\rangle \ge a^2 g^2$ is true if $A\ge 0$.
Since $F$ is monotone and differentiable (see \eqref{eq2}), 
one has $A:=F'(u_\delta)\ge 0$.

Let $t_0>0$ be such that 
\begin{equation}
\label{eq102}
\frac{\delta}{a(t_0)}= \frac{1}{C-1}\|y\|,\qquad C>1,
\end{equation} 
as in \eqref{eq311}.
It follows from Lemma~\ref{lemma20} that inequalities \eqref{eq312} and \eqref{eq313} hold. 

Since $g\ge 0$, inequalities \eqref{eq101} and \eqref{eq313} imply, 
for all $t\in[0,t_0]$, that
\begin{equation}
\label{eq103}
\dot{g}(t)\le 
-a^2(t)g(t)+c_0(M_1+a(t))g^2(t)+\frac{|\dot{a}(t)|}{a(t)}c_1,
\quad c_0=\frac{M_2}{2},\, c_1=\|y\|\bigg{(}1+\frac{1}{C-1}\bigg{)}.
\end{equation}

Inequality \eqref{eq103} is of the type \eqref{eq278} with
\begin{equation}
\label{eq104}
\gamma(t)=a^2(t),\quad \alpha(t)=c_0(M_1+a(t)),\quad \beta(t)=c_1\frac{|\dot{a}(t)|}{a(t)}.
\end{equation}
Let us check assumptions \eqref{eq299}--\eqref{eq301}. Take
\begin{equation}
\label{eq105}
\mu(t)=\frac{\lambda}{a^2(t)},\quad \lambda =\text{const}.
\end{equation}
By Lemma~\ref{lemma6} there exist $\lambda$ and $a(t)$ such that conditions \eqref{eq90}--\eqref{eq94}
hold. This implies that inequalities \eqref{eq299}--\eqref{eq301} hold. 
Thus, Corollary~\ref{corollary14} yields
\begin{equation}
\label{eq106}
g(t)<\frac{a^2(t)}{\lambda},\quad \forall t\le t_0.
\end{equation}
Note that inequality \eqref{eq106} holds for $t=0$ since \eqref{eq94} holds. 
Therefore,
\begin{equation}
\label{eq107}
\begin{split}
\|F(u_\delta(t))-f_\delta\|\le& \|F(u_\delta(t))-F(V_\delta(t))\|+\|F(V_\delta(t))-f_\delta\|\\
\le& M_1g(t)+\|F(V_\delta(t))-f_\delta\|\\
\le& \frac{M_1a^2(t)}{\lambda} + \|F(V_\delta(t))-f_\delta\|,\qquad \forall t\le t_0.
\end{split}
\end{equation}
It follows from Lemma~\ref{lemma17} that $\|F(V_\delta(t))-f_\delta\|$ is decreasing. 
Since $t_1\le t_0$, one gets 
\begin{equation}
\label{eq108}
\|F(V_\delta(t_0))-f_\delta\|\le \|F(V_\delta(t_1))-f_\delta\|= C\delta.
\end{equation}
This, inequality \eqref{eq107}, the inequality $\frac{M_1}{\lambda}\le \|y\|$ (see \eqref{eq91}), the relation \eqref{eq102},
 and the definition $C_1=2C-1$ (see \eqref{eq97}) imply
\begin{equation}
\label{eq109}
\begin{split}
\|F(u_\delta(t_0))-f_\delta\| 
\le& \frac{M_1a^2(t_0)}{\lambda} + C\delta\\
\le& \frac{M_1\delta (C-1)}{\lambda\|y\|} + C\delta\le (2C-1)\delta=C_1\delta.
\end{split}
\end{equation}
We have used the inequality 
\begin{equation}
\label{eq110}
a^2(t_0)\le a(t_0)=\frac{\delta (C-1)}{\|y\|}
\end{equation}
which
is true if $\delta$ is sufficiently small, or, equivalently, if $t_0$
is sufficiently large.
Thus, if 
\begin{equation}
\label{eq111}
\|F(u_\delta(0))-f_\delta\|\ge C_1\delta^\zeta,\quad 0<\zeta\le 1,
\end{equation}
then there exists $t_\delta \in (0,t_0)$ such that
\begin{equation}
\label{eq112}
\|F(u_\delta(t_\delta))-f_\delta\|=C_1\delta^\zeta
\end{equation}
for any given $\zeta\in (0,1]$, and any fixed $C_1>1$.

{\it Let us prove \eqref{eq96}. If this is done, then Theorem~\ref{theorem7} is 
proved.} 

First, we prove that $\lim_{\delta \to 0}\frac {\delta}{a(t_\delta)}=0.$

>From \eqref{eq107}  with $t=t_\delta$, \eqref{eq41.2} and \eqref{eq338}, one gets
\begin{equation}
\label{eq113}
\begin{split}
C_1\delta^\zeta &\le M_1 \frac{a^2(t_\delta)}{\lambda} + a(t_\delta)\|V_\delta(t_\delta)\|\\
&\le M_1 \frac{a^2(t_\delta)}{\lambda} + \|y\|a(t_\delta)+\delta.
\end{split}
\end{equation}
Thus, for sufficiently small $\delta$, one gets
\begin{equation}
\label{eq114}
\tilde{C}\delta^\zeta \le a(t_\delta) \bigg{(}\frac{M_1a(0)}{\lambda}+\|y\|\bigg{)},\quad \tilde{C}>0,
\end{equation}
where $\tilde{C}<C_1$ is a constant. 
Therefore, 
\begin{equation}
\label{eq115}
\lim_{\delta\to 0} \frac{\delta}{a(t_\delta)}\le
\lim_{\delta\to 0} \frac{\delta^{1-\zeta}}{\tilde{C}}\bigg{(}\frac{M_1a(0)}{\lambda}+\|y\|\bigg{)}
=0,\quad 0<\zeta<1.
\end{equation}

{\it Secondly, we prove that}
\begin{equation}
\label{eq116}
\lim_{\delta\to0}t_\delta = \infty.
\end{equation}
Using \eqref{eq89}, one obtains:
\begin{equation}
\label{eq117}
\frac{d}{dt}\big{(}F(u_\delta)+au_\delta - f_\delta\big{)}= A_a\dot{u}_\delta + \dot{a}u_\delta
= -A_a A_a^*\big{(}F(u_\delta)+au_\delta - f_\delta\big{)} + \dot{a}u_\delta.
\end{equation}
This and \eqref{eq41.2} imply:
\begin{equation}
\label{eq118}
\frac{d}{dt}\big{[}F(u_\delta)-F(V_\delta)+a(u_\delta-V_\delta)\big{]}
= - A_a A_a^*\big{[}F(u_\delta)-F(V_\delta) + a(u_\delta - V_\delta)\big{]} + \dot{a}u_\delta.
\end{equation}
Denote 
\begin{equation}
\label{eq119}
v:=F(u_\delta)-F(V_\delta)+a(u_\delta-V_\delta),\quad h:=h(t):=\|v(t)\|.
\end{equation}
Multiplying \eqref{eq118} by $v$ and using monotonicity of $F$, one obtains
\begin{equation}
\label{eq120}
\begin{split}
h\dot{h} &= -\langle A_a A_a^* v, v\rangle +\langle v,\dot{a}(u_\delta-V_\delta)\rangle + \dot{a}\langle v,V_\delta\rangle\\ 
&\le -h^2a^2 + h|\dot{a}|\|u_\delta-V_\delta\| + |\dot{a}|h\|V_\delta\|,\qquad h\ge 0.
\end{split}
\end{equation}
Again, we have used the inequality $A_aA_a^* \ge a^2$, which holds for 
$A\geq 0$, i.e., monotone operators $F$.  
Thus,
\begin{equation}
\label{eq121}
\dot{h}\le -ha^2 + |\dot{a}|\|u_\delta - V_\delta\| + |\dot{a}|\|V_\delta\|.
\end{equation}
>From inequality \eqref{eq330} we have 
\begin{equation}
\label{eq122}
a\|u_\delta-V_\delta\|\le h,\quad \|F(u_\delta)-F(V_\delta)\|\le h.
\end{equation} 
Inequalities \eqref{eq121} and \eqref{eq122} imply
\begin{equation}
\label{eq123}
\dot{h} \le -h\bigg{(}a^2-\frac{|\dot{a}|}{a}\bigg{)} +|\dot{a}|\|V_\delta\|. 
\end{equation}
Since $a^2-\frac{|\dot{a}|}{a}\ge \frac{3a^2}{4}>\frac{a^2}{2}$ 
by inequality \eqref{eq90}, 
it follows from inequality \eqref{eq123} that
\begin{equation}
\label{eq124}
\dot{h} \le -\frac{a^2}{2}h + |\dot{a}|\|V_\delta\|.
\end{equation}
Inequality \eqref{eq124} implies:
\begin{equation}
\label{eq125}
h(t)\le h(0)e^{-\int_0^t\frac{a^2(s)}{2}ds} + e^{-\int_0^t\frac{a^2(s)}{2}ds}
\int_0^t e^{\int_0^s\frac{a^2(\xi)}{2}d\xi}|\dot{a}(s)|\|V_\delta(s)\|ds.
\end{equation}
Denote 
$$\varphi(t):=\int_0^t\frac{a^2(s)}{2}ds.$$ 
>From \eqref{eq125} and 
\eqref{eq122}, one gets
\begin{equation}
\label{eq126}
\|F(u_\delta(t))-F(V_\delta(t))\| \le 
h(0)e^{-\varphi(t)} + e^{-\varphi(t)}\int_0^t 
e^{\varphi(s)}|\dot{a}(s)|\|V_\delta(s)\|ds.
\end{equation}
Therefore,
\begin{equation}
\label{eq127}
\begin{split}
\|F(u_\delta(t))-f_\delta\|&\ge \|F(V_\delta(t))-f_\delta\|-\|F(V_\delta(t))-F(u_\delta(t))\|\\
&\ge a(t)\|V_\delta(t)\| - h(0)e^{-\varphi(t)} - 
e^{-\varphi(t)}\int_0^t e^{\varphi(s)} |\dot{a}| \|V_\delta\|ds.
\end{split}
\end{equation}
>From Lemma~\ref{lemma26} it follows that there exists an 
$a(t)$ such that 
\begin{equation}
\label{eq128}
\frac{1}{2}a(t)\|V_\delta(t)\| \ge  e^{-\varphi(t)}\int_0^te^{\varphi(s)}|\dot{a}| \|V_\delta(s)\|ds.
\end{equation}
For example, one can choose 
\begin{equation}
\label{eq129}
a(t)=\frac{c_1}{(c+t)^b}, \quad b\in (0,\frac{1}{4}],\quad c_1^2 c^{1-2b}\ge 6b,
\end{equation}
where 
$c_1,c>0$.
Moreover, one can always choose $u_0$ such that 
\begin{equation}
\label{eq130}
h(0)=\|F(u_0) + a(0)u_0 -f_\delta\| \le \frac{1}{4} a(0)\|V_\delta(0)\|,
\end{equation}
because the equation 
\begin{equation}
\label{eq131}
F(u_0) + a(0)u_0 -f_\delta=0
\end{equation}
is solvable.

If \eqref{eq130} holds, then
\begin{equation}
\label{eq132}
h(0)e^{-\varphi(t)}\le \frac{1}{4}a(0)\|V_\delta(0)\|e^{-\varphi(t)},\qquad 
t\ge 0.
\end{equation}
If \eqref{eq129} holds, $c\ge1$  and $2b\le c_1^2$, then it follows that 
\begin{equation}
\label{eq133}
e^{-\varphi(t)}a(0)\le a(t).
\end{equation}
Indeed, inequality $a(0)\le a(t)e^{\varphi(t)}$ is obviously true for 
$t=0$, and $\big(a(t)e^{\varphi(t)}\big)'_t\geq 0$, provided that $c\geq 
1$ and $2b\le c_1^2$. 

Inequalities \eqref{eq132} and \eqref{eq133} imply
\begin{equation}
\label{eq134}
e^{-\varphi(t)}h(0) 
\le \frac{1}{4} a(t)\|V_\delta(0)\|
\le \frac{1}{4} a(t)\|V_\delta(t)\|,\quad t\ge 0.
\end{equation}
where we have used the inequality $\|V_\delta(t)\|\le \|V_\delta(t')\|$ for $t\le t'$, 
established in Lemma~\ref{lemma17}.
>From \eqref{eq112} and \eqref{eq116}--\eqref{eq134}, one gets
\begin{equation}
\label{eq135}
C\delta^\zeta = \|F(u_\delta(t_\delta))-f_\delta\|\ge \frac{1}{4}a(t_\delta)\|V_\delta(t_\delta)\|.
\end{equation}
Thus,
\begin{equation}
\label{eq136}
\lim_{\delta\to0}a(t_\delta)\|V_\delta(t_\delta)\|\le 
\lim_{\delta\to0}4C\delta^\zeta = 0.
\end{equation}
Since $\|V_\delta(t)\|$ is increasing, this implies 
$\lim_{\delta\to0}a(t_\delta)=0$. 
Since $0<a(t)\searrow 0$, it follows that  
\eqref{eq116} holds. 

>From the triangle inequality and inequalities \eqref{eq106} and \eqref{eq337} one obtains
\begin{equation}
\label{eq137}
\begin{split}
\|u_\delta(t_\delta) - y\| &\le \|u_{\delta}(t_\delta) - V_{\delta}\| + 
\|V(t_\delta) - V_\delta(t_\delta)\| + \|V(t_\delta) - y\|\\
&\le \frac{a^2(t_\delta)}{\lambda} + \frac{\delta}{a(t_\delta)} + \|V(t_\delta)-y\|,
\end{split}
\end{equation}
where $V(t):=V_\delta(t)|_{\delta=0}$ and $V_\delta(t)$ solves \eqref{eq41.2}.
>From \eqref{eq115}, \eqref{eq116}, inequality \eqref{eq137} and Lemma~\ref{lemma16}, one obtains
\eqref{eq96}. Theorem~\ref{theorem7} is proved.
\end{proof}

By the arguments, similar to the ones in the proof of 
Theorem~\ref{theorem10}--\ref{theorem12}
or in Remark~\ref{hehe}, one can show that
the trajectory $u_\delta(t)$ remains in the ball
$B(u_0,R):=\{u: \|u-u_0\|<R\}$ for all $t\leq t_\delta$, where $R$ does
not depend on $\delta$ as $\delta\to 0$.

%\subsection{A DSM for solving NOEs with monotone Lipchitz operators}

\subsubsection{Proof of Theorem~\ref{theorem9}}

\begin{proof}[Proof of Theorem~\ref{theorem9}]
 Denote 
\begin{equation}
\label{eq147}
C:=\frac{C_1+1}{2}.
\end{equation}
Let 
\begin{equation}
\label{eq148}
w:=u_\delta-V_\delta,\qquad g:=g(t):=\|w(t)\|.
\end{equation}
One has
\begin{equation}
\label{eq149}
\dot{w}=-\dot{V}_\delta-\big{[}F(u_\delta)-F(V_\delta)+a(t)w\big{]}.
\end{equation}
Multiplying \eqref{eq149} by $w$ and using \eqref{eq2} one gets
\begin{equation}
\label{eq150}
g\dot{g}\le -ag^2+\|\dot{V}_\delta\|g.
\end{equation}

Let $t_0>0$ be such that 
\begin{equation}
\label{eq151}
\frac{\delta}{a(t_0)}= \frac{1}{C-1}\|y\|,\qquad C>1.
\end{equation} 
This $t_0$ exists and is unique since $a(t)>0$ monotonically decays to 0 as $t\to\infty$.
It follows from inequality \eqref{eq151} and Lemma~\ref{lemma20} that inequalities \eqref{eq312} and \eqref{eq313} hold. 

Since $g\ge 0$, inequalities \eqref{eq150} and \eqref{eq313} imply
\begin{equation}
\label{eq152}
\dot{g}\le -a(t)g(t) + \frac{|\dot{a}(t)|}{a(t)}c_1,
\quad c_1=\|y\|\bigg{(}1+\frac{1}{C-1}\bigg{)}.
\end{equation}

Inequality \eqref{eq152} is of the type \eqref{eq278} with
\begin{equation}
\label{eq153}
\gamma(t)=a(t),\quad \alpha(t)=0,\quad \beta(t)=c_1\frac{|\dot{a}(t)|}{a(t)}.
\end{equation}
Let us check assumptions \eqref{eq299}--\eqref{eq301}. Take
\begin{equation}
\label{eq154}
\mu(t)=\frac{\lambda}{a(t)},\quad \lambda =\text{const}.
\end{equation}
By Lemma~\ref{lemma8} there exist $\lambda$ and $a(t)$ such that conditions \eqref{eq141}--\eqref{eq144}
hold. It follows that inequalities \eqref{eq299}--\eqref{eq301} hold. 
Thus, Corollary~\ref{corollary14} yields
\begin{equation}
\label{eq155}
g(t)<\frac{a(t)}{\lambda},\quad \forall t\le t_0.
\end{equation}
Therefore,
\begin{equation}
\label{eq156}
\begin{split}
\|F(u_\delta(t))-f_\delta\|\le& \|F(u_\delta(t))-F(V_\delta(t))\|+\|F(V_\delta(t))-f_\delta\|\\
\le& M_1g(t)+\|F(V_\delta(t))-f_\delta\|\\
\le& \frac{M_1a(t)}{\lambda} + \|F(V_\delta(t))-f_\delta\|,\qquad \forall t\le t_0.
\end{split}
\end{equation}
It follows from Lemma~\ref{lemma17} that $\|F(V_\delta(t))-f_\delta\|$ is decreasing. 
Since $t_1\le t_0$, one gets 
\begin{equation}
\label{eq157}
\|F(V_\delta(t_0))-f_\delta\|\le \|F(V_\delta(t_1))-f_\delta\|= C\delta.
\end{equation}
This, inequality \eqref{eq156}, the inequality $\frac{M_1}{\lambda}\le \|y\|$ (see \eqref{eq141}), the relation \eqref{eq151},
 and the definition $C_1=2C-1$ (see \eqref{eq147}) imply
\begin{equation}
\label{eq158}
\begin{split}
\|F(u_\delta(t_0))-f_\delta\| 
\le& \frac{M_1a(t_0)}{\lambda} + C\delta\\
\le& \frac{M_1\delta (C-1)}{\lambda\|y\|} + C\delta\le (2C-1)\delta=C_1\delta.
\end{split}
\end{equation}
Thus, if 
\begin{equation}
\label{eq159}
\|F(u_\delta(0))-f_\delta\|\ge C_1\delta^\zeta,\quad 0<\zeta\le 1,
\end{equation}
then there exists $t_\delta \in (0,t_0)$ such that
\begin{equation}
\label{eq160}
\|F(u_\delta(t_\delta))-f_\delta\|=C_1\delta^\zeta
\end{equation}
for any given $\zeta\in (0,1]$, and any fixed $C_1>1$.

{\it Let us prove \eqref{eq146}. If this is done, then Theorem~\ref{theorem9} is 
proved.} 

First, we prove that $\lim_{\delta \to 0}\frac {\delta}{a(t_\delta)}=0.$

>From \eqref{eq156}  with $t=t_\delta$, and from \eqref{eq338}, one gets
\begin{equation}
\label{eq161}
\begin{split}
C_1\delta^\zeta &\le M_1 \frac{a(t_\delta)}{\lambda} + a(t_\delta)\|V_\delta(t_\delta)\|\\
&\le M_1 \frac{a(t_\delta)}{\lambda} + \|y\|a(t_\delta)+\delta.
\end{split}
\end{equation}
Thus, for sufficiently small $\delta$, one gets
\begin{equation}
\label{eq162}
\tilde{C}\delta^\zeta \le a(t_\delta) \bigg{(}\frac{M_1}{\lambda}+\|y\|\bigg{)},\quad \tilde{C}>0,
\end{equation}
where $\tilde{C}<C_1$ is a constant. 
Therefore, 
\begin{equation}
\label{eq163}
\lim_{\delta\to 0} \frac{\delta}{a(t_\delta)}\le
\lim_{\delta\to 0} \frac{\delta^{1-\zeta}}{\tilde{C}}\bigg{(}\frac{M_1}{\lambda}+\|y\|\bigg{)}
=0,\quad 0<\zeta<1.
\end{equation}

{\it Secondly, we prove that}
\begin{equation}
\label{eq164}
\lim_{\delta\to0}t_\delta = \infty.
\end{equation}
Using \eqref{eq138}, one obtains:
\begin{equation}
\label{eq165}
\frac{d}{dt}\big{(}F(u_\delta)+au_\delta - f_\delta\big{)}= A_a\dot{u}_\delta + \dot{a}u_\delta
= -A_a \big{(}F(u_\delta)+au_\delta - f_\delta\big{)} + \dot{a}u_\delta,
\end{equation}
where $A_a:=F'(u_\delta)+a$. 
This and \eqref{eq41.2} imply:
\begin{equation}
\label{eq166}
\frac{d}{dt}\big{[}F(u_\delta)-F(V_\delta)+a(u_\delta-V_\delta)\big{]}
= - A_a \big{[}F(u_\delta)-F(V_\delta) + a(u_\delta - V_\delta)\big{]} + \dot{a}u_\delta.
\end{equation}
Denote 
\begin{equation}
\label{eq167}
v:=F(u_\delta)-F(V_\delta)+a(u_\delta-V_\delta),\quad h=\|v\|.
\end{equation}
Multiplying \eqref{eq166} by $v$ and using monotonicity of $F$, one obtains
\begin{equation}
\label{eq168}
\begin{split}
h\dot{h} &= -\langle A_a v, v\rangle +\langle v,\dot{a}(u_\delta-V_\delta)
\rangle + \dot{a}\langle v,V_\delta\rangle\\ 
&\le -h^2a + h|\dot{a}|\|u_\delta-V_\delta\| + |\dot{a}|h\|V_\delta\|,
\qquad h\ge 0.
\end{split}
\end{equation}
Again, we have used the inequality $\langle F'(u_\delta)v,v\rangle \ge 0$ 
which follows from the monotonicity of $F$.  
Thus,
\begin{equation}
\label{eq169}
\dot{h}\le -ha + |\dot{a}|\|u_\delta - V_\delta\| + |\dot{a}|\|V_\delta\|.
\end{equation}

Inequalities \eqref{eq169} and \eqref{eq176} imply
\begin{equation}
\label{eq170}
\dot{h} \le -h\bigg{(}a-\frac{|\dot{a}|}{a}\bigg{)} +|\dot{a}|\|V_\delta\|. 
\end{equation}
Since $a-\frac{|\dot{a}|}{a}\ge \frac{a}{2}$ 
by inequality \eqref{eq140}, 
it follows from inequality \eqref{eq170} that
\begin{equation}
\label{eq171}
\dot{h} \le -\frac{a}{2}h + |\dot{a}|\|V_\delta\|.
\end{equation}
Inequality \eqref{eq171} implies:
\begin{equation}
\label{eq172}
h(t)\le h(0)e^{-\int_0^t\frac{a(s)}{2}ds} + e^{-\int_0^t\frac{a(s)}{2}ds}
\int_0^t e^{\int_0^s\frac{a(\xi)}{2}d\xi}|\dot{a}(s)|\|V_\delta(s)\|ds.
\end{equation}
Denote 
\begin{equation}
\label{eq173}
\varphi(t):=\int_0^t\frac{a(s)}{2}ds.
\end{equation}
>From \eqref{eq172} and 
\eqref{eq176}, one gets
\begin{equation}
\label{eq174}
\|F(u_\delta(t))-F(V_\delta(t))\| \le 
h(0)e^{-\varphi(t)} + e^{-\varphi(t)}\int_0^t 
e^{\varphi(s)}|\dot{a}(s)|\|V_\delta(s)\|ds.
\end{equation}
Therefore,
\begin{equation}
\label{eq175}
\begin{split}
\|F(u_\delta(t))-f_\delta\|&\ge \|F(V_\delta(t))-f_\delta\|-\|F(V_\delta(t))-F(u_\delta(t))\|\\
&\ge a(t)\|V_\delta(t)\| - h(0)e^{-\varphi(t)} - 
e^{-\varphi(t)}\int_0^t e^{\varphi(s)} |\dot{a}| \|V_\delta\|ds.
\end{split}
\end{equation}
>From Lemma~\ref{lemma25} it follows that there exists an 
$a(t)$ such that 
\begin{equation}
\label{eq176}
\frac{1}{2}a(t)\|V_\delta(t)\| \ge  e^{-\varphi(t)}\int_0^te^{\varphi(s)}|\dot{a}| \|V_\delta(s)\|ds.
\end{equation}
For example, one can choose 
\begin{equation}
\label{eq177}
a(t)=\frac{d}{(c+t)^b}, \quad b\in (0,\frac{1}{2}],\quad d c^{1-b}\ge 6b,
\end{equation}
where 
$d,c>0$.
Moreover, one can always choose $u_0$ such that 
\begin{equation}
\label{eq178}
h(0)=\|F(u_0) + a(0)u_0 -f_\delta\| \le \frac{1}{4} a(0)\|V_\delta(0)\|,
\end{equation}
because the equation 
\begin{equation}
\label{eq179}
F(u_0) + a(0)u_0 -f_\delta=0
\end{equation}
is uniquely solvable for any $f_\delta \in H$ if $a(0)>0$ and $F$ is 
monotone.

If \eqref{eq178} holds, then
\begin{equation}
\label{eq180}
h(0)e^{-\varphi(t)}\le \frac{1}{4}a(0)\|V_\delta(0)\|e^{-\varphi(t)},\qquad 
t\ge 0.
\end{equation}
If \eqref{eq177} holds, $c\ge1$  and $2b\le d$, then it follows that 
\begin{equation}
\label{eq181}
e^{-\varphi(t)}a(0)\le a(t).
\end{equation}
Indeed, inequality $a(0)\le a(t)e^{\varphi(t)}$ is obviously true for 
$t=0$, and $\big(a(t)e^{\varphi(t)}\big)'_t\geq 0$, provided that $c\geq 
1$ and $2b\le d$. 

Inequalities \eqref{eq180} and \eqref{eq181} imply
\begin{equation}
\label{eq182}
e^{-\varphi(t)}h(0) 
\le \frac{1}{4} a(t)\|V_\delta(0)\|
\le \frac{1}{4} a(t)\|V_\delta(t)\|,\quad t\ge 0,
\end{equation}
where we have used the inequality $\|V_\delta(t)\|\le \|V_\delta(t')\|$ for $t\le t'$, 
established in Lemma~\ref{lemma17}.
>From \eqref{eq160} and \eqref{eq164}--\eqref{eq182}, one gets
\begin{equation}
\label{eq183}
C\delta^\zeta = \|F(u_\delta(t_\delta))-f_\delta\|\ge \frac{1}{4}a(t_\delta)\|V_\delta(t_\delta)\|.
\end{equation}
Thus,
\begin{equation}
\label{eq184}
\lim_{\delta\to0}a(t_\delta)\|V_\delta(t_\delta)\|\le 
\lim_{\delta\to0}4C\delta^\zeta = 0.
\end{equation}
Since $\|V_\delta(t)\|$ is increasing, this implies 
$\lim_{\delta\to0}a(t_\delta)=0$. 
Since $0<a(t)\searrow 0$, it follows that  
\eqref{eq164} holds. 

>From the triangle inequality and inequalities \eqref{eq155} and 
\eqref{eq337} one obtains:
\begin{equation}
\label{eq185}
\begin{split}
\|u_\delta(t_\delta) - y\| &\le \|u_{\delta}(t_\delta) - V_{\delta}\| + 
\|V(t_\delta) - V_\delta(t_\delta)\| + \|V(t_\delta) - y\|\\
&\le \frac{a(t_\delta)}{\lambda} + \frac{\delta}{a(t_\delta)} + \|V(t_\delta)-y\|,
\end{split}
\end{equation}
where $V(t):=V_\delta(t)|_{\delta=0}$ and $V_\delta(t)$ solves \eqref{eq41.2}. 
>From \eqref{eq163}, \eqref{eq164}, inequality \eqref{eq185} and Lemma~\ref{lemma16}, one obtains
\eqref{eq146}. Theorem~\ref{theorem9} is proved.
\end{proof}

By the arguments, similar to the ones in the proof of 
Theorem~\ref{theorem10}--\ref{theorem12}
or in Remark~\ref{hehe}, one can show that:
the trajectory $u_\delta(t)$ remains in the ball
$B(u_0,R):=\{u: \|u-u_0\|<R\}$ for all $t\leq t_\delta$, where $R$ does
not depend on $\delta$ as $\delta\to 0$.

\subsection{Proofs of convergence of the iterative schemes}

\subsubsection{Proof of Theorem~\ref{theorem10}}

\begin{proof}[Proof of Theorem~\ref{theorem10}]
Denote 
\begin{equation}
\label{eq192}
C:=\frac{C_1+1}{2}.
\end{equation}
Let 
\begin{equation}
\label{eq193}
z_n:=u_n-V_n,\quad g_n:=\|z_n\|.
\end{equation}
We use Taylor's formula and get:
\begin{equation}
\label{eq194}
F(u_n)-F(V_n)+a_nz_n=A_{a_n} z_n+ K_n, \quad \|K_n\| \le\frac{M_2}{2}\|z_n\|^2,
\end{equation}
where $K_n:=F(u_n)-F(V_n)-F'(u_n)z_n$ and $M_2$ is the constant from \eqref{eq44}.
>From \eqref{eq187} and \eqref{eq194} one obtains
\begin{equation}
\label{eq195}
z_{n+1} = z_n - z_n - A_n^{-1}K(z_n) - (V_{n+1}-V_{n}).
\end{equation}
>From \eqref{eq195}, \eqref{eq194}, and the estimate $\|A_n^{-1}\|\le\frac{1}{a_n}$, one gets
\begin{equation}
\label{eq196}
g_{n+1} \le \frac{M_2g_n^2}{2a_n} + \|V_{n+1}-V_n\|.
\end{equation}
%Since $0<a_n\searrow 0$, for any fixed $\delta>0$ 
There exists a unique $n_0$ such that
\begin{equation}
\label{eq197}
\frac{\delta}{a_{n_0+1}}> \frac{1}{C-1}\|y\|\ge \frac{\delta}{a_{n_0}},\qquad C>1.
\end{equation} 
It follows from \eqref{eq197} and Lemma~\ref{lemma22} that inequalities \eqref{eq319} and \eqref{eq321} hold.

Inequalities \eqref{eq196} and \eqref{eq321} imply
\begin{equation}
\label{eq198}
g_{n+1}\le \frac{c_0}{a_n}g_n^2+\frac{a_n-a_{n+1}}{a_{n+1}}c_1,
\quad c_0=\frac{M_2}{2},\quad c_1=\|y\|\bigg{(}1+\frac{2}{C-1}\bigg{)},
\end{equation}
for all $n\le n_0+1$. 

%By Lemma~\ref{lemma27}, the sequence 
%$(a_n)_{n=1}^\infty$, satisfies conditions \eqref{eq343}--\eqref{eq347}, 
%provided that $d_0$ is sufficiently large and 
%$\lambda>0$ is 
%chosen so that \eqref{eq344} holds.
Let us show by induction that 
\begin{equation}
\label{eq199}
g_n<\frac{a_n}{\lambda},\qquad 0\le n\le n_0+1.
\end{equation}
Inequality \eqref{eq199} holds for $n=0$ by Remark~\ref{remark8} (see \eqref{eq360}). Suppose \eqref{eq199} holds for some $n\ge 0$. 
>From \eqref{eq198}, \eqref{eq199} and \eqref{eq346}, one gets
\begin{equation}
\label{eq200}
\begin{split}
g_{n+1}&\le \frac{c_0}{a_n}\bigg{(}\frac{a_n}{\lambda}\bigg{)}^2 + \frac{a_n-a_{n+1}}{a_{n+1}}c_1\\
&= \frac{c_0 a_n}{\lambda^2} + \frac{a_n-a_{n+1}}{a_{n+1}}c_1\\
&\le \frac{a_{n+1}}{\lambda}.
\end{split}
\end{equation}
Thus, by induction, inequality \eqref{eq199} holds for all $n$ in the region $0\le n\le n_0+1$.

>From inequality \eqref{eq338} one has $\|V_n\| \le \|y\|+\frac{\delta}{a_n}$. 
This and the triangle inequality imply 
\begin{equation}
\label{eq201}
\|u_0-u_n\| \le \|u_0\|+ \|z_n\|+ \|V_n\|\le \|u_0\|+\|z_n\|+ \|y\|+\frac{\delta}{a_n}.
\end{equation}
Inequalities \eqref{eq320}, \eqref{eq199},
and \eqref{eq201} guarantee that the sequence $u_n$, generated by the 
iterative process \eqref{eq187}, remains
in the ball $B(u_0,R)$ for all $n\le n_0+1$, where 
$R\le \frac{a_0}{\lambda}+\|u_0\|+\|y\|+ \frac{\delta}{a_n}$.
This inequality and the estimate \eqref{eq199} imply that the sequence 
$u_n$, $n\le 
n_0+1,$ stays in the ball $B(u_0,R)$,
where 
\begin{equation}
\label{eq202}
R\le \frac{a_0}{\lambda}+ \|u_0\|+\|y\|+ \|y\|\frac{C+1}{C-1}.
\end{equation}
By Remark~\ref{remark7}, one can choose $a_0$ and $\lambda$ so that 
$\frac{a_0}{\lambda}$
is uniformly bounded as $\delta \to 0$ even if $M_1(R)\to\infty$ as 
$R\to\infty$ at an arbitrary fast rate.
Thus, the sequence $u_n$ stays in the ball $B(u_0,R)$ for $n\leq n_0+1$
when $\delta\to 0$. An upper bound on
$R$ is given above. It does not depend on $\delta$ as 
$\delta\to 0$.

One has:
\begin{equation}
\label{eq203}
\begin{split}
\|F(u_n)-f_\delta\|\le& \|F(u_n)-F(V_n)\|+\|F(V_n)-f_\delta\|\\
\le& M_1g_n+\|F(V_n)-f_\delta\|\\
\le& \frac{M_1a_n}{\lambda} + \|F(V_n)-f_\delta\|,\qquad \forall n\le n_0+1,
\end{split}
\end{equation}
where \eqref{eq199} was used and $M_1$ is the constant from \eqref{eq44}. 
By Lemma~\ref{lemma22} one gets 
\begin{equation}
\label{eq204}
\|F(V_{n_0+1})-f_\delta\|\le C\delta.
\end{equation}
>From \eqref{eq345}, \eqref{eq203}, \eqref{eq204}, 
the relation \eqref{eq197}, 
and the definition $C_1=2C-1$ (see \eqref{eq192}), one concludes that
\begin{equation}
\label{eq205}
\begin{split}
\|F(u_{n_0+1})-f_\delta\| 
\le& \frac{M_1a_{n_0+1}}{\lambda} + C\delta \\
\le& \frac{M_1\delta (C-1)}{\lambda\|y\|} + C\delta\le (2C-1)\delta=C_1\delta.
\end{split}
\end{equation}
{\it Thus, if 
\begin{equation}
\label{eq206}
\|F(u_0)-f_\delta\|> C_1\delta^\gamma,\quad 0<\gamma\le 1,
\end{equation}
then one concludes from \eqref{eq205} that there exists 
$n_\delta$, $0<n_\delta \le 
n_0+1,$ such that
\begin{equation}
\label{eq207}
\|F(u_{n_\delta})-f_\delta\| \le C_1\delta^\gamma < \|F(u_{n})-f_\delta\|,\quad 0\le n< n_\delta,
\end{equation}
for any given $\gamma\in (0,1]$, and any fixed $C_1>1$.}

Let us prove \eqref{eq189}. If $n>0$ is fixed, then $u_{\delta,n}$ is a
continuous function of $f_\delta$. Denote
\begin{equation}
\label{eq208}
\tilde{u}_N=\lim_{\delta\to 0}u_{\delta,N},
\end{equation}
where $N<\infty$ is a cluster point of $n_{\delta_m}$, so that there exists a subsequence of $n_{\delta_m}$,
which we denote by $n_{m}$, such that
\begin{equation}
\label{eq209}
\lim_{m\to\infty}n_{m} = N.
\end{equation}
>From \eqref{eq208} and the continuity of $F$, one obtains:
\begin{equation}
\label{eq210}
\|F(\tilde{u}_N)-f\| = \lim_{m\to\infty}\|F(u_{n_{\delta_m}})-f_{\delta_m}\|\le \lim_{m\to \infty}C_1\delta_m^\gamma = 0.
\end{equation}
Thus, $\tilde{u}_N$ is a solution to the equation $F(u)=f$, and \eqref{eq189} is proved.

{\it Let us prove \eqref{eq191} assuming that \eqref{eq190} holds.} From \eqref{eq188} and \eqref{eq203}  with $n=n_\delta-1$, and from \eqref{eq207}, one gets
\begin{equation}
\label{eq211}
\begin{split}
C_1\delta^\gamma &\le M_1 \frac{a_{n_\delta-1}}{\lambda} + a_{n_\delta-1}\|V_{n_\delta-1}\|
\le M_1 \frac{a_{n_\delta-1}}{\lambda} + \|y\|a_{n_\delta-1}+\delta.
\end{split}
\end{equation}
If $0<\delta<1$ and $\delta$ is sufficiently small, then
\begin{equation}
\label{eq212}
\tilde{C}\delta^\gamma \le a_{n_\delta-1} \bigg{(}\frac{M_1}{\lambda}+\|y\|\bigg{)},\quad \tilde{C}>0,
\end{equation}
where $\tilde{C}$ is a constant. 
Therefore, by \eqref{eq212}, 
\begin{equation}
\label{eq213}
\lim_{\delta\to 0} \frac{\delta}{2a_{n_\delta}}\le
\lim_{\delta\to 0} \frac{\delta}{a_{n_\delta-1}}\le
\lim_{\delta\to 0} \frac{\delta^{1-\gamma}}{\tilde{C}}\bigg{(}\frac{M_1}{\lambda}+\|y\|\bigg{)}
=0,\quad 0<\gamma<1.
\end{equation}
In particular, for $\delta=\delta_m$, one gets
\begin{equation}
\label{eq214}
\lim_{\delta_m\to 0} \frac{\delta_m}{a_{n_{\delta_m}}} = 0.
\end{equation}

>From the triangle inequality, inequalities \eqref{eq337} 
and \eqref{eq199}, one obtains
\begin{equation}
\label{eq215}
\begin{split}
\|u_{n_{\delta_m}} - y\| &\le \|u_{n_{\delta_m}} - V_{n_{\delta_m}}\| + 
\|V_{n_{\delta_m}} - V_{{n_{\delta_m}},0}\| + \|V_{{n_{\delta_m}},0}-y\|\\
&\le \frac{a_{n_{\delta_m}}}{\lambda} + \frac{\delta_m}{a_{n_{\delta_m}}} + \|V_{{n_{\delta_m}},0}-y\|.
\end{split}
\end{equation}
Recall that $V_{n,0}=\tilde{V}_{a_n}$ (cf. \eqref{eq186} and \eqref{eq41.2}). 
>From \eqref{eq190}, \eqref{eq214}, inequality \eqref{eq215} and Lemma~\ref{lemma16}, one obtains
\eqref{eq191}. Theorem~\ref{theorem10} is proved.
\end{proof}

\subsubsection{Proof of Theorem~\ref{theorem11}}

\begin{proof}[Proof of Theorem~\ref{theorem11}]
Denote 
\begin{equation}
\label{eq223}
C:=\frac{C_1+1}{2}.
\end{equation}
Let 
\begin{equation}
\label{eq224}
z_n:=u_n-V_n,\quad g_n:=\|z_n\|.
\end{equation}
We use Taylor's formula and get:
\begin{equation}
\label{eq225}
F(u_n)-F(V_n)+a_nz_n=A_{n} z_n+ K_n, \quad \|K_n\| \le\frac{M_2}{2}\|z_n\|^2,
\end{equation}
where $K_n:=F(u_n)-F(V_n)-F'(u_n)z_n$ and $M_2$ is the constant 
from \eqref{eq44}.
>From \eqref{eq216} and \eqref{eq225} one obtains
\begin{equation}
\label{eq226}
z_{n+1} = z_n - \alpha_n A_n^*A_nz_n - \alpha_n A_n^*K(z_n) - (V_{n+1}-V_{n}).
\end{equation}
>From \eqref{eq226}, \eqref{eq225}, \eqref{eq218}, and the 
estimate $\|A_n\|\le M_1+a_n$, one gets
\begin{equation}
\label{eq227}
\begin{split}
g_{n+1}   &\le g_n\|1 - \alpha_n A_n^*A_n\|+ \frac{\alpha_n M_2(M_1+a_n)}{2}g_n^2 + \|V_{n+1}-V_n\|\\
          &\le g_n(1-\alpha_n a_n^2)+\frac{\alpha_n M_2(M_1+a_n)}{2}g_n^2 
+ \|V_{n+1}-V_n\|.
\end{split}
\end{equation}
Since $0<a_n\searrow 0$, for any fixed $\delta>0$ there exists $n_0$ such that
\begin{equation}
\label{eq228}
\frac{\delta}{a_{n_0+1}}> \frac{1}{C-1}\|y\|\ge \frac{\delta}{a_{n_0}},\qquad C>1.
\end{equation} 
This and Lemma~\ref{lemma22} imply that inequalities \eqref{eq319}--\eqref{eq321} hold. 

Inequalities \eqref{eq227} and \eqref{eq321} imply
\begin{equation}
\label{eq229}
g_{n+1}\le (1-\alpha_n a_n^2)g_n + \alpha_n c_0(M_1+a_n)g_n^2+\frac{a_n-a_{n+1}}{a_{n+1}}c_1,\qquad \forall \, n\le n_0+1,
\end{equation}
where the constants $c_0$ and $c_1$ are defined in \eqref{eq103}.

%By Lemma~\ref{lemma28} the sequence 
%$(a_n)_{n=1}^\infty$, satisfies conditions \eqref{eq348}--\eqref{eq352}, 
%provided that $a_0$ is sufficiently large and 
%$\lambda>0$ is 
%chosen so that \eqref{eq350} holds.

Let us show by induction that 
\begin{equation}
\label{eq230}
g_n<\frac{a_n^2}{\lambda},\qquad 0\le n\le n_0+1.
\end{equation}
Inequality \eqref{eq230} holds for $n=0$ by Remark~\ref{remark8} (see \eqref{eq361}). Suppose \eqref{eq230} holds for some $n\ge 0$. 
>From \eqref{eq229}, \eqref{eq230} and \eqref{eq352}, one gets
\begin{equation}
\label{eq231}
\begin{split}
g_{n+1}&\le (1-\alpha_n a_n^2)\frac{a_n^2}{\lambda}+ 
\alpha_n c_0(M_1+a_n)\bigg{(}\frac{a_n^2}{\lambda}\bigg{)}^2 + \frac{a_n-a_{n+1}}{a_{n+1}}c_1\\
&=\frac{a_n^4}{\lambda}\bigg{(}\frac{\alpha_n c_0(M_1+a_n)}{\lambda}-\alpha_n\bigg{)}+ 
\frac{a_n^2}{\lambda} + \frac{a_n-a_{n+1}}{a_{n+1}}c_1\\
&\le -\frac{\alpha_n a_n^4}{2\lambda}+ 
\frac{a_n^2}{\lambda} + \frac{a_n-a_{n+1}}{a_{n+1}}c_1\\
&\le \frac{a_{n+1}^2}{\lambda}.
\end{split}
\end{equation}
Thus, by induction, inequality \eqref{eq230} holds for all $n$ in the region $0\le n\le n_0+1$.

>From \eqref{eq338} one has $\|V_n\| \le \|y\|+\frac{\delta}{a_n}$. 
This and the triangle inequality imply 
\begin{equation}
\label{eq232}
\|u_0-u_n\| \le \|u_0\|+ \|z_n\|+ \|V_n\|\le \|u_0\|+\|z_n\|+ \|y\|+\frac{\delta}{a_n}.
\end{equation}
Inequalities \eqref{eq320}, \eqref{eq230},
and \eqref{eq232} guarantee that the sequence $u_n$, generated by the 
iterative process \eqref{eq216}, remains
in the ball $B(u_0,R)$ for all $n\le n_0+1$, where 
$R\le \frac{a_0}{\lambda}+\|u_0\|+\|y\|+ \frac{\delta}{a_n}$.
This inequality and the estimate \eqref{eq230} imply that the sequence 
$u_n$, $n\le 
n_0+1,$ stays in the ball $B(u_0,R)$,
where 
\begin{equation}
\label{eq233}
R\le \frac{a_0}{\lambda}+ \|u_0\|+\|y\|+ \|y\|\frac{C+1}{C-1}.
\end{equation}
By Remark~\ref{remark7}, one can choose $a_0$ and $\lambda$ so that 
$\frac{a_0}{\lambda}$
is uniformly bounded as $\delta \to 0$ even if $M_1(R)\to\infty$ as 
$R\to\infty$ at an arbitrary fast rate.
Thus, the sequence $u_n$ stays in the ball $B(u_0,R)$ for $n\leq n_0+1$
when $\delta\to 0$. An upper bound on
$R$ is given above. It does not depend on $\delta$ as 
$\delta\to 0$.

One has:
\begin{equation}
\label{eq234}
\begin{split}
\|F(u_n)-f_\delta\|\le& \|F(u_n)-F(V_n)\|+\|F(V_n)-f_\delta\|\\
\le& M_1g_n+\|F(V_n)-f_\delta\|\\
\le& \frac{M_1a_n^2}{\lambda} + \|F(V_n)-f_\delta\|,\qquad \forall n\le n_0+1,
\end{split}
\end{equation}
where \eqref{eq230} was used and $M_1$ is the constant from \eqref{eq44}. 
By Lemma~\ref{lemma22} one gets 
\begin{equation}
\label{eq235}
\|F(V_{n_0+1})-f_\delta\| \le C\delta.
\end{equation}
>From \eqref{eq350}, \eqref{eq234}, \eqref{eq235}, 
the relation \eqref{eq228}, 
and the definition $C_1=2C-1$ (see \eqref{eq223}), one concludes that
\begin{equation}
\label{eq236}
\begin{split}
\|F(u_{n_0+1})-f_\delta\| 
\le& \frac{M_1a_{n_0+1}^2}{\lambda} + C\delta \\
%\le& \frac{2M_1a_{n_0+1}}{\lambda} + C\delta\\
\le& \frac{M_1\delta (C-1)}{\lambda\|y\|} + C\delta\le (2C-1)\delta=C_1\delta.
\end{split}
\end{equation}
{\it Thus, if 
\begin{equation}
\label{eq237}
\|F(u_0)-f_\delta\|> C_1\delta^\zeta,\quad 0<\zeta\le 1,
\end{equation}
then one concludes from \eqref{eq236} that there exists 
$n_\delta$, $0<n_\delta \le 
n_0+1,$ such that
\begin{equation}
\label{eq238}
\|F(u_{n_\delta})-f_\delta\| \le C_1\delta^\zeta < \|F(u_{n})-f_\delta\|,\quad 0\le n< n_\delta,
\end{equation}
for any given $\zeta\in (0,1]$, and any fixed $C_1>1$.}

{\it Let us prove \eqref{eq220}.}

 If $n>0$ is fixed, then 
$u_{\delta,n}$ is a
continuous function of $f_\delta$. Denote
\begin{equation}
\label{eq239}
\tilde{u}:=\tilde{u}_N=\lim_{\delta\to 0}u_{\delta,n_{m_j}},
\end{equation}
where 
\begin{equation}
\label{eq240}
\lim_{j\to\infty}n_{m_j} = N.
\end{equation}
>From \eqref{eq239} and the continuity of $F$, one obtains:
\begin{equation}
\label{eq241}
\|F(\tilde{u})-f_\delta\| = \lim_{j\to\infty}\|F(u_{n_{m_j}})-f_\delta\|\le \lim_{\delta\to 0}C_1\delta^\zeta = 0.
\end{equation}
Thus, $\tilde{u}$ is a solution to the equation $F(u)=f$, and \eqref{eq220} is proved.

{\it Let us prove \eqref{eq222} assuming that \eqref{eq221} holds.} 

>From \eqref{eq219} and \eqref{eq234}  with $n=n_\delta-1$, 
and from \eqref{eq238}, one gets
\begin{equation}
\label{eq242}
\begin{split}
C_1\delta^\zeta &\le M_1 \frac{a_{n_\delta-1}^2}{\lambda} + 
a_{n_\delta-1}\|V_{n_\delta-1}\|
\le M_1 \frac{a_{n_\delta-1}^2}{\lambda} + \|y\|a_{n_\delta-1}+\delta.
\end{split}
\end{equation}
If $\delta>0$ is sufficiently small, then the above equation implies
\begin{equation}
\label{eq243}
\tilde{C}\delta^\zeta \le a_{n_\delta-1} \bigg{(}\frac{M_1a_0}{\lambda}+\|y\|\bigg{)},\quad \tilde{C}>0,
\end{equation}
where $\tilde{C}<C_1$ is a constant, and the inequality $ 
a^2_{n_\delta-1}\le a_{n_\delta-1}a_0$ was used. 
Therefore, by \eqref{eq348}, 
\begin{equation}
\label{eq244}
\lim_{\delta\to 0} \frac{\delta}{2a_{n_\delta}}\le
\lim_{\delta\to 0} \frac{\delta}{a_{n_\delta-1}}\le
\lim_{\delta\to 0} \frac{\delta^{1-\zeta}}{\tilde{C}}\bigg{(}\frac{M_1a_0}{\lambda}+\|y\|\bigg{)}
=0,\quad 0<\zeta<1.
\end{equation}
In particular, for $\delta=\delta_m$, one gets
\begin{equation}
\label{eq245}
\lim_{\delta_m\to 0} \frac{\delta_m}{a_{n_m}} = 0.
\end{equation}
>From the triangle inequality and inequalities \eqref{eq337} and \eqref{eq230} one obtains
\begin{equation}
\label{eq246}
\begin{split}
\|u_{n_m} - y\| &\le \|u_{n_m} - V_{n_m}\| + 
\|V_n - V_{{n_m},0}\| + \|V_{{n_m},0}-y\|\\
&\le \frac{a^2_{n_m}}{\lambda} + \frac{\delta_m}{a_{n_m}} + \|V_{{n_m},0}-y\|.
\end{split}
\end{equation}
Recall that $V_{n,0}=\tilde{V}_{a_n}$ (cf. \eqref{eq186} and \eqref{eq41.2}). 
>From \eqref{eq221}, \eqref{eq245}, inequality \eqref{eq246} and Lemma~\ref{lemma16}, one obtains
\eqref{eq222}. Theorem~\ref{theorem11} is proved.
\end{proof}

\subsubsection{Proof of Theorem~\ref{theorem12}}

\begin{proof}
Denote 
\begin{equation}
\label{eq254}
C:=\frac{C_1+1}{2}.
\end{equation}
Let 
\begin{equation}
\label{eq255}
z_n:=u_n-V_n,\quad g_n:=\|z_n\|.
\end{equation}
One has
\begin{equation}
\label{eq256}
F(u_n) - F(V_n)=J_nz _n,\qquad J_n=\int_0^1 F'(u_0+\xi z_n)d\xi.
\end{equation}
Since $F'(u)\ge 0,\,\forall u\in H$ and $\|F'(u)\|\le M_1,\forall u\in B(u_0,R)$,  it follows that $J_n\ge 0$
and $\|J_n\|\le M_1$. 
>From \eqref{eq247} and \eqref{eq256} one obtains
\begin{equation}
\label{eq257}
\begin{split}
z_{n+1} &= z_n - \alpha_n [F(u_n) - F(V_n) + a_n z_n] - (V_{n+1}-V_{n})\\
& = (1-\alpha_n (J_n+a_n))z_n - (V_{n+1}-V_{n}).
\end{split}
\end{equation}
>From \eqref{eq257} and \eqref{eq249}, one gets
\begin{equation}
\label{eq258}
\begin{split}
g_{n+1}   &\le g_n \|1-\alpha_n(J_n + a_n)\| + \|V_{n+1}-V_n\|\\
&\le g_n(1-\alpha_n a_n) + \|V_{n+1}-V_n\|.
\end{split}
\end{equation}
Since $0<a_n\searrow 0$, for any fixed $\delta>0$ there exists $n_0$ such that
\begin{equation}
\label{eq259}
\frac{\delta}{a_{n_0+1}}> \frac{1}{C-1}\|y\|\ge \frac{\delta}{a_{n_0}},\qquad C>1.
\end{equation} 
This and Lemma~\ref{lemma22} imply that inequalities \eqref{eq319}--\eqref{eq321} hold. 

Inequalities \eqref{eq258} and \eqref{eq321} imply
\begin{equation}
\label{eq260}
g_{n+1}\le (1-\alpha_n a_n)g_n +\frac{a_n-a_{n+1}}{a_{n+1}}c_1,\qquad \forall \, n\le n_0+1,
%\quad c_0=\frac{M_2}{2},\quad c_1=\|y\|\bigg{(}1+\frac{2}{C-1}\bigg{)},
\end{equation}
where the constant $c_1$ is defined in \eqref{eq152}.

%By Lemma~\ref{lemma29} the sequence 
%$(a_n)_{n=1}^\infty$, satisfies conditions \eqref{eq353}--\eqref{eq356}, 
%provided that $a_0$ is sufficiently large and 
%$\lambda>0$ is 
%chosen so that \eqref{eq355} holds.

Let us show by induction that 
\begin{equation}
\label{eq261}
g_n<\frac{a_n}{\lambda},\qquad 0\le n\le n_0+1.
\end{equation}
Inequality \eqref{eq261} holds for $n=0$ by Remark~\ref{remark8} (see \eqref{eq360}). Suppose \eqref{eq261} holds for some $n\ge 0$. 
>From \eqref{eq260}, \eqref{eq261} and \eqref{eq356}, one gets
\begin{equation}
\label{eq262}
\begin{split}
g_{n+1}&\le (1-\alpha_n a_n)\frac{a_n}{\lambda}+ 
 \frac{a_n-a_{n+1}}{a_{n+1}}c_1\\
&= -\frac{\alpha_n a_n^2}{\lambda}+ 
\frac{a_n}{\lambda} + \frac{a_n-a_{n+1}}{a_{n+1}}c_1\\
&\le \frac{a_{n+1}}{\lambda}.
\end{split}
\end{equation}
Thus, by induction, inequality \eqref{eq261} holds for all $n$ in the region $0\le n\le n_0+1$.

>From \eqref{eq338} one has $\|V_n\| \le \|y\|+\frac{\delta}{a_n}$. 
This and the triangle inequality imply 
\begin{equation}
\label{eq263}
\|u_0-u_n\| \le \|u_0\|+ \|z_n\|+ \|V_n\|\le \|u_0\|+\|z_n\|+ \|y\|+\frac{\delta}{a_n}.
\end{equation}
Inequalities \eqref{eq320}, \eqref{eq261},
and \eqref{eq263} guarantee that the sequence $u_n$, generated by the 
iterative process \eqref{eq247}, remains
in the ball $B(u_0,R)$ for all $n\le n_0+1$, where 
$R\le \frac{a_0}{\lambda}+\|u_0\|+\|y\|+ \frac{\delta}{a_n}$.
This inequality and the estimate \eqref{eq261} imply that the sequence 
$u_n$, $n\le 
n_0+1,$ stays in the ball $B(u_0,R)$,
where 
\begin{equation}
\label{eq264}
R\le \frac{a_0}{\lambda}+ \|u_0\|+\|y\|+ \|y\|\frac{C+1}{C-1}.
\end{equation}
By Remark~\ref{remark7}, one can choose $a_0$ and $\lambda$ so that 
$\frac{a_0}{\lambda}$
is uniformly bounded as $\delta \to 0$ even if $M_1(R)\to\infty$ as 
$R\to\infty$ at an arbitrary fast rate.
Thus, the sequence $u_n$ stays in the ball $B(u_0,R)$ for $n\leq n_0+1$
when $\delta\to 0$. An upper bound on
$R$ is given above. It does not depend on $\delta$ as 
$\delta\to 0$.

One has:
\begin{equation}
\label{eq265}
\begin{split}
\|F(u_n)-f_\delta\|\le& \|F(u_n)-F(V_n)\|+\|F(V_n)-f_\delta\|\\
\le& M_1g_n+\|F(V_n)-f_\delta\|\\
\le& \frac{M_1a_n}{\lambda} + \|F(V_n)-f_\delta\|,\qquad \forall n\le n_0+1,
\end{split}
\end{equation}
where \eqref{eq261} was used and $M_1$ is the constant from \eqref{eq44}. 
Since $\|F(V_n)-f_\delta\|$ is decreasing, by Lemma~\ref{lemma17}, and
$n_1\le n_0$, one gets 
\begin{equation}
\label{eq266}
\|F(V_{n_0+1})-f_\delta\|\le \|F(V_{n_1+1})-f_\delta\| \le C\delta.
\end{equation}
>From \eqref{eq355}, \eqref{eq265}, \eqref{eq266}, 
the relation \eqref{eq259}, 
and the definition $C_1=2C-1$ (see \eqref{eq254}), one concludes that
\begin{equation}
\label{eq267}
\begin{split}
\|F(u_{n_0+1})-f_\delta\| 
\le& \frac{M_1a_{n_0+1}}{\lambda} + C\delta \\
\le& \frac{M_1\delta (C-1)}{\lambda\|y\|} + C\delta\le (2C-1)\delta=C_1\delta.
\end{split}
\end{equation}
{\it Thus, if 
\begin{equation}
\label{eq268}
\|F(u_0)-f_\delta\|> C_1\delta^\zeta,\quad 0<\zeta\le 1,
\end{equation}
then one concludes from \eqref{eq267} that there exists 
$n_\delta$, $0<n_\delta \le 
n_0+1,$ such that
\begin{equation}
\label{eq269}
\|F(u_{n_\delta})-f_\delta\| \le C_1\delta^\zeta < \|F(u_{n})-f_\delta\|,\quad 0\le n< n_\delta,
\end{equation}
for any given $\zeta\in (0,1]$, and any fixed $C_1>1$.}

{\it Let us prove \eqref{eq251}.}

 If $n>0$ is fixed, then 
$u_{\delta,n}$ is a
continuous function of $f_\delta$. Denote
\begin{equation}
\label{eq270}
\tilde{u}:=\tilde{u}_N=\lim_{\delta\to 0}u_{\delta,n_{m_j}},
\end{equation}
where 
\begin{equation}
\label{eq271}
\lim_{j\to\infty}n_{m_j} = N.
\end{equation}
>From \eqref{eq270} and the continuity of $F$, one obtains:
\begin{equation}
\label{eq272}
\|F(\tilde{u})-f_\delta\| = \lim_{j\to\infty}\|F(u_{n_{m_j}})-f_\delta\|\le \lim_{\delta\to 0}C_1\delta^\zeta = 0.
\end{equation}
Thus, $\tilde{u}$ is a solution to the equation $F(u)=f$, and \eqref{eq251} is proved.

{\it Let us prove \eqref{eq253} assuming that \eqref{eq252} holds.} 

>From \eqref{eq250} and \eqref{eq265}  with $n=n_\delta-1$, 
and from \eqref{eq269}, one gets
\begin{equation}
\label{eq273}
\begin{split}
C_1\delta^\zeta &\le M_1 \frac{a_{n_\delta-1}}{\lambda} + 
a_{n_\delta-1}\|V_{n_\delta-1}\|
\le M_1 \frac{a_{n_\delta-1}}{\lambda} + \|y\|a_{n_\delta-1}+\delta.
\end{split}
\end{equation}
If $\delta>0$ is sufficiently small, then the above equation implies
\begin{equation}
\label{eq274}
\tilde{C}\delta^\zeta \le a_{n_\delta-1} \bigg{(}\frac{M_1}{\lambda}+\|y\|\bigg{)},\quad \tilde{C}>0,
\end{equation}
where $\tilde{C}<C_1$ is a constant. 
Therefore, by \eqref{eq353}, 
\begin{equation}
\label{eq275}
\lim_{\delta\to 0} \frac{\delta}{2a_{n_\delta}}\le
\lim_{\delta\to 0} \frac{\delta}{a_{n_\delta-1}}\le
\lim_{\delta\to 0} \frac{\delta^{1-\zeta}}{\tilde{C}}\bigg{(}\frac{M_1}{\lambda}+\|y\|\bigg{)}
=0,\quad 0<\zeta<1.
\end{equation}
In particular, for $\delta=\delta_m$, one gets
\begin{equation}
\label{eq276}
\lim_{\delta_m\to 0} \frac{\delta_m}{a_{n_m}} = 0.
\end{equation}
>From the triangle inequality, inequalities \eqref{eq337} and \eqref{eq261}, one obtains
\begin{equation}
\label{eq277}
\begin{split}
\|u_{n_m} - y\| &\le \|u_{n_m} - V_{n_m}\| + 
\|V_n - V_{{n_m},0}\| + \|V_{{n_m},0}-y\|\\
&\le \frac{a_{n_m}}{\lambda} + \frac{\delta_m}{a_{n_m}} + \|V_{{n_m},0}-y\|.
\end{split}
\end{equation}
Recall that $V_{n,0}=\tilde{V}_{a_n}$ (cf. \eqref{eq186} and \eqref{eq41.2}). 
>From \eqref{eq252}, \eqref{eq276}, inequality \eqref{eq277} and Lemma~\ref{lemma16}, one obtains
\eqref{eq253}. Theorem~\ref{theorem12} is proved.
\end{proof}

\subsection{Proofs of the nonlinear inequalities}

\begin{proof}[Proof of Theorem~\ref{theorem13}]
Denote $w(t):=g(t)e^{\int_{\tau_0}^t\gamma(s)ds}$. Then inequality 
\eqref{eq278} takes the form
\begin{equation}
\label{558eq6}
\dot{w}(t) \le a(t)w^p(t) + b(t),\qquad w(\tau_0)=g(\tau_0):= g_0,
\end{equation}
where
\begin{equation}
a(t):=\alpha(t) e^{(1-p)\int_{\tau_0}^t\gamma(s)ds},
\qquad b(t):=\beta(t) e^{\int_{\tau_0}^t\gamma(s)ds}.
\end{equation}
Denote 
\begin{equation}
\label{558eq8}
\eta(t) = \frac{e^{\int_{\tau_0}^t\gamma(s)ds}}{\mu(t)}.
\end{equation}
>From inequality \eqref{eq280} and relation \eqref{558eq8} one gets
\begin{equation}
\label{558eq9}
w(\tau_0)=g(\tau_0) < \frac{1}{\mu(\tau_0)}=\eta(\tau_0).
\end{equation}
It follows from the inequalities \eqref{eq337}, \eqref{558eq6} 
and \eqref{558eq9} that
\begin{equation}
\label{558eq10}
\begin{split}
\dot{w}(\tau_0) &\le \alpha(\tau_0)\frac{1}{\mu^{p}(\tau_0)}
+ \beta(\tau_0)
\le \frac{1}{\mu(\tau_0)}\bigg{[}\gamma -
\frac{\dot{\mu}(\tau_0)}{\mu(\tau_0)}\bigg{]}
 = \frac{d}{dt} \frac{e^{\int_{\tau_0}^t\gamma(s)ds}}
{\mu(t)}\bigg{|}_{t=\tau_0} = \dot{\eta}(\tau_0).
\end{split}
\end{equation}
>From the inequalities \eqref{558eq9} and \eqref{558eq10} it follows 
that there exists $\delta>0$ such that
\begin{equation}
w(t) < \eta(t),\qquad  \tau_0\le t \le \tau_0 + \delta.
\end{equation}
To continue the proof we need two Claims.

{\it Claim 1.}  {\it If} 
\begin{equation}
w(t) \le \eta(t),\qquad \forall t \in[ \tau_0, T],\quad T>\tau_0,
\end{equation}
{\it then} 
\begin{equation}
\dot{w}(t) \le \dot{\eta}(t),\qquad  \forall t \in[ \tau_0, T].
\end{equation}
{\it Proof of Claim 1.}
 
It follows from inequalities \eqref{eq279}, \eqref{558eq6} 
and the inequlity $w(T)\leq \eta(T)$,  that
\begin{equation}
\label{558eq14}
\begin{split}
\dot{w}(t) &\le e^{(1-p)\int_{\tau_0}^t\gamma(s)ds}\alpha(t)
\frac{e^{p\int_{\tau_0}^t\gamma(s)ds}}{\mu^{p}(t)}
+ \beta(t)e^{\int_{\tau_0}^t\gamma(s)ds}\\
&\le \frac{e^{\int_{\tau_0}^t\gamma(s)ds}}{\mu(t)}
\bigg{[}\gamma -\frac{\dot{\mu}(t)}{\mu(t)}\bigg{]}\\
& = \frac{d}{dt} \frac{e^{\int_{\tau_0}^t\gamma(s)ds}}
{\mu(t)}\bigg{|}_{t=t} = \dot{\eta}(t),\qquad \forall t\in [\tau_0,T].
\end{split}
\end{equation}
{\it Claim 1} is proved. %$\hfill$ $\Box$

Denote 
\begin{equation}
\label{558eq12}
T:=\sup \{\delta \in\mathbb{R}^+: w(t) < \eta(t),\, 
\forall t \in[\tau_0, \tau_0 + \delta]\}.
\end{equation}
{\it Claim 2.}  {\it One has $T=\infty$.} 

Claim 2 says that every nonnegative solution $g(t)$ to inequality (1),
satisfying assumption (3),  is defined 
globally.

{\it Proof of Claim 2.}

Assume the contrary, i.e.,  $T<\infty$. 
>From the definition of $T$ and the continuity of $w$ and $\eta$ one 
gets
\begin{equation}
\label{558eq337}
w(T) \le \eta(T).
\end{equation}
It follows from 
inequality \eqref{558eq337} and {\it Claim 1} that 
\begin{equation}
\label{558eq3388}
\dot{w}(t)\le \dot{\eta}(t),\qquad \forall t\in [\tau_0,T].
\end{equation}
This implies 
\begin{equation}
\label{558eq3389}
w(T)-w(\tau_0) = \int_{\tau_0}^T \dot{w}(s)ds \le \int_{\tau_0}^T \dot{\eta}(s)ds 
= \eta(T)-\eta(\tau_0).
\end{equation}
Since $w(\tau_0)<\eta(\tau_0)$ by assumption \eqref{eq280}, it follows from inequality \eqref{558eq3389} that
\begin{equation}
\label{558eq20}
w(T) < \eta(T).
\end{equation}
Inequality \eqref{558eq20} and inequality \eqref{558eq3388} with $t=T$ imply that
 there exists an $\epsilon>0$ such that
\begin{equation}
w(t) < \eta(t),\qquad  T\le t \le T + \epsilon.
\end{equation}
This contradicts the definition of $T$ in \eqref{558eq12}, and the 
contradiction proves the desired conclusion $T=\infty$. 

Claim 2 is proved. %$\hfill$ $\Box$ 

It follows from the definitions of $\eta(t)$ and $w(t)$ and from 
the relation $T=\infty$ that
\begin{equation}
g(t) = e^{-\int_{\tau_0}^t\gamma(s)ds} w(t)< 
e^{-\int_{\tau_0}^t\gamma(s)ds}\eta(t) = \frac{1}{\mu(t)},\qquad \forall t> \tau_0.
\end{equation}
Theorem~\ref{theorem13} is proved.
\end{proof}

\subsubsection{Proof of Theorem~\ref{548lemma16}}

\begin{proof}[Proof of Theorem~\ref{548lemma16}] 
Let us prove \eqref{558eq5} by induction. Inequality \eqref{558eq5} holds for $n=0$ by assumption \eqref{558eq2}. 
Suppose that \eqref{558eq5} holds for all $n\le m$. From inequalities 
\eqref{558eq1}, \eqref{558eq3}, and from the induction 
hypothesis 
$g_n\le\frac{1}{\mu_n}$, $n\le m$, one gets
\begin{equation}
\begin{split}
g_{m+1}&\le g_m(1-h_m\gamma_m) + \alpha_m h_m g_m^p + h_m\beta_m\\
\le& \frac{1}{\mu_m}(1-h_m\gamma_m)+ h_m\frac{\alpha_m}{\mu_m^p}+ h_m\beta_m\\
%&+\frac{h_m(1-\theta)}{\mu_m}\bigg{(}\gamma_m -\frac{\mu_{m+1}-\mu_m}{\mu_m h_m}\bigg{)}\\
\le& \frac{1}{\mu_m}(1-h_m\gamma_m) + \frac{h_m}{\mu_m}\bigg{(}\gamma_m -
\frac{\mu_{m+1}-\mu_m}{\mu_m h_m}\bigg{)}\\
=& \frac{1}{\mu_m}-\frac{\mu_{m+1}-\mu_m}{\mu_m^2}\\
=& \frac{1}{\mu_{m+1}}- (\mu_{m+1}-\mu_m)\big{(}\frac{1}{\mu_m^2} - 
\frac{1}{\mu_m \mu_{m+1}} \big{)}\\
=& \frac{1}{\mu_{m+1}}- \frac{(\mu_{m+1}-\mu_m)^2}{\mu_n^2 \mu_{m+1}} 
\le\frac{1}{\mu_{m+1}}.
\end{split}
\end{equation}
Therefore, inequality \eqref{558eq5} holds for $n=m+1$. 
Thus, inequality \eqref{558eq5} holds for all $n\ge 0$ by induction. 
Theorem~\ref{548lemma16} is proved.
\end{proof}

\section{Applications of the nonlinear inequality \eqref{eq278}}

Here we only sketch the idea for many possible applications of this inequality
for a study of dynamical systems in a Hilbert space.

Let 
\begin{equation}
\label{eqr1}
\dot{u} = Au + h(t,u) + f(t),\qquad u(0)= u_0,\quad \dot{u}:=\frac{d u}{d t},\quad t\ge 0,
\end{equation}
where $A$ is a selfadjoint operator in a real Hilbert space,
$h(t,u)$ is a nonlinear operator in $H$, which is locally Lipschitz with 
respect to $u$
and H\"{o}lder-continuous with respect to $t\in \mathbb{R}_+:=[0,\infty)$,
and $f$ is a H\"{o}lder continuous function on $\mathbb{R}_+$ with values in $H$.

Assume that
\begin{equation}
\label{eqr2}
\Re \langle Au,u \rangle \le -\gamma(t) \langle Au,u \rangle,
\quad \Re \langle h(t,u),u \rangle \le \alpha(t)\|u\|^{1+p} \qquad 
\forall u\in D(A),
\end{equation}
where $\gamma(t)$ and $\alpha(t)$ are continuous functions on 
$\mathbb{R}_+$, $h(t,0)=0$, $p>1$ is a constant.

Our aim is to estimate the behavior of solutions to \eqref{eqr1} as $t\to\infty$,
in particular, to give sufficient conditions for a global existence of the unique solution
to \eqref{eqr1}. Our approach consists of a reduction of the problem to 
the inequality \eqref{eq278}
and an application of Theorem \ref{theorem13}.

Let $g(t):=\|u(t)\|$. Problem \eqref{eqr1} has a unique local solution under our assumptions.
Multiplying \eqref{eqr1} by $u$ from left, then from right, add, and use \eqref{eqr2} to get
\begin{equation}
\label{eqr3}
\dot{g}g\le -\gamma(t)g^2 + \alpha(t)g^{1+p} + \beta(t)g,\qquad \beta(t):=\Re \langle f(t),u\rangle.
\end{equation}
Since $g\ge 0$, one gets
\begin{equation}
\label{eqr4}
\dot{g} \le -\gamma(t)g + \alpha(t) g^p(t) + \beta(t).
\end{equation}

Now Theorem~\ref{theorem13} is applicable. This Theorem yields 
sufficient conditions \eqref{eq279} and \eqref{eq280} for the global
existence of the solution to \eqref{eqr1} and estimate \eqref{eq281} for the 
behavior of $\|u(t)\|$ as $t\to\infty$. 

The outlined scheme is widely applicable to stability problems, 
to semilinear parabolic problems, and to hyperbolic problems as well.
It yields some novel results. For instance, if the operator $A$
is a second-order elliptic operator with matrix $a_{ij}(x,t)$,
then Theorem~\ref{theorem13} allows one to treat degenerate problems,
namely, it allows, for example, the minimal eigenvalue  
$\lambda(x,t)$ of a selfadjoint matrix $a_{ij}(x,t)$ to depend
on time in such a way that $\min_{x}\lambda(x,t):=\lambda(t)\to 0$ as
$t\to \infty$ at a certain rate.

\section{A numerical experiment}

Let us present results of a numerical experiment. We solve 
nonlinear equation \eqref{eq1} with 
\begin{equation}
\label{5441eq41}
F(u):= B(u)+ \big{(}\arctan(u)\big{)}^3:=\int_0^1e^{-|x-y|}u(y)dy + \big{(}\arctan(u)\big{)}^3.
\end{equation}
Since the function $u\to \arctan^3u$ is increasing on $\mathbb{R}$, one 
has
\begin{equation}
\langle \big{(}\arctan(u)\big{)}^3 - \big{(}\arctan(v)\big{)}^3,u-v\rangle %= \int_0^1(\arctan^2(u)- \arctan^2(v))(u-v)dx 
\ge 0,\qquad \forall\, u,v\in H.
\end{equation}
Moreover, 
\begin{equation}
\label{544eq77}
e^{-|x|} = \frac{1}{\pi}\int_{-\infty}^\infty \frac{e^{i\lambda x}}{1+\lambda^2} d\lambda.
\end{equation}
Therefore, $\langle B(u-v),u-v\rangle\ge0$, so 
\begin{equation}
\langle F(u-v),u-v\rangle\ge0,\qquad \forall\, u,v\in H.
\end{equation}

The Fr\'{e}chet derivative of $F$ is:
\begin{equation}
\label{544eq44}
F'(u)w = \frac{3\big{(}\arctan(u)\big{)}^2}{1+u^2}w + 
\int_0^1 e^{-|x-y|}w(y) dy.
\end{equation}
If $u(x)$ vanishes on a set of positive Lebesgue's measure, then $F'(u)$
is not boundedly invertible in $H$.
If $u\in C[0,1]$ vanishes even at one point $x_0$, then $F'(u)$ 
is not boundedly invertible in $H$.

We use the following iterative scheme 
\begin{equation}
\label{544eq11-51}
\begin{split}
u_{n+1} &= u_n - (F'(u_n)+a_n I)^{-1}(F(u_n)+a_nu_n - f_\delta),\\
u_0 &= 0,
\end{split}
\end{equation}
and stop iterations at $n:=n_\delta$ such that the following inequality holds
\begin{equation}
\label{544eq53}
\|F(u_{n_\delta}) - f_\delta\| <C \delta^\gamma,\quad
\|F(u_{n}) - f_\delta\|\ge C\delta^\gamma,\quad n<n_\delta ,\quad C>1,\quad \gamma \in(0,1).
\end{equation}
The existence of the stopping time $n_\delta$ is proved 
and the choice $u_0=0$ is also justified in this paper.
The drawback of the iterative scheme \eqref{544eq11-51} compared to the DSM 
in this paper is that
the solution $u_{n_\delta}$ may converge not to the minimal-norm
 solution to equation \eqref{eq1} but to another solution to this 
equation, if this equation has many solutions. 
%However, using this iterative scheme one does not have to worry about 
%roundoff errors and stability 
%as in using numerical methods for solving the Cauchy problem 
%\eqref{eq43} to get $u_\delta(t_\delta)$.
%The iterative scheme \eqref{544eq11-51} can be considered formally as the 
%explicit Euler's method
%with the stepsize $h=1$ (see, e.g., \cite{Hairer}).
There might be  other iterative schemes which are more efficient 
than scheme \eqref{544eq11-51}, but this scheme is simple 
and easy to implement.
% simplest method but it is 
%not one of the most efficient methods for solving ODEs.
 
Integrals of the form 
$\int_0^1 e^{-|x-y|}h(y)dy$ in \eqref{5441eq41} and \eqref{544eq44} are computed by 
using
the trapezoidal rule. The noisy function used in the test is
$$
f_\delta(x) = f(x) + \kappa f_{noise}(x),\quad \kappa>0.
$$
The noise level $\delta$ and the relative noise level are defined by 
the formulas:
$$
\delta = \kappa\| f_{noise}\|,\quad \delta_{rel}:=\frac{\delta}{\|f\|}.
$$
In the test $\kappa$ is computed in such a way that the relative noise level
$\delta_{rel}$ equals to some desired value, i.e.,
$$
\kappa = \frac{\delta}{\| f_{noise}\|}=\frac{\delta_{rel}\|f\|}{\| f_{noise}\|}.
$$
We have used the relative noise level as an input parameter in the test.

In the test we took $h=1$, $C =1.01$, and $\gamma = 0.99$.
The exact solution in the first test is $u(x)=1$, and the right-hand side
is $f=F(1)$.
 
It is proved that one can take 
$a_n=\frac{d}{1+n}$, and $d$ is
sufficiently large. However, in practice, if we choose $d$ too large,
then the method will use too many iterations before reaching the stopping
time $n_\delta$ in \eqref{544eq53}. This means that the computation time will
be large in this case. Since 
$$ 
\|F(V_\delta) - f_\delta\| = a(t)\|V_\delta\|, 
$$ 
and $\|V_\delta(t_\delta) - u_\delta(t_\delta)\|=O(a(t_\delta))$, 
we have 
$$
C\delta^\gamma =\|F(u_\delta(t_\delta)) - f_\delta\|\leq
a(t_\delta)\|V_\delta\|+ O(a(t_\delta)), 
$$
and we choose 
$$ 
d = C_0\delta^\gamma,\qquad C_0>0. 
$$ 
In the experiments our method works well with 
$C_0\in[3,10]$. In the
test we chose $a_n$ by the formula $a_n := C_0\frac{\delta^{0.99}}{n+1}$.
The number of nodal points, used in
computing integrals in \eqref{5441eq41} and \eqref{544eq44}, was $N = 50$. 
The accuracy of the solutions obtained
in the tests with $N=20$ and $N=30$ was
about the same as for $N = 50$.

Numerical results for various values of $\delta_{rel}$ are presented in Table~\ref{544table1}. 
In this experiment,
the noise function $f_{noise}$ is a vector with random entries normally 
distributed, with 
mean value 0 and variance 1. 
Table~\ref{544table1} shows that the iterative scheme yields good numerical results. 
\begin{table}[ht] 
\caption{Results when $C_0=4$ and $N=50$.}
\label{544table1}
\centering
\small
\begin{tabular}{|@{  }c@{\hspace{2mm}}
@{\hspace{2mm}}|c@{\hspace{2mm}}|c@{\hspace{2mm}}|c@{\hspace{2mm}}|c@{\hspace{2mm}}|
c@{\hspace{2mm}}|c@{\hspace{2mm}}|c@{\hspace{2mm}}r@{\hspace{2mm}}l@{}} 
\hline
$\delta_{rel}$       &0.05   &0.03   &0.02    &0.01    &0.003 &0.001\\
\hline
Number of iterations &28	 &29     &28     &29   &29    &29\\
\hline 
$\frac{\|u_{DSM} - u_{exact}\|}{\|u_{exact}\|}$&0.0770	 &0.0411   &0.0314    &0.0146   &0.0046    &0.0015\\
\hline
\end{tabular}
\end{table}

%\section{Bibliographical comments}


\begin{thebibliography}{99}

\bibitem{R401} Airapetyan, R. and Ramm, A. G.,
Dynamical systems and discrete methods for solving nonlinear ill-posed
problems,
Appl.Math.Reviews, 1, World Sci. Publishers, 2000, pp.491-536.



\bibitem{A} Alber, Y., Ryazantseva, I., Nonlinear ill-posed problems 
of monotone type,
Springer, Berlin, 2006. 

\bibitem{G} Gavurin, M., Nonlinear functional equations and continuous 
analogs of iterative methods, Izvestiya VUZ'ov Mathematika, 5, (1958), 
18-31. (in Russian) 

\bibitem{R538} Hoang, N.S. and Ramm, A. G., A nonlinear inequality. Jour. Math. Ineq., 2, N4, (2008),
459-464.

\bibitem{R539} Hoang, N.S. and Ramm, A. G., An iterative scheme for solving equations with monotone
operators. BIT, 48, N4, (2008), 725-741.

\bibitem{R544} Hoang, N.S. and Ramm, A. G., Dynamical systems method for 
solving nonlinear equations with monotone operators. (submitted)

\bibitem{R549} Hoang, N.S. and Ramm, A. G., Dynamical systems Gradient method 
for solving nonlinear
equations with monotone operators. Acta Appl. Math., (to appear)

\bibitem{R550} Hoang, N.S. and Ramm, A. G., A new version of the 
Dynamical Systems Method (DSM)
for solving nonlinear quations with monotone operators, Diff. Eq. Appl., 
1, N1, (2009), 1-25.

\bibitem{R554} Hoang, N.S. and Ramm, A. G., A discrepancy principle for 
equations with monotone continuous operators. Nonlinear Analysis Series A: 
Theory, Methods \& Applications. (to appear)

\bibitem{R558} Hoang, N.S. and Ramm, A. G., A nonlinear inequality and 
applications. 
Nonlinear Analysis Series A: Theory, Methods \& Applications. (to appear)


\bibitem{M}
V. A. Morozov, Methods of solving incorrectly posed problems, Springer Verlag, 
New York, 1984.

\bibitem{R470} Ramm, A. G., Inverse problems, Springer, New York, 2005.

\bibitem{R499} Ramm, A. G., Dynamical systems method for solving
operator equations, Elsevier, Amsterdam, 2007.

%\bibitem{R401} Airapetyan, R. and Ramm, A. G.,
%Dynamical systems and discrete methods for solving nonlinear ill-posed 
%problems, 
%Appl.Math.Reviews, 1, World Sci. Publishers, 2000, pp.491-536.

\bibitem{R449} Ramm, A. G.,
On the discrepancy principle, Nonlinear Functional Anal. 
and Applic., 8, N2, (2003), 307-312.

\bibitem{R452} Ramm, A. G.,
Global convergence for ill-posed equations with monotone operators: 
the dynamical systems method, J. Phys
A, 36, (2003), L249-L254.

\bibitem{R454} Ramm, A. G.,
Dynamical systems method for solving nonlinear operator equations, 
International Jour. of Applied Math.
Sci., 1, N1, (2004), 97-110.

\bibitem{R456} Ramm, A. G.,
Dynamical systems method for solving operator equations, 
Communic. in Nonlinear Sci. and Numer. Simulation,
9, N2, (2004), 383-402.

\bibitem{R466} Ramm, A. G.,
Dynamical systems method and surjectivity of nonlinear maps, Communic. in Nonlinear Sci. and Numer.
Simulation, 10, N8, (2005),931-934.
\bibitem{R469} Ramm, A. G.,
DSM for ill-posed equations with monotone operators, Comm. in Nonlinear Sci. and Numer. Simulation, 10,
N8, (2005),935-940.
\bibitem{R474} Ramm, A. G.,
Discrepancy principle for the dynamical systems method, Communic. in Nonlinear Sci. and Numer. Simulation,
10, N1, (2005), 95-101
\bibitem{R480} Ramm, A. G.,
A new discrepancy principle, J. Math. Anal. Appl., 310, (2005), 342-345.
\bibitem{R485} Ramm, A. G.,
Dynamical systems method (DSM) and nonlinear problems, Spectral Theory and Nonlinear
Analysis, World Scientific Publishers, Singapore, 2005, 201-228.
\bibitem{R488} Ramm, A. G.,
Dynamical systems method for nonlinear equations in Banach spaces, Communic. in Nonlinear Sci. and
Numer. Simulation, 11, N3, (2006), 306-310.
\bibitem{R489} Ramm, A. G.,
Dynamical systems method and a homeomorphism theorem, Amer. Math. Monthly, 113, N10, (2006), 928-933.
\bibitem{R491} Ramm, A. G., 
Dynamical systems method (DSM) for unbounded operators, Proc.Amer. Math. Soc., 134, N4, (2006),
1059-1063.
\bibitem{R500} Ramm, A. G.,
Ill-posed problems with unbounded operators, Journ. Math. Anal. Appl., 325, (2007), 490-495.
\bibitem{R501} Ramm, A. G.,
Dynamical systems method (DSM) for selfadjoint operators, Jour. Math. Anal. Appl., 328, (2007), 1290-1296.
\bibitem{R504} Ramm, A. G.,
Iterative solution of linear equations with unbounded operators, Jour. Math. Anal. Appl., 330, N2, (2007),
1338-1346.
\bibitem{R522} Ramm, A. G.,
On unbounded operators and applications, Appl. Math. Lett., 21, (2008), 377-382.
\bibitem{R525} Ramm, A. G.,
Discrepancy principle for DSM II, Comm. Nonlin. Sci. and Numer. Simulation, 13, (2008), 1256-1263.
\bibitem{R529} Ramm, A. G.,
Dynamical systems method (DSM) for general nonlinear equations, Nonlinear Analysis: Theory, Methods and
Appl., 69, N7, (2008), 1934-1940.

\bibitem{R118} Ramm, A. G., Theory and applications of some new classes of 
integral equations, Springer-Verlag, New York, 1980.

\bibitem{T} 
Tautenhahn, U., On the method of Lavrentiev regularization for 
nonlinear 
ill-posed problems, Inverse Probl., 18, (2002), 191-207.


\bibitem{TLY} Tikhonov, A., Leonov, A.,Yagola, A., Nonlinear ill-posed 
problems, Chapman and Hall, London, 1998.

\bibitem{VA} Vasin, V., Ageev, A., Ill-posed problems with a priori information, VSP, Utrecht, 1995.

\end{thebibliography}
\end{document}